\def\@tocline#1#2#3#4#5#6#7{\relax
  \ifnum #1>\c@tocdepth % then omit
  \else
    \par \addpenalty\@secpenalty\addvspace{#2}%
    \begingroup \hyphenpenalty\@M
    \@ifempty{#4}{%
      \@tempdima\csname r@tocindent\number#1\endcsname\relax
    }{%
      \@tempdima#4\relax
    }%
    \parindent\z@ \leftskip#3\relax \advance\leftskip\@tempdima\relax
    \rightskip\@pnumwidth plus4em \parfillskip-\@pnumwidth
    #5\leavevmode\hskip-\@tempdima
      \ifcase #1
       \or\or \hskip 1em \or \hskip 2em \else \hskip 3em \fi%
      #6\nobreak\relax
    \hfill
    \hbox to\@pnumwidth{\@tocpagenum{#7}}\par
    \nobreak
    \endgroup
  \fi}
\DeclareSymbolFont{cyrletters}{OT2}{wncyr}{m}{n}
\DeclareMathSymbol{\Sha}{\mathalpha}{cyrletters}{"58}
\theoremstyle{plain}
\newtheorem{theorem}[subsubsection]{Theorem}
\newtheorem{proposition}[subsubsection]{Proposition}
\newtheorem{lemma}[subsubsection]{Lemma}
\newtheorem{corollary}[subsubsection]{Corollary}
\newtheorem*{claim*}{Claim}
\newtheorem{conjecture}[subsubsection]{Conjecture}
\theoremstyle{definition}
\newtheorem{definition}[subsubsection]{Definition}
\theoremstyle{remark}
\newtheorem{remark}[subsubsection]{Remark}
\theoremstyle{example}
\newtheorem{example}[subsubsection]{Example}
\numberwithin{equation}{subsubsection}
\def\frakp{{\mathfrak p}}
\def\frakh{{\mathfrak h}}
\def\fraku{{\mathfrak u}}
\def\frakg{{\mathfrak g}}
\def\frakm{{\mathfrak m}}
\def\frakb{{\mathfrak b}}
\def\fraka{{\mathfrak a}}
\def\id{\operatorname{id}} 
\def\rad{\operatorname{rad}}
\def\Spec{\operatorname{Spec}}
\def\diag{\operatorname{diag}}  
\def\Int{\operatorname{Int}}
\def\Ad{\operatorname{Ad}} 
\def\Lie{\operatorname{Lie}}
\def\im{\operatorname{im}} 
\def\ker{\operatorname{ker}} 
\def\pr{\operatorname{pr}}
\def\GL{\operatorname{GL}}
\def\RZ{\mathrm{RZ}}
\newcommand{\iso}{\xrightarrow{\sim}}
\newcommand{\q}{{\mathbb Q}}
\newcommand{\Z}{\mathbb Z}
\newcommand{\z}{\mathbb Z}
\newcommand{\R}{\mathbb R}
\newcommand{\C}{\mathbb C}
\newcommand{\co}{\mathcal O}
\newcommand{\calRZ}{{\mathcal {RZ}}}
\def\B{\mathcal B}\def\C{{\mathcal C}}\def\D{\mathcal D}
\def\O{\mathcal O}\def\P{\mathcal P}
\newcommand{\Gm}{{\mathbb{G}_m}}
\newcommand{\g}{{\mathbb{G}}}
\newcommand{\Db}{{\mathbb D}}
\def\a{\mathbb A}
\def\f{\mathbb F}\def\g{\mathbb G}
\def\q{\mathbb Q}\def\z{\mathbb Z}
\newcommand\ANilp{{\mathrm {ANilp}}}
\newcommand{\quash}[1]{}  %%Anything in \quash is ignored
\begin{document}
\author[O. B\"ultel]{O. B\"ultel}
 \address{O. B\"ultel: Aldegreverstrasse 28\\ 45147 Essen\\ Germany }
\author[G. Pappas]{G. Pappas}
 \thanks{G.P. is partially supported by NSF grants   DMS-1360733 and DMS-1701619.}

\address{G. Pappas: Dept.~of Mathematics\\ Michigan State University\\ E. Lansing\\ MI 48824\\ USA}

\title[$(G,\mu)$-displays and Rapoport-Zink spaces]{$(G,\mu)$-displays and Rapoport-Zink spaces}
\date{\today}

\begin{abstract}

Let $(G, \mu)$  be a pair of a reductive group $G$ over the $p$-adic integers and a minuscule cocharacter $\mu$ of $G$ defined over an unramified extension.
We  introduce and study ``$(G, \mu)$-displays'' which generalize Zink's Witt vector displays. We use these to define certain Rapoport-Zink formal schemes purely group theoretically, i.e. without  $p$-divisible groups.   

\end{abstract}

\maketitle

\tableofcontents

\vfill\eject

\section{Introduction}
In the theory of Shimura varieties, as interpreted by Deligne \cite{DeligneCorvallis}, one starts with a ``Shimura datum''
$(G, X)$.
This is a pair  
of a (connected) reductive algebraic group $G$ over the field of rational numbers $\q$, and a symmetric Hermitian domain $X=\{h\}$
for $G(\mathbb R)$ given as a $G(\mathbb R)$-conjugacy class of an algebraic group homomorphism
$h: {\mathbb C}^*\to G_{\R}$ over the real numbers $\R$, that satisfies certain axioms.  For each open compact subgroup $K$ of the finite adeles $G({\mathbb A}^f)$, one considers the Shimura variety
${\rm Sh}_K(G, X)=G(\q)\backslash (X\times G({\mathbb A}^f)/K)$; this complex analytic space 
is actually an algebraic variety with a canonical model over a number field, the so-called reflex field of
the pair $(G, X)$. 

The proposal that there should exist a similar theory of ``$p$-adic local Shimura varieties'' was recently put forward by Rapoport and Viehmann
\cite{RapoportVi}. The current paper can be viewed as a contribution to this  theory. In \cite{RapoportVi}, one starts with the ``local Shimura datum''. This is  
a triple $(G,  \{\mu\}, [b])$ consisting of a connected reductive algebraic group $G$ over 
$\q_p$,  a conjugacy class 
$\{\mu\}$ of minuscule cocharacters of $G_{\overline\q_p}$, and a $\sigma$-conjugacy class $[b]$ of elements in $G(L)$, satisfying some simple axioms. (Here, $L$ is the completion 
of the maximal unramified extension of $\q_p$  and $\sigma$ the canonical lift of Frobenius).
%We will often simply write $(G,\mu, b)$, omitting all brackets.  
For each 
open compact subgroup $K\subset G(\q_p)$ we should have the local Shimura variety ${\rm M}_K(G,  \{\mu\} , [b])$;
this is expected to be a rigid analytic space   with a canonical model over the ``local reflex field'' which is a finite extension $E$ of $\q_p$ that depends only on $(G, \{\mu\})$. See \cite{RapoportVi} for more details and expected properties of the local Shimura varieties. 
Examples of such local Shimura varieties have first been constructed  in the work of Rapoport and Zink \cite{RapZinkBook}
in some special cases.
There, they appear as covers of the generic fibers of certain formal schemes over the ring of integers $\co_E$. These formal schemes 
(which we call Rapoport-Zink formal schemes) are moduli spaces parametrizing 
 $p$-divisible groups (with additional structure) with a quasi-isogeny to a fixed $p$-divisible 
 group. They can be viewed as integral models of the desired local Shimura varieties.
 
In this paper, we consider the case in which  the local Shimura datum is {\sl unramified}.
In particular, the open compact subgroup $K$
is maximal hyperspecial. Then, starting from $(G,  \{\mu\}, [b])$, we give a functor 
${\mathrm {RZ}}_{G, \mu, b}$ on $p$-nilpotent algebras.
As we will explain below, this functor has a direct group theoretic definition which uses only $G$, suitable representatives $\mu$ and $b$,
and involves rings of Witt vectors.
 We conjecture that ${\mathrm {RZ}}_{G, \mu, b}$ is represented by a formal scheme which should then be
an integral model of the sought-after local Shimura variety for this hyperspecial level. We essentially show this conjecture when 
the local Shimura datum is of {\sl Hodge type}, i.e. when it  embedds  into a local Shimura datum for ${\rm GL}_n$.

The main tool we use is a variation of the theory of Zink displays. By the work of Zink and Lau, (formal) $p$-divisible groups over a $p$-adically complete and separated algebra $R$ are classified by displays. These are projective finitely generated modules $P$ over the ring of Witt vectors $W(R)$ with additional structure given by a suitable filtration $I(R)P\subset Q\subset P$ and a Frobenius semi-linear operator $V^{-1}: Q\to P$ that satisfy certain axioms. Here, $I(R)$ is the kernel of the  projection $w_0: W(R)\to R$ given by $w_0(r_0, r_1, \ldots )=r_0$.

In this paper, we develop a theory of  displays ``with $(G, \mu)$-structure'': Instead of projective $W(R)$-modules, we use $G$-torsors over $W(R)$ or, equivalently, $L^+G$-torsors over $R$. 
Here, $G$ is a reductive group scheme over $\Z_p$ and $\mu$ is a (minuscule) cocharacter
of $G$ defined over a finite unramified extension $W(k_0)$.
Then $L^+G$ is the positive ``Witt loop group scheme of $G$'' defined by $L^+G(R)=G(W(R))$. To explain the definition of $(G,\mu)$-displays we need
to introduce some more objects: We let $H^\mu$  be the subgroup scheme of $L^+G_{W(k_0)}$ with $R$-valued points  $H^\mu(R)$ given 
by those $g\in G(W(R))$ whose projection $g_0\in G(R)$ lands in the $R$-points of the parabolic subgroup $P_\mu\subset G$ associated to $\mu$. We then construct the ``divided Frobenius'' which 
is a group scheme homomorphism
\[
\Phi_{G, \mu}: H^\mu \to L^+G_{W(k_0)}
\]
such that, for $h\in H^\mu(R)$,   we have
\[
\Phi_{G, \mu}(h)=\mu^\sigma(p)\cdot F(h)\cdot \mu^{\sigma}(p)^{-1}
 \]
 in $G(W(R)[1/p])$. Here $F$ is induced by the Frobenius on $W(R)$.

\begin{definition} A $(G, \mu)$-display is a triple $\D:=(P, Q, u)$ which we can write
\[
P\hookleftarrow Q \xrightarrow{u} P,
\]
where $P$ is a $L^+G$-torsor and $Q$ a $H^\mu$-torsor. We ask that  \[
P=Q\times_{H^\mu}L^+G_{W(k_0)}\] and that $u$ is a morphism compatible with the actions
on $Q$ and $P$ and with $\Phi_{G, \mu}$, in the sense that $u(q\cdot h)=u(q)\cdot \Phi_{G, \mu}(h)$, for all $q\in Q(R)$, $h\in H^\mu(R)$.
\end{definition} 

(This definition  first appeared in \cite{E7}, see also \cite{ZhangCao} for a similar construction. One can view this structure as formally similar to that of a Drinfeld shtuka with $W(R)$ replacing the affine ring $R[t]$ of a curve.)

Locally the morphism $u$ is given by
a point of $L^+G$ and we can see that $(G, \mu)$-displays are objects of the quotient stack
\[
[L^+G_{W(k_0)}/_{\Phi_{G, \mu}} H^\mu]
\]
where the action of $H^\mu$ is by $\Phi_{G, \mu}$-conjugation: $g\cdot h:= h^{-1}g\Phi_{G, \mu}(h)$. 

Apparently, the notion of a $(G,\mu)$-display is sufficiently well-behaved and we can generalize several of the results of Zink on Witt vector displays, for example, about deformation theory.
We can also define a notion of a $G$-quasi-isogeny between two $(G, \mu)$-displays and show that a triple 
$(G, \mu, b)$ allows us to give a ``base-point'' $(G, \mu)$-display $\D_0$ defined over $k=\bar k_0$. 
With these ingredients, we now describe the ``Rapoport-Zink functor'' ${\mathrm {RZ}}_{G, \mu, b}$: 

By definition, ${\mathrm {RZ}}_{G, \mu, b}$  sends a $p$-nilpotent  $W(k)$-algebra $R$ to the set of isomorphism classes of pairs
$(\D, \rho)$, where $\D$ is a $(G,\mu)$-display over $R$, and $\rho:  \D\times_{R}R/pR \dashrightarrow  \D_0\times_k R/pR$ is 
a $G$-quasi-isogeny.

  We can also view ${\mathrm {RZ}}_{G, \mu, b}$ as given by a quotient stack. Set $LG$ for the Witt loop group scheme of $G$ given by $LG(R)=G(W(R)[1/p])$. Then
$\RZ_{G,\mu, b}$ is given by the isomorphism classes of objects of the
  (fpqc, or \'etale) quotient stack 
  \[
{\mathcal {RZ}}_{G, \mu, b}:= [ (L^+G\times_{LG,\mu, b} LG)/ H^\mu]
  \]
for the action of $H^\mu$ given by
\[
(U, g)\cdot h=(h^{-1}\cdot U\cdot  \Phi_{G,\mu}(h), g\cdot h).
\]
Here, the fiber product $(L^+G\times_{LG,\mu, b} LG)(R)$ is  by definition the set of pairs $(U, g)$
with $U\in L^+G(R)$, $g\in LG(R)$, such that 
\[
g^{-1}bF(g)=U\mu^\sigma(p)
\]
 in $LG(R)$. The $\sigma$-centralizer group 
 \[
 J_b(\q_p)=\{j\in G(L)\ |\ j^{-1}b\sigma(j)=b\}
 \]
 acts on ${\mathcal {RZ}}_{G, \mu, b}$ by 
 \[
 j\cdot (U,g)=(U, j\cdot g).
 \]
 
It follows from the definition that the $k$-valued points of ${\mathrm {RZ}}_{G, \mu, b}$
are given by the affine Deligne-Lusztig set
\[
{\mathrm {RZ}}_{G, \mu, b}(k)= \{g\in G(L)\, |\, g^{-1}b\sigma(g)\in G(W)\mu^\sigma(p)G(W)\}/G(W).
\]
Here, $W=W(k)$, $L=W(k)[1/p]$.

Assuming an additional mild condition on the slopes of $b$, we conjecture that the functor ${\mathrm {RZ}}_{G, b, \mu}$ is representable by a formal $W$-scheme
which is formally locally of finite type and $W$-formally smooth. For $G=\GL_n$, and when 
$b$ has no zero slopes, this follows from the results of Rapoport-Zink and Zink and Lau.
Indeed, in that case, the results of Zink and Lau imply that our functor is equivalent to the functor of isomorphism classes of 
(formal) $p$-divisible groups with a quasi-isogeny to a fixed $p$-divisible group
considered in \cite{RapZinkBook}; the representability of that functor is one of the main 
results of loc. cit. For more general $(G,\mu)$, we show that pairs $(\D, \rho)$ over $R$
have no automorphisms when $R$ is Noetherian. 

When $(G,\mu)$ is of Hodge type,
i.e. when there is an embedding $i: G\hookrightarrow \GL_n$ with $i\cdot\mu$ conjugate to one of the standard minuscule cocharacters of $\GL_n$, we show that the restriction of the functor 
${\mathrm {RZ}}_{G, \mu, b}$ to Noetherian algebras is representable as desired. 
This is one of the main results of the paper. 
The basic idea of the proof  is as follows: We  show (Corollary \ref{closedRZ}) that when $(G, \mu)\hookrightarrow (G',\mu')$ is an embedding of local Shimura data, the corresponding 
morphism of  stacks ${\mathcal {RZ}}_{G, \mu, b}\to  {\mathcal {RZ}}_{G',  \mu', i(b)} $,  when restricted to Noetherian algebras, is relatively representable by a closed immersion. (Proving this is, maybe surprisingly, involved; the main step is the descent result of Proposition \ref{analog}. Part of the difficulty comes from the need to handle the nilradical of various rings that appear in the argument.) 
In the Hodge type case, we have an embedding 
$(G, \mu)\hookrightarrow (\GL_n,\mu')$ and ${\mathcal {RZ}}_{\GL_n, \mu', i(b)} $
is representable by the results of Zink, Lau, and Rapoport-Zink as explained above.
The result follows by combining these two statements. (In all of this, we have to assume 
that the slopes of $i(b)$ do not include $0$.) 

A construction of Rapoport-Zink formal schemes in the Hodge type case was also given by Kim \cite{KImRZ},
and, under an additional condition,  independently by Howard and the second author \cite{HP2}.
It is easy to see that,
in the Hodge type case,   the formal scheme representing ${\mathrm {RZ}}_{G, \mu, b}$ given in this paper
coincides with the corresponding formal schemes constructed in \cite{RapZinkBook}
(in the PEL and EL cases), in \cite{KImRZ}, and in \cite{HP2}. Hence, our results
 give a unified group theoretic description of these formal schemes as moduli functors
 and describe their $R$-valued points for all Noetherian algebras $R$. 
 In particular, they imply the existence of isomorphisms between ``classical'' Rapoport-Zink spaces
 when the corresponding local Shimura data are isomorphic (for example, because 
 of exceptional isomorphisms between the underlying groups); this answers a question of Rapoport.  
 
 Let us also mention here that the restriction of our functors to perfect $k$-algebras
 has already appeared in the work of Zhu \cite{ZhuWitt} (see also  \cite{BhattScholzeWitt}). In fact, when we consider functors with values 
 in perfect algebras there is a more comprehensive theory that employs the ``Witt vector affine Grassmannian''
 which does not require the assumption that $\mu$ is minuscule. However, the techniques of \cite{ZhuWitt} and \cite{BhattScholzeWitt}
 cannot handle $p$-nilpotent algebras and only give information about the perfection of the special fiber.
 On the other extreme, when one considers only the generic fibers, in the Hodge type case, 
 Scholze and Scholze-Weinstein \cite{ScholzeWeinsteinModuli} can give a construction 
 of the inverse limit of the tower of local Shimura varieties 
 as a perfectoid space (see also \cite{CaraianiScholze}). There are also related constructions
 of more general spaces (even for $\mu$ not minuscule) that use  $G$-bundles 
 on the Fontaine-Fargues curve and Scholze's 
 theory of diamonds (\cite{ScholzePadic}, \cite{Fargues}). 
 Again, the more classical integral theory in this paper is 
 in a different direction. Nevertheless, it would be interesting to directly compare 
these constructions with ours. 
In another direction, it should also be possible to develop
 a theory of ``relative'' Rapoport-Zink spaces by combining our group theoretic constructions 
 with the theory of relative displays of T. Ahsendorf (see for example \cite{ACZink}).
Then one can compare these with the ``absolute'' Rapoport-Zink spaces of the current paper when the group is given by Weil restriction of scalars (see \cite{RZDrinfeld} for an example of such a comparison).  
 
 We will now briefly describe the contents of the paper: We start with preliminaries
 on Witt vectors, various notions of ``Witt loop schemes'' (these are variations of the Greenberg transform), and a review of the main definitions of Zink's theory of displays.
In \S 3, we define the group theoretic displays ($(G, \mu)$-displays). We discuss several of their
basic properties, study their deformation theory and define a notion of quasi-isogeny.
In \S 4, we give our  group theoretic  definition of the Rapoport-Zink stacks
and state the representability conjecture. 
In \S 5, we show representability for local Shimura data 
of Hodge type over Noetherian rings. 
At the end of the paper, we include three short appendices: The first
reviews certain facts about parabolic subgroups of reductive groups and the second 
discusses torsors for  Witt loop group schemes. Finally, the third appendix  gives some
results on nilradicals of certain rings which are used in the proof of the main representability theorem.

 \smallskip

\noindent{\it Acknowledgment.} We thank M. Hadi Hedayatzadeh, B. Howard, R. Noot, and M. Rapoport for 
useful discussions and suggestions, and the referee for his/her careful reading of the paper.
\bigskip

\section{Preliminaries}\label{sec:pre}

Let $p$ be a prime number. Denote by $k$ an algebraic closure of ${\mathbb F}_p=\Z/p\Z$. Set $W=W(k)$ for the ring of Witt vectors and $K=W[1/p]$ for its fraction field. Denote by $\bar K$ an algebraic closure of $K$. We will use the symbol $k_0$ to denote a finite field of cardinality $q=p^f$ contained in $k$. 

If $\O$ is a $\Z_p$-algebra, we will denote by $\ANilp_\O$
the category of $\O$-algebras in which $p$ is nipotent.
Similarly, we let ${\rm Nilp}_{\O}$ be the category of 
$\Spec(\O)$-schemes $S$ which are such that $p$ is Zariski locally nilpotent on $\O_S$.

In most of the paper, $G$ stands for  a connected 
reductive group scheme  over $ \Z_p $. Its  generic fiber 
is a connected reductive group  over $\q_p$,  and  is unramified, \emph{i.e.}~quasi-split and split over an unramified extension of $\q_p$. 
Conversely, every unramified  connected reductive group   over $\q_p$ is isomorphic to the generic fiber of such a $G$.

\subsection{Witt vectors}

If $R$ is a commutative ring which is a $\Z_p$-algebra, we will denote by $W_n(R)$ the
ring of $p$-typical Witt vectors $(r_0, \ldots , r_{n-1})$ of length $n$ with entries in $R$. We allow in the notation $n=\infty$;
in this case, we simply denote by $W(R)$ the ring of Witt vectors with entries in $R$. For $r\in R$, we set
\[
[r]=(r,0,0,\ldots ).
\]
The ring structure on $W_n(R)$ is functorial in $R$.
Recall the ring homomorphisms (``ghost coordinates'')
\[
w_k: W_n(R)\to R\ ;\ (r_0, r_1,\ldots , r_{n-1})\mapsto r_0^{p^k}+pr_1^{p^{k-1}}+\cdots +p^kr_k.
\]
We denote by $I_n(R)$, or simply $I(R)$ if $n=\infty$, the kernel of $w_0: W_n(R)\to R$.
The Frobenius $F_n$ and Verschiebung $V_n$ are maps $ W_{n+1}(R)\to W_n(R)$, resp.
$W_n(R)\to W_{n+1}(R)$, that satisfy the defining relations
\[
w_k(F_nx)=w_{k+1}(x),\quad w_k(V_nx)=pw_{k-1}(x), \ w_0(V_nx)=0.
\]
The Frobenius $F_n$ is a ring homomorphism.
 The Verschiebung $V_n$ is additive, is given by
 \[
 V_n(r_0,r_1,\ldots , r_{n-1})=(0, r_0, r_1,\ldots ,r_{n-1}),
 \]
 and we  have $V_nW_n(R)=I_{n+1}(R)$. Again, we usually omit the subscript $n$ if $n=\infty$.
 We have the identities
 \[
 F\circ V=p,\qquad V(Fx\cdot y)=x\cdot V(y).
 \]

The following will be used later in the paper. We will denote by $J(R)$ the Jacobson radical of $R$.
\begin{lemma}
\label{jaclemma02}
Let $\mathcal A\subset W_n(R)$ be an ideal, and suppose that one of the following two assertions holds:
\begin{itemize}
\item[(i)]
$p\in J(R)$ and $Rw_0(\mathcal A)=R$.
\item[(ii)]
$n<\infty$  and $Rw_k(\mathcal A)=R$ holds for all $0\leq k<n$.
\end{itemize}
Then  $\mathcal A=W_n(R)$.
\end{lemma}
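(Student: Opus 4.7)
The plan is to handle cases (i) and (ii) separately, each by induction, leaning on the explicit multiplication rule
\[
V^n[r]\cdot y \;=\; V^n\bigl(r\cdot w_n(y)\bigr)\qquad\text{for } y\in W_{n+1}(R),\ r\in R,
\]
which follows from the identity $V^n(x)\cdot y=V^n(x\cdot F^n(y))$ together with the identification $F^n(y)=w_n(y)$ under $W_1(R)=R$ via $w_0$. A useful special case is $V^n[r]\cdot V^n[s]=V^n[p^n rs]$, obtained via $w_n(V^n[s])=p^n s$.

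For case (ii), I would induct on $n$; the base $n=1$ is immediate since $W_1(R)=R$ and $w_0$ is the identity. For the inductive step, let $\pi\colon W_{n+1}(R)\twoheadrightarrow W_n(R)$ be truncation. Since $w_k$ on $W_{n+1}(R)$ only depends on the first $k+1$ coordinates, $w_k\circ\pi=w_k$ for $k<n$; hence $\pi(\mathcal A)$ is an ideal of $W_n(R)$ satisfying the hypothesis at levels $0,\dots,n-1$, so by induction $\pi(\mathcal A)=W_n(R)$. This yields $\mathcal A+\ker\pi=W_{n+1}(R)$, and it remains to prove $\ker\pi=V^n[R]\subset\mathcal A$. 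The multiplication rule gives $V^n[r\cdot w_n(a)]=V^n[r]\cdot a\in\mathcal A$ for every $a\in\mathcal A$ and $r\in R$; by additivity of $V^n$ together with the hypothesis $R\,w_n(\mathcal A)=R$, we then deduce $V^n[R]\subset\mathcal A$, completing the induction.

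For case (i), the hypothesis controls only the zeroth ghost, so I would compensate with $p\in J(R)$. Using $Rw_0(\mathcal A)=R$ and Teichm\"uller multipliers, I would first produce $a\in\mathcal A$ with $w_0(a)=1$; showing $a\in W(R)^\times$ then yields $1=a^{-1}a\in\mathcal A$. Via $W(R)=\varprojlim_n W_n(R)$, it suffices to invert the image of $a$ in each $W_n(R)$, which I do by induction on $n$. After lifting the inverse from $W_n(R)$ to $W_{n+1}(R)$, the inductive step reduces to showing that $1+V^n[r]\in W_{n+1}(R)$ is a unit for every $r\in R$. I would set $\sigma\define -r/(1+p^n r)\in R$, well-defined because $p\in J(R)$ forces $1+p^n r\in 1+J(R)\subset R^\times$; the product formula then gives
\[
(1+V^n[r])(1+V^n[\sigma])\;=\;1+V^n[r+\sigma+p^n r\sigma]\;=\;1,
\]
as required, using the injectivity of $V^n\colon R\hookrightarrow W_{n+1}(R)$.

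The main obstacle in both cases is the bookkeeping at the last Verschiebung component, where the hypothesis cannot simply be iterated. In case (ii) one uses precisely $R\,w_n(\mathcal A)=R$ to force $V^n[R]\subset\mathcal A$; in case (i) the absence of a higher-ghost assumption is replaced by $p\in J(R)$, which is exactly what is needed to invert $1+p^n r$ and thereby lift inverses through $W_{n+1}(R)\twoheadrightarrow W_n(R)$. In both cases the common engine is the explicit Verschiebung multiplication rule recorded at the outset.
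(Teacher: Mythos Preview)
Your proof is correct. For case (ii) your argument is essentially identical to the paper's: both induct on $n$ and use the identity $V^{n-1}[r]\cdot f = V^{n-1}[r\cdot w_{n-1}(f)]$ together with the hypothesis $Rw_{n-1}(\mathcal A)=R$ to show $V^{n-1}[R]\subset\mathcal A$.

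For case (i) you take a slightly different route. The paper also reduces to proving that a single element $f\in\mathcal A$ with $w_0(f)$ a unit is invertible in each $W_n(R)$, but it does so by invoking the already-proven case (ii): the congruence $w_k(f)\equiv w_0(f)^{p^k}\pmod p$ combined with $p\in J(R)$ forces each $w_k(f)$ to be a unit, so the ideal $(f)$ satisfies the hypothesis of (ii) at every finite level. Your approach is instead a direct hands-on induction, lifting the inverse through $W_{n+1}(R)\twoheadrightarrow W_n(R)$ by explicitly inverting $1+V^n[r]$ via the formula $(1+V^n[r])(1+V^n[\sigma])=1+V^n[r+\sigma+p^n r\sigma]$ with $\sigma=-r/(1+p^n r)$. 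The paper's argument is more economical in that it recycles case (ii); yours is self-contained and makes the role of $p\in J(R)$ more transparent, since it is used precisely to invert $1+p^n r$.
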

\begin{proof}
We begin with case (ii), which we handle by induction on $n$: It suffices to check $V^{n-1}([x])=(0,0,\ldots, x)\in\mathcal A$, for any $x\in R$.  (Here, for simplicity, we denote the composition $V_{n-1}\circ V_{n-2}\circ\cdots \circ V_1$ as $V^{n-1}$.) The 
assumption allows us to write the element $x$ as a sum $\sum_{i=1}^ma_iw_{n-1}(f_i)$ where $a_i\in R$ and $f_i\in\mathcal A$. It follows that
\begin{equation*}
{V^{n-1}}([x])=\sum_{i=1}^m{{V^{n-1}}([a_iw_{n-1}(f_i)])}
=\sum_{i=1}^mf_i{{V^{n-1}}([a_i])},
\end{equation*}
 as elements of $W_n(R)$, which gives what we wanted. It remains to consider (i).  
Notice that $w_0$ is surjective, so that $w_0(\mathcal A)=Rw_0(\mathcal A)$. It follows that we can assume $\mathcal A$ is principal, i.e. $\mathcal A=W_n(R)f$, 
and we only need to check that $f$ is a unit in every quotient $W_n(R)/{{(V_{n-1}\circ\cdots\circ V_{n-k})}W_{n-k}(R)}=W_k(R)$ of Witt vectors of finite 
length. (This then also implies the case $n=\infty$.) However, this follows from (ii) together with $p\in J(R)$ and $w_k(f)\equiv w_0(f)^{p^k}{\rm mod}\ p$ for every $k$.
\end{proof}

If $X$ is a scheme over $W(R)$, we will denote by ${}^F X$ the scheme over $W(R)$
 obtained by pulling back via the Frobenius, i.e.
 $$
 {}^F {X}={X}\times_{\Spec(W(R)), F}\Spec(W(R)).
$$

\subsection{Greenberg transforms and Witt loop schemes}

Suppose that $X$ is an affine scheme which is of finite type, resp. of finite presentation, over $W_n(R)$. By \cite[\S 4]{GreenbergI}
(see also \cite[Prop. 29]{Kreidl}), the functor $R'\to X(W_n(R'))$ is represented by an affine scheme $F_nX$ over 
$R$; this is of finite type, resp. of finite presentation, if $n<\infty$. The scheme  $F_nX$ is sometimes called the Greenberg transform of $X$.
(Again, for $n=\infty$, we will simply write $FX$ instead of $F_\infty X$. Also, if $X$ is a scheme over $W(R)$,
we will write $F_nX$ instead of $F_n(X\otimes_{W(R)}W_n(R))$.) 

We can also consider the functor $R'\to X(W(R')[1/p])$. By \cite[Prop. 32]{Kreidl}, we see that this functor is represented by an Ind-scheme $F_{(p)}X$ over $R$ which we might call the Witt loop scheme of $X$. In \cite{Kreidl},   
$F_{(p)}X$ is called the ``localized'' Greenberg transform of $X$. 

We collect a few useful properties of the Greenberg transforms $F_n$.

\begin{proposition}\label{greenbergprop}
a)  If $X$ and $Y$ are two affine finite type schemes over $W_n(R)$, then
there is a natural isomorphism
\begin{equation}\label{Greenbergproduct}
F_n(X\times_{\Spec(W_n(R))}Y)\cong F_nX\times_{\Spec(R)}F_nY.
\end{equation}

b) If $f: X\to Y$ is a formally smooth, resp.  formally \'etale, morphism of  affine schemes over $W_n(R)$, then $F_nf: F_nX\to F_nY$
is formally smooth, resp. formally \'etale, morphism of affine schemes over $R$.

c) If $X$ is affine and smooth  over $W_n(R)$, and $n<\infty$, then $F_nX$ is  smooth over $R$.

d) If $X$ is  affine and smooth   over $W(R)$ ($n=\infty$), then $FX$ is flat and formally smooth over $R$.

\end{proposition}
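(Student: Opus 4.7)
The plan is to treat the four parts in sequence: (a) by comparison of functors of points, (b) by the infinitesimal lifting criterion reduced to a Witt-vector calculation, (c) by combining (b) with finite presentation, and (d) by another application of (b) together with the local structure of smooth morphisms. For (a), for any $R$-algebra $R'$, the definition of $F_n$ directly gives
\[
(F_n(X\times_{W_n(R)}Y))(R')=(X\times_{W_n(R)}Y)(W_n(R'))=X(W_n(R'))\times Y(W_n(R')),
\]
which is naturally $(F_nX\times_R F_nY)(R')$; since both sides are affine $R$-schemes with the same functor of points, they are canonically isomorphic.

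For (b), I would verify the infinitesimal lifting criterion for $F_nf$. Given a surjection $A'\twoheadrightarrow A$ of $R$-algebras with square-zero kernel $J$, functoriality of Witt vectors yields a surjection $W_n(A')\twoheadrightarrow W_n(A)$ whose kernel $K$ consists of Witt vectors all of whose coordinates lie in $J$. The key technical step is showing $K^2=0$: the universal polynomials computing $(xy)_m$ from the coordinates of $x,y\in W_n$ (derived inductively from the ghost identity $w_m(xy)=w_m(x)w_m(y)$) have every monomial containing at least one coordinate from $x$ and at least one from $y$, so $J^2=0$ forces $K^2=0$. Granted this, the formal smoothness (resp.\ \'etaleness) of $f$ applied to the square-zero extension $W_n(A')\to W_n(A)$ yields the desired lifting (resp.\ unique lifting) property for $F_nf$, via the identifications $(F_nX)(A)=X(W_n(A))$ and similarly for $A'$.

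Part (c) follows immediately from (b) together with the fact (established in \cite{GreenbergI}) that $F_nX$ is of finite presentation over $R$ when $n<\infty$ and $X$ is of finite presentation over $W_n(R)$, since a formally smooth morphism of finite presentation is smooth. The formal smoothness half of (d) is proved identically to (b): the same monomial inspection shows that the kernel of $W(A')\to W(A)$ is square-zero whenever $A'\to A$ has square-zero kernel, so the infinitesimal lifting criterion applies to $FX\to\Spec R$.

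For the flatness half of (d), I would invoke the local structure of smooth morphisms: $X$ smooth over $W(R)$ admits a Zariski cover by opens each equipped with an \'etale $W(R)$-morphism to $\mathbb{A}^d_{W(R)}$. By (a), $F\mathbb{A}^d_{W(R)}$ is the spectrum of the polynomial ring $R[x_{i,j}]_{1\leq i\leq d,\,j\geq 0}$ in the Witt coordinates and hence is flat over $R$; by (b) the \'etale maps over $W(R)$ become formally \'etale, hence flat, maps after applying $F$. So Zariski-locally on $X$ the Greenberg transform $FX$ is flat over $R$, and using compatibility of $F$ with Zariski open immersions (see \cite{Kreidl}) one concludes that $FX$ is itself flat over $R$. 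The main obstacle is thus the Witt-vector monomial analysis in (b) showing $K^2=0$; once this is settled, the remaining parts follow from standard categorical arguments and the local-to-global structure of smoothness.
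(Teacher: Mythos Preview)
Parts (a)--(c) and the formal-smoothness half of (d) are correct and essentially identical to the paper's argument (the paper phrases (b) for a general nilpotent ideal $I$ with $I^r=0$ and observes $W_n(I)^r=0$, but this is the same computation as your monomial analysis).

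The gap is in your flatness argument for (d). You write that the \'etale maps $U\to\mathbb{A}^d_{W(R)}$ become ``formally \'etale, hence flat'' after applying $F$. But formally \'etale does \emph{not} imply flat in general; \'etale $=$ formally \'etale $+$ locally of finite presentation, and it is precisely the finite-presentation half that fails for the infinite Greenberg transform $F=F_\infty$. So the implication you invoke is unjustified, and your Zariski-local argument does not close as written.

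The paper's route for flatness in (d) is more direct and sidesteps this issue entirely: one writes $FX=\Spec(\varinjlim_n A_n)$ where $F_n(X\otimes_{W(R)}W_n(R))=\Spec(A_n)$, observes that each $A_n$ is smooth (hence flat) over $R$ by (c), and concludes since a filtered colimit of flat $R$-algebras is flat. (The transition maps $F_{n+1}X\to F_nX$ are formally smooth, which also recovers formal smoothness of the limit.) Your approach could be repaired by applying exactly this filtered-colimit reasoning to $FU\to F\mathbb{A}^d$, but at that point you have reproduced the paper's argument with an extra, unnecessary Zariski-localization layer.
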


\begin{proof}
Part (a) follows quickly from the defining property of the Greenberg transforms $F_nX(R')=X(W_n(R'))$,
$F_nY(R')=X(W_n(R'))$. Let us show part (b). Consider an $R$-algebra $B$ with a nilpotent ideal $I\subset B$ and set $\bar B=B/I$. Then there is a natural map of sets
\[
a: F_nX(B)\to F_nY(B)\times_{F_nY(\bar B)}F_nX(\bar B).
\]
By definition, $F_nf$ is formally smooth, resp. formally \'etale, if $a$ is surjective, resp. bijective,
for all such pairs $I\subset B$. By definition, the map $a$ is the natural map
\[
a: X(W_n(B))\to Y(W_n(B))\times_{Y(W_n(\bar B))}X(W_n(\bar B)).
\]
However, we can easily see that the kernel $W_n(I)$ of $W_n(B)\to W_n(\bar B)$ 
is still nilpotent (also for $n=\infty$). In fact, if $I^r=0$, then $W_n(I)^r=0$. Hence, the surjectivity,
resp. bijectivity, of $a$ follows since $f$ is assumed to be formally smooth, resp. formally \'etale.
Part (c) follows from (b) and the above since then $F_nX\to \Spec(R)$ is of finite presentation, and so smooth amounts to formally 
smooth. Finally, to show (d) observe that, under our assumptions,  the natural morphism $F_{n+1}X\to F_nX$ 
is formally smooth, for all $n$. (This is obtained using the fact that if $\bar B=B/I$ with $I$ nilpotent, then the natural homomorphism $W_{n+1}(B)\to W_n(B)\times_{W_n(\bar B)}W_{n+1}(\bar B)$ is surjective with nilpotent kernel.) If we write $A_n$ for the $R$-algebra
with $F_n(X\otimes_{W(R)}W_n(R))=\Spec(A_n)$, we have $FX=\Spec(\varinjlim_{n} A_n)$.
Formal smoothness of $FX$ over $R$ follows as above.
Flatness also follows from (c) since smooth implies flat, using also that 
a direct limit of flat $R$-algebras is $R$-flat.
\end{proof}
 
\subsubsection{}
If $X$ is an affine finite type scheme over $W(k_0)$, we will  write $L^+_nX$ and $LX$ for the Greenberg and localized Greenberg transforms of the base changes of $X$ to $W_n(W(k_0))$ by the natural Cartier
ring homomorphism $\Delta_n: W(k_0)\to W_n(W(k_0))$ characterized by $w_k(\Delta_n(x))=F^k(x)$. 
These Greenberg transforms are schemes, resp. an Ind-scheme, over $W(k_0)$.

Since $w_0\cdot \Delta$ is the identity, there is a natural morphism
\[
L^+X\to X
\]
induced by $w_0: W(R)\to R$. We will denote  by $s_0\in X(R)$ the image of the point $s\in L^+X(R)=X(W(R))$
under this map.

The homomorphism $\Delta_n$  commutes with the Frobenius
on the source and target (\cite[Lemma 52]{Zinkdisplay}) and we can see that
we have natural isomorphisms
\[
{}^F (L^+X) \simeq L^+ ({}^F X), \quad {}^F(LX)\simeq L ({}^F X),
\]
while $F$ induces natural morphisms
\[
F: L^+X\to {}^F (L^+X), \quad F: LX\to {}^F(LX)
\]
which cover the Frobenius isomorphism of $\Spec(W(k_0))$.

If $X$ is in addition a group scheme over $W(k_0)$, then $L^+_nX$, resp. $LX$, are group schemes, resp. is a Ind-group scheme, over $W(k_0)$. In this case, the isomorphisms and morphisms above are group scheme, resp. Ind-group scheme, homomorphisms.

\begin{remark}
Suppose that $R$ is a $k_0$-algebra via $\phi: k_0\to R$ and $X=\Spec(A)$ an affine finite type $W(k_0)$-scheme.
We view $R$ as a $W(k_0)$-algebra via the composition $W(k_0)\to k_0\to R$ where the first
map is $w_0$. By our definition of the $W(k_0)$-scheme $L^+X$ above, which uses the Cartier homomorphism, 
the points $L^+X(R)=X(W(R))$ are given by ring homomorphisms $A\to W(R)$ such that the composition
$W(k_0)\to A\to W(R)$
is equal to
\[
W(k_0)\xrightarrow{\Delta} W(W(k_0))\xrightarrow{\epsilon}W(k_0)\xrightarrow{W(\phi)}W(R),
\]
where the second map $\epsilon$ is the result of applying the functor $W(-)$ to $w_0: W(k_0)\to k_0$.
However, $\epsilon\circ\Delta$ is the identity by \cite[eq. 92]{Zinkdisplay} and so this composition 
is equal to $W(\phi): W(k_0)\to W(R)$. 

This gives a simpler description of the special fiber $L^+X\otimes_{W(k_0)}k_0$; its $R$-valued points for $\phi: k_0\to R$ are the $W(R)$-valued points of
$X$ where $W(R)$ is regarded as a $W(k_0)$-algebra via $W(\phi)$. In particular, we see that 
\[
FX=L^+X\otimes_{W(k_0)}k_0.
\]
As mentioned above, some authors call
the special fiber $FX=L^+X\otimes_{W(k_0)}k_0$ the Greenberg transform of $X$.
\end{remark}

\subsection{Displays}\label{ZinkDisplays}

Let us now quickly review the definition of displays  as in
\cite{Zinkdisplay}. Suppose that $R$ is a (commutative) ring 
which is $p$-adically complete and separated.
A display over $R$ is a quadruple
\[
\D =(M,N,F_0,F_1)
\]
where $M$ is a finitely generated projective $W(R)$-module, $N$ a submodule such that $I(R)M\subset N$ and
 $M/N$ a projective $R$-module, $F_0 :M\to M$ and $F_1 :N\to M$ are $F$-linear maps such that 
the image $F_1(Q)$ generates $M$ as a $W(R)$-module, and we have $F_1(Vw\cdot x) = wF_0(x)$ for $w\in W(R)$ and 
$x\in M$. See \cite{Zinkdisplay} and other places for the definition of nilpotence and of a nilpotent display.  
To avoid confusion, let us note that in \cite{Zinkdisplay} displays are called $3n$-displays ($3n=$ not necessarily nilpotent), while nilpotent displays are called displays. The notation for these objects there is $(P, Q, F, V^{-1})$ in which 
$V^{-1}$ is just a symbol. 

Over a perfect field, a display is the same as a Dieudonn\'e module
$(M, F, V)$; then $N = V(M)$ and $F_1$ is the inverse of $V$. In that case,
the nilpotence condition means that $V$ is $p$-adically topologically nilpotent.

Displays over $R$ form a category and Zink constructs a functor $BT$ from nilpotent displays 
over $R$ to formal $p$-divisible groups over $R$ which he shows to be an equivalence of categories
in many cases, for example when $R/pR$ is a finitely generated algebra over 
${\mathbb F}_p$. Later, Lau \cite{LauInvent} showed that $BT$ gives an equivalence
  for all $R$ which are $p$-adically complete and separated.

Let $\D=(M,N,F_0,F_1)$ be a display over $R$ and suppose there exists a ``normal decomposition'' of $\D$, i.e.
a decomposition $M=T\oplus L$ with $T$, $L$ two $W(R)$-modules such that also $N=I(R)T\oplus L$. 
Suppose that both $T$ and $L$ are free $W(R)$-modules of rank $d$ and $h-d$
(we can always find such a decomposition Zariski locally on $\Spec(R)$, see \cite{Zinkdisplay}). Assume $e_1,\ldots , e_d$
is a basis of $T$ and $e_{d+1},\ldots , e_h$ a basis of $L$. Then there is an invertible matrix $U=(u_{ij})$
in ${\rm GL}_h(W(R))=L^+\GL_h(R)$ such that
\[
F_0e_j=\sum_{i=1}^h u_{ij}e_i, \ \ \ 1\leq j\leq  d,
\]
\[
\ \ F_1e_j=\sum_{i=1}^h u_{ij} e_{i}, \ \ \ d+1\leq j\leq  h.
\]
We can write this as a block matrix
\[
U=\begin{pmatrix}
A & B\\ C& D
\end{pmatrix}
\]
with $A$ of size $d\times d$ and $D$ of size $(h-d)\times (h-d)$. Suppose that $\D'$ is another display over $R$
which is also given by a block matrix $U'$ with blocks of the same sizes. Then a morphism of displays $\D\to \D'$ is given by a
block matrix of the form
\[
H=\begin{pmatrix}
X & V(Y)\\ Z& T
\end{pmatrix}
\]
with $X$, $Y$, $Z$, $T$ blocks with coefficients in $W(R)$ which satisfies
\begin{equation}\label{PhiId1}
\begin{pmatrix}
A' & B'\\ C'& D'
\end{pmatrix} \begin{pmatrix}
F(X) & Y\\ pF(Z)& F(T)
\end{pmatrix}=\begin{pmatrix}
X & V(Y)\\ Z& T
\end{pmatrix}\begin{pmatrix}
A & B\\ C& D
\end{pmatrix}.
\end{equation}
 Set
 \begin{equation}\label{Phi}
 \Phi(H)=\Phi\left(\begin{pmatrix}
X & V(Y)\\ Z& T
\end{pmatrix}\right)=\begin{pmatrix}
F(X) & Y\\ pF(Z)& F(T)
\end{pmatrix}.
 \end{equation}
 The morphism $\D\to \D'$ is an isomorphism if and only if $H$ is invertible. Then the identity above can be written
 \begin{equation}\label{PhiId}
 H^{-1}U'\Phi(H)=U.
 \end{equation}
 
 By \cite[Theorem 37]{Zinkdisplay} displays form a fpqc stack over ${\rm Nilp}_{\Z_p}$. 
 The above discussion then implies that displays of rank $h$ and ${\rm rank}_R(M/N)=d$ are given by the fpqc quotient stack
\[
 [L^+\GL_h /_{\Phi}\,   H^{(d,h-d)}]
 \]
 over ${\rm Nilp}_{\Z_p}$. Here, $H^{(d,h-d)}(R)$ is the subgroup of $L^+\GL_h(R)=\GL_h(W(R))$ of matrices of the
 form 
 \[
H=\begin{pmatrix}
X & V(Y)\\ Z& T
\end{pmatrix}
\]
as above and the quotient is for the right action by ``$\Phi$-conjugation''  as in (\ref{PhiId}).
Of course, here $\Phi$ is the $F$-linear map given by (\ref{Phi}).

\section{$(G,\mu)$-displays}\label{sec:Gdisplays}

In this section, we define $(G,\mu)$-displays and show several basic properties.

\subsection{The divided Frobenius $\Phi_{G, \mu}$}

We start by defining the ``divided Frobenius''. This generalizes the map $\Phi$
of the previous section and plays a central role in everything that follows.

\subsubsection{}
Suppose $G$ is a reductive group scheme over $\Z_p$
and that 
\[
\mu: \Gm_{W(k_0)}\to G_{W(k_0)}
\]
 is a minuscule cocharacter.
As in Appendix \ref{simplerversion} we will denote by  $P_\mu\subset G_{W(k_0)}$ the parabolic subgroup scheme defined by $\mu$. This is  the parabolic subgroup of $G$ 
such that $P_\mu\times_{W(k_0)}W$ contains exactly the root groups $U_a$ of the split group $G_W$, 
for all roots $a$ with $\langle \mu, a\rangle \geq 0$.   We denote by $U_\mu$ the corresponding unipotent 
group which is the unipotent radical of $P_\mu$.

We will denote by $\frakg$,   $\frakp$, $\frakp^{-}$, $\fraku$, $\fraku^{-}$, the Lie algebras 
of $G$,   $P_\mu$, $P_{\mu^{-1}}$, $U_\mu$, $U_{\mu^{-1}}$; these are finite free $\z_p$-, resp.  $W(k_0)$-modules,
and we have the weight decompositions
\begin{equation}\label{Liealgebras}
W(k_0)\otimes_{\z_p} \frakg=\frakp\oplus\fraku^-.
\end{equation}

  We will denote by $H^\mu$  the group scheme  over $\Spec(W(k_0))$
  with
  $$
    H^\mu(R)=\{ g\in L^+G(R)\ |\ g_0\in P_\mu(R)\}.
  $$
   We can see that  $H^\mu$  is a closed subgroup scheme of $L^+G$. 
 \begin{proposition}\label{divFrobProp}
There is a group scheme homomorphism 
$$
\Phi_{G,\mu}: H^\mu\to L^+   G_{W(k_0)}
$$
characterized by the following property: We have
\begin{equation}\label{identDivFr}
\Phi_{G,\mu}(h)= F\cdot (\mu(p)\cdot h\cdot \mu(p)^{-1})\in {}^FG(W(R)[1/p])=G(W(R)[1/p]).
\end{equation}
\end{proposition}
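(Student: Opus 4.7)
The plan is to split into uniqueness and existence. Uniqueness of $\Phi_{G,\mu}$ as a morphism of schemes follows because $H^\mu$ is $W(k_0)$-flat: it is the fibre product $L^+G\times_G P_\mu$ of a $W(k_0)$-flat scheme (by Proposition \ref{greenbergprop}(d)) with a smooth closed subscheme of $G$, so its coordinate ring embeds into its generic fibre, and any morphism into ${}^F L^+G$ is determined by its restriction to $p$-torsion free test rings where the stated formula directly specifies the map of functors. The entire content of the proposition is therefore existence: showing that the element $F(\mu(p)\cdot h\cdot\mu(p)^{-1})$, a priori only in $G(W(R)[1/p])$, is in fact integral, so that it defines a point of $({}^FL^+G)(R)$.

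The key inputs are the minuscule condition on $\mu$ and the identity $F\circ V=p$ on $W(R)$. Because $\mu$ is minuscule, the adjoint action of $\Gm$ via $\mu$ on $\frakg$ has weights in $\{-1,0,+1\}$; conjugation by $\mu(p)$ preserves $P_\mu$ and scales $\fraku^-$ by $p^{-1}$. Moreover $U_{\mu^{-1}}$ is commutative (there are no weight $-2$ roots), and is canonically and $\mu$-equivariantly isomorphic, as a group scheme, to the vector group $\fraku^-$. Finally, multiplication $U_{\mu^{-1}}\times P_\mu\to G$ is an open immersion onto the opposite big cell $\Omega$, which contains $P_\mu$. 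Working \'etale-locally on $\Spec R$ (or, for the universal argument, on an open cover of $H^\mu$ obtained by $L^+P_\mu$-translates of the locus where the universal element factors through $L^+\Omega$), we may decompose every $h\in H^\mu(R)$ uniquely as
\[
h=n^-\cdot g^+,\qquad n^-\in U_{\mu^{-1}}(W(R)),\ w_0(n^-)=1,\ g^+\in P_\mu(W(R)).
\]

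For the integrality check, the factor $\mu(p)g^+\mu(p)^{-1}$ lies in $P_\mu(W(R))$, so its image under $F$ is manifestly in $({}^FP_\mu)(W(R))\subset G(W(R))$. Identifying $n^-\leftrightarrow x\in\fraku^-\otimes_{W(k_0)}W(R)$, the condition $w_0(n^-)=1$ translates to $x\in\fraku^-\otimes I(R)$, and $\mu(p)n^-\mu(p)^{-1}$ corresponds to $p^{-1}x\in\fraku^-\otimes W(R)[1/p]$. Since $V$ is injective on $W(R)$ and $F\circ V=p$, there is a well-defined operator $V^{-1}\colon I(R)\to W(R)$ with $F=pV^{-1}$ on $I(R)$; applied coefficient-wise, $F(p^{-1}x)=V^{-1}(x)\in\fraku^-\otimes W(R)$. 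Hence $F(\mu(p)n^-\mu(p)^{-1})\in U_{\mu^{-1}}(W(R))$, and multiplying by the $P_\mu$-factor gives $F(\mu(p)h\mu(p)^{-1})\in G(W(R))$, which defines $\Phi_{G,\mu}(h)\in({}^FL^+G)(R)$.

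Finally, uniqueness of the $n^-g^+$ decomposition makes the construction canonical, so the local assignments glue to a morphism of \'etale sheaves, hence of schemes since both sides are representable. That $\Phi_{G,\mu}$ is a group homomorphism follows by checking on $p$-torsion free test rings --- where the formula exhibits it as the composition of $\mu(p)$-conjugation and $F$ --- and then extending by $W(k_0)$-flatness of $H^\mu\times H^\mu$. The main obstacle I expect is the functorial production of the decomposition $h=n^-\cdot g^+$: without a $p$-nilpotency or locality assumption, it is not immediate that $h\colon\Spec W(R)\to G$ factors through the big cell $\Omega$, and one must argue either by an \'etale/Zariski cover of $H^\mu$ by translates of the open $H^\mu\cap L^+\Omega$, or by a direct descent argument, in order to certify that $\Phi_{G,\mu}$ is a bona fide morphism of schemes and not merely a natural transformation on $p$-torsion free rings.
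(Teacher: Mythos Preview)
Your approach is essentially the same as the paper's: decompose $h$ via the big cell into a $P_\mu$-factor (handled by the extension of $\Int_\mu$ to the monoid $\mathbb{A}^1$, so that $\mu(p)g^+\mu(p)^{-1}\in P_\mu(W(R))$) and a $U_{\mu^{-1}}$-factor with coordinates in $I(R)$ (handled by $V^{-1}$), and deduce the group-homomorphism property from $p$-torsion freeness of $\mathcal{O}_{H^\mu}$. The order of the two factors is reversed relative to the paper, but this is immaterial.

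The obstacle you flag at the end is the real content, and the paper resolves it differently from what you suggest. Rather than \'etale localization or translates of $L^+\Omega$ inside $H^\mu$ (the locus of $W(R)$-points factoring through the big cell is not obviously open, and there is no evident reason $L^+P_\mu$-translates should cover), the paper uses a specific faithfully flat Zariski cover of $\Spec R$ itself: set $R_1=(1+pR)^{-1}R$ and $R_2=R[1/p]$. On $R_1$ one checks $p\in J(R_1)$, and then Lemma~\ref{jaclemma02}(i) lifts the condition $h_0^*(\mathcal I)R=R$ (which holds since $h_0\in P_\mu\subset\Omega$) through $w_0$ to $h^*(\mathcal I)W(R_1)=W(R_1)$, so $h$ factors through the big cell and your decomposition exists (this is the paper's Proposition~\ref{productI}). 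On $R_2$ the element $\mu(p)$ is an honest point of $G(W(R_2))$, so the characterizing formula defines $\Phi_{G,\mu}$ directly with no decomposition needed. One checks the two constructions agree over $R_{12}$ (using that over $R_{12}$ the map $V^{-1}$ on $I$ agrees with $F\circ p^{-1}$) and descends. This is applied to the universal case $R=\mathcal{O}_{H^\mu}$.

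So your proof is correct modulo the final step, which you correctly isolate; the paper's Zariski-cover trick is the missing ingredient, and is cleaner than the alternatives you sketch.
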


 \begin{proof}
 Consider  the group scheme $L^{>0}U_{\mu^{-1}}\subset L^+U_{\mu^{-1}}$ with $R$-valued points
  $u\in U_{\mu^{-1}}(W (R))$ such that $u_0=1$. 
  
  We will use the following:
  
  \begin{proposition}\label{productI}
  Assume that $p$ is in the Jacobson radical $J(R)$ of $R$. Then multiplication in $L^+G(R) $ gives a bijection
$$
 L^+P_\mu(R)\times L^{>0}U_{\mu^{-1}}(R)\cong H^\mu(R).
 $$
   \end{proposition}
   \begin{proof}  
   Let $h\in H^\mu(R)\subset L^+G(R)=G(W(R))$. 
 We would like to show that the corresponding morphism $h: \Spec(W (R))\to G$ factors through $G^*=P_\mu\times_{W(k_0)}U_{\mu^{-1}}$. 
 Recall that by \ref{parabolicfacts}, $G^*$ is an open subscheme of the affine scheme $G=\Spec(A_0)$ with the open immersion $G^*\to G$ given by 
 multiplication. Suppose $\sqrt\mathcal I=\mathcal I\subset A_0$ is the ideal corresponding to the reduced induced subscheme structure on the complement 
  $G-G^*$. The element $h$ is given by $h^*:  A_0\to W(R)$. Consider  the composition
 $$
 h^*_0=w_0\circ h^*: A_0\to W(R)\xrightarrow{ } R.
 $$
 We know that $h^*_0(\mathcal I)R=R$. By part (i) of  
Lemma \ref{jaclemma02} it follows that $h^*(\mathcal I)W(R)=W(R)$, and so 
$h$ factors through $G^*$. The result now
follows from the definition of $H^\mu$. 
\end{proof}
 
We now continue with the construction of $\Phi_{G,\mu}$. By Lemma \ref{minusculeunipotent},
we have $U_{\mu^{-1}}\simeq 
{\mathbb G}_a^r\times_{\Z_p}W(k_0)$. We have $L^+{\mathbb G}^r_a(R)=W(R)^r$,
$L^{>0}{\mathbb G}^r_a(R)=I(R)^r$ and we can define
\[
V^{-1}: L^{>0}{\mathbb G}^r_a\to {}^FL^+{\mathbb G}^r_a
\]
as given by the direct sum of $r$ copies of $V^{-1}: I(R)\to W(R)$. This
 gives
  $$
  V^{-1}: L^{>0}U_{\mu^{-1}}\to {}^FL^+U_{\mu^{-1}}
  $$
  which is independent of choices.
  Consider also the composition
  $$
 F\cdot \Int_\mu(p): L^+P_\mu(R)\to L^+P_\mu(R) \to {}^FL^+{} G(R)=L^+{}G(R).
  $$
 Here  we are using the extension of conjugation
  $\Int_\mu(p): {\mathbb A}^1\to {\rm End}(P_\mu)$ (see \ref{dynamicparabolic}) applied to $W (R)$
  so that $\Int_\mu(p)\in {\rm End}(P_\mu)(W (R))$.
  
First suppose that $p\in J(R)$ and let $h\in H^\mu(R)$;
by  Proposition \ref{productI}, we can write (uniquely) 
$$
h=h'\cdot h'', \quad h'\in L^+P_\mu(R), \ \ h''\in L^{>0}U_{\mu^{-1}}(R),
$$
and we set
$$
\Phi_1(R)(h):=(F\cdot  \Int_\mu(p))(h')\cdot V^{-1}(h'') \in {}^FL^+G(R)=L^+G(R).
$$
This gives 
$$
\Phi_1(R): H^\mu(R)\to L^+G(R)
$$
when $p\in J(R)$.

Now suppose $R$ is arbitrary; consider $R_1=(1+pR)^{-1}R$ and $R_2=R[1/p]$.  
This gives a faithfully flat cover of $\Spec(R)$  
  $$
  \Spec(R)=\Spec(R_1)\cup \Spec(R_2).
  $$
Notice that $p$ is in every maximal ideal of $R_1$; indeed if  
 $\frakm$ is such an ideal and $p$ is not in $\frakm$, we can find $r=b/(1+pb')\in R_1$,
 such that $pr-1\in \frakm$. This gives $1+p(b'-b)\in \frakm$, a contradiction,
 since this is a unit in $R_1$. Therefore, $p$ is in the Jacobson radical of $R_1$.

\begin{itemize}

\item
Let   $h_2\in H^\mu(R_2)=H^\mu(R[1/p])$. Since 
\[
p\in W(R[1/p]))^\times=\g_m(W(R[1/p])),
\]
we can consider $\mu(p)$, $\mu(p)^{-1} \in L^+G(R[1/p])$. We  define 
 $$
 \Phi_2(R_2)(h_2):=F(\mu(p)\cdot h_2\cdot \mu(p)^{-1})\in L^+G(R[1/p]).
 $$
 \item Let $h_1\in H^\mu(R_1)=H^\mu((1+pR)^{-1}R)$. 
 Since $p\in J(R_1)$, we consider $\Phi_1(R_1)(h_1)\in L^+G(R_1)$ as above.
 \end{itemize}
 
 Now let us apply this to $R=\O_{H^\mu}$ the affine algebra of $H^\mu$ and the universal points $h_i\in H^\mu(R_i)$.
We obtain elements $\Phi_i(R_i)(h_i)\in L^+ G(R_i)$.  
 These points  agree over $\Spec(R_{12})$ with $R_{12}=(1+pR)^{-1}R\otimes_R R[1/p]=  (1+pR)^{-1}R[1/p]$.
Indeed, when $p$ is invertible, the map $V^{-1}: L^{>0}U_{\mu^{-1}}\to {}^FL^+ U_{\mu^{-1}}$ 
  agrees with division by $p$ followed by $F$. On the other hand, the  adjoint action of $\mu(p)^{-1}$ on $\fraku^-=\Lie(U_{\mu^{-1}})$
  agrees with multiplication by $p$. Therefore, by descent, we obtain a well-defined morphism
  \begin{equation} 
 \Phi_{G,\mu} :  H^\mu\to L^+ G_{W(k_0)}.
  \end{equation}
Notice that $R=\O_{H^\mu}$ is $p$-torsion free and so $R\subset R_2=R[1/p]$
and $W(R)\subset W(R[1/p])$. Since $\Phi_2(R_2)$ is obviously a group homomorphism
we conclude that $\Phi_{G,\mu}$ is a group scheme homomorphism which satisfies the 
identity (\ref{identDivFr}). In fact, by considering $R=\O_{H^\mu}$ 
we see that $\Phi_{G,\mu}$ is the unique morphism that satisfies that identity.
\end{proof}

\subsection{Definitions and basic properties}

Suppose that $(G, \mu)$ is a pair of a reductive group scheme over 
$\z_p$ and a minuscule cocharacter $\mu: \g_{W(k_0)}\to G_{W(k_0)}$, 
where $k_0$ is a finite field of characteristic $p$. In the previous 
paragraph, we have constructed $H^\mu$ and 
\[
\Phi_{G,\mu}: H^\mu\to {}^FL^+G_{W(k_0)}=L^+G_{W(k_0)}
\]
over $\Spec(W(k_0))$.  Suppose $S$ is a $W(k_0)$-scheme.

\begin{definition}
A $(G,\mu)$-display over $S$ is a triple $\D:=(P, Q, u)$ where
\begin{itemize}
\item $Q$ is an (fpqc locally trivial) $H^\mu$-torsor   over $S$,
\item $P:=Q\times_{H^\mu}L^+G_{W(k_0)}$ is the induced $L^+G_{W(k_0)}$-torsor,
\item $u:  Q\to P$ is a morphism which is compatible with 
  $\Phi_{G,\mu} $ in the sense that $u(q\cdot h)=u(q)\cdot \Phi_{G,\mu}(h)$.
\end{itemize}
\end{definition}
Notice that $P$ is determined by $Q$ by $P:=Q\times_{H^\mu}L^+G_{W(k_0)}$ 
and so we will sometimes omit it from the notation. Our convention is that groups act on
the right.

A morphism $(P_1, Q_1, u_1)\to (P_2, Q_2, u_2)$ between two $(G,\mu)$-displays
is a $H^\mu$-torsor isomorphism $Q_1\xrightarrow{\sim} Q_2$ which is compatible with
$u_1$ and $u_2$ in the obvious manner. 

In most of the paper, we consider $(G,\mu)$-displays over schemes in ${\rm Nilp}_{W(k_0)}$.
We can see that $(G, \mu)$-displays form a fpqc stack in groupoids over ${\rm Nilp}_{W(k_0)}$ 
which we will denote by $\B(G,\mu)$.

\begin{example}\label{glnexample}
{\rm For non-negative integers $d\leq h$, we let $\mu_{d,h}: \g_m\to \GL_h$ be the minuscule cocharacter 
of $\GL_h$ over $\z_p$ given by\footnote{The notation $z^{(r)}$ means that there are $r$ copies of $z$.}
\begin{equation*}
\mu_{d, h}(z)=\diag( {1}^{(d)},{z }^{(h-d)}).
\end{equation*}
The discussion in \ref{ZinkDisplays} implies that there is an equivalence between 
 the stack of $(\GL_h, \mu_{d, h})$-displays, and the stack of (Zink, not-necessarily-nilpotent) displays of rank $h$ and 
dimension $d$. }
\end{example}

\subsubsection{} The fpqc quotient $L^+G_{W(k_0)}/H^\mu$ can be identified with the homogeneous space $X_\mu=G_{W(k_0)}/P_\mu$. If $\D:=(P, Q, u)$ is a 
$(G, \mu)$-display over $S$ then the quotient $P/H^\mu$ is   a $X_\mu$-bundle. Since the $L^+G_{W(k_0)}$-torsor
$P$ admits a reduction (given by $Q$) to the subgroup $H^\mu$  we also obtain a section
\[
\alpha: S=Q/H^\mu\to P/H^\mu
\]
of this $X_\mu$-bundle over $S$. We call this section the Hodge filtration of the $(G, \mu)$-display $\D$.
We will also consider the vector bundle 
\[
T_\D:=\alpha^*({\mathcal N}_{\alpha(S)|P/H^\mu})
\]
 over $S$ obtained by pulling
back via $\alpha$ the normal bundle of the (regular) closed immersion
$\alpha(S)\hookrightarrow P/H^\mu$. Under a   condition on $\D$, we will
see that the bundle $T_\D$ controls the  deformations of $\D$
 (see Theorem \ref{globaldef}).
 
 \subsubsection{}\label{torsordatum} Suppose $S=\Spec(B)$ with 
 $B$ a $p$-adically complete and separated $W(k_0)$-algebra.
Then using Proposition \ref{torsor02} (see also Remark \ref{torsor04} (ii)) we can reinterpet the datum of the $H^\mu$-torsor $Q$ as a pair $(\P, \alpha)$,
where $\P$ is a $G$-torsor over $\Spec(W(B))$
and $\alpha$ a section over $\Spec(B)$ of the $X_\mu$-bundle
obtained by first restricting $\P$ along the closed immersion $\Spec(B)\to \Spec(W(B))$ and then taking quotient
by the action of $P_\mu$. The $G$-torsor $\P$ corresponds, via 
Appendix \ref{torsor02}, to $P:=Q\times_{H^\mu}L^+G_{W(k_0)}$ so that $P=F\P$, the $X_\mu$-bundle is $P/H^\mu$ and the section is $Q/H^\mu\to P/H^\mu$
as above.

\begin{definition}
  We say that a $(G,\mu)$-display $\D=(P, Q, u)$  
is banal, if the torsor $Q$ is trivial. 
Banal $(G,\mu)$-displays over $S$ in ${\rm Nilp}_{W(k_0)}$ give a full 
subgroupoid $B(G, \mu)(S)$ of $\B(G, \mu)(S)$.
\end{definition}

\begin{remark} 
By Corollary \ref{torsCor}, any (fpqc locally trivial) $H^\mu$-torsor or $L^+G$-torsor over $S$ 
in ${\rm Nilp}_{W(k_0)}$ is 
locally trivial for
the \'etale topology on $S$. Therefore, every $(G,\mu)$-display  
is banal locally for the \'etale topology on $S$.
\end{remark}

\subsubsection{} Suppose that $(P, Q, u)$ is banal. Then,
after choosing a trivialization
\[
\alpha: H^\mu\xrightarrow{\sim} Q
\] which also induces 
$\alpha:  L^+G_{W(k_0)}\xrightarrow{\sim} P:=Q\times_{H^\mu}L^+G_{W(k_0)}$, the triple 
 $(P, Q, u)$ is determined by 
\[
U:=\alpha^{-1}u(\alpha(1))\in {}^FL^+G(\O_S(S))=G(W(\O_S(S))).
\]
 A different trivialization $\alpha'=\alpha\cdot h$ gives $U'=\alpha'^{-1}u(\alpha'(1))$.
 We can see that  
 $$
 U'=h^{-1}\cdot U\cdot \Phi_{G,\mu}(h).
 $$
Therefore, the groupoid of banal $(G,\mu)$-displays $B(G, \mu)(S)$ can be identified with the quotient
groupoid
$$
[L^+G(S)/_{\Phi_{G,\mu}} \ H^\mu(S)]
$$
where the action is by $\Phi_{G,\mu}$-conjugation as above. This implies that
the fpqc stack of $(G, \mu)$-displays  can   be identified with a fpqc quotient
stack 
\[
\B(G,\mu)=[L^+G_{W(k_0)}/_{\Phi_{G,\mu}} \ H^\mu].
\]

\subsubsection{}
By the definition, there is a natural forgetful morphism 
\[
q: \B(G,\mu)\to BH^\mu, \quad (P, Q, u)\mapsto Q,
\]
where $BH^\mu$ is the fpqc stack of $H^\mu$-torsors.

 \begin{lemma}\label{diagonal}
 a) The morphism $q: \B(G,\mu)\to BH^\mu$ is representable
and affine. 

b) The diagonal morphism $\Delta: \B(G,\mu)\to \B(G, \mu)\times_{W(k_0)}\B(G, \mu)$ is representable and affine.
\end{lemma}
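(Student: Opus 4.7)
The plan is to reduce both claims to the banal situation and then descend. The key ingredients are: (i) by the Remark following the definition of banal displays, every $H^\mu$-torsor is trivial \'etale locally on the base; (ii) $L^+G = \varprojlim_n L^+_n G$ is an affine scheme over $W(k_0)$ (as a cofiltered limit of affine finite-type schemes), and $H^\mu \subset L^+G$ is a closed subgroup scheme, hence also affine; (iii) affine morphisms satisfy fpqc descent. Given (i)--(iii), in each part it suffices to handle the banal case and then descend along an \'etale trivializing cover.

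For (a): given a morphism $S \to BH^\mu$ classifying an $H^\mu$-torsor $Q_0$ over $S$, set $P_0 := Q_0 \times^{H^\mu} L^+G_{W(k_0)}$. Unwinding the definitions, the fiber product $\B(G,\mu) \times_{BH^\mu} S$ represents the $S$-functor $T \mapsto \Hom_T^{\Phi_{G,\mu}}(Q_0|_T, P_0|_T)$ of morphisms compatible with the $H^\mu$-action on the source and the induced $H^\mu$-action on the target via $\Phi_{G,\mu}$. When $Q_0 = H^\mu_S$ is trivial, equivariance forces any such $u$ to be uniquely determined by its value $u(1) \in L^+G(S)$, with no further constraint; so the functor is represented by $L^+G \times_{W(k_0)} S$, which is affine over $S$. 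The general case follows by \'etale descent of affine morphisms.

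For (b): given two $(G,\mu)$-displays $\D_i = (P_i, Q_i, u_i)$ over $S$ for $i = 1,2$, the fiber of $\Delta$ represents the isomorphism functor $\mathrm{Isom}_S(\D_1, \D_2)$, which sits as a subfunctor of the $H^\mu$-equivariant torsor isomorphism functor $\mathrm{Isom}_S^{H^\mu}(Q_1, Q_2)$; this latter functor is a twisted form of $H^\mu$ and is therefore affine over $S$. In the banal case, with $Q_i = H^\mu_S$ and $u_i$ corresponding to $U_i \in L^+G(S)$, any $H^\mu$-equivariant $\phi : Q_1 \to Q_2$ is left multiplication by some $h \in H^\mu(S)$, and compatibility with $u_1, u_2$ reduces to the single closed equation $h \cdot U_1 = U_2 \cdot \Phi_{G,\mu}(h)$ inside $L^+G \times_{W(k_0)} S$; this cuts out an affine closed subscheme of $H^\mu \times_{W(k_0)} S$. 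The general case again follows by descent after an \'etale trivialization of $Q_1$ and $Q_2$. The main technical point in both parts is simply to set up the identification of the fibers of $q$ and $\Delta$ as Hom- and Isom-functors correctly; once these identifications are made, the conclusion follows mechanically from the affineness of $L^+G$ and $H^\mu$ and from fpqc descent of affine morphisms.
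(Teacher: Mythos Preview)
Your proof is correct and follows essentially the same strategy as the paper. The paper packages part (a) slightly more abstractly by observing directly that the fiber over $Q$ is the affine $S$-scheme $\underline{\rm Isom}_{L^+G}(P',P)$ with $P'=Q\times_{H^\mu,\Phi_{G,\mu}}L^+G$ (which is equivalent to your $\Phi_{G,\mu}$-equivariant Hom functor), and then deduces (b) formally from (a) together with the affineness of the diagonal of $BH^\mu$, rather than computing the Isom functor of displays directly; but the underlying content is the same.
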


\begin{proof}
If $Q_i\to S$, $i=1,2$, are $H^\mu$-torsors, 
we can see by descent, that the
functor on $S$-schemes 
\[
(T\to S)\mapsto {\rm Isom}_{H^\mu}(Q_1\times_S T, Q_2\times_S T)
\]
is represented by an affine $S$-scheme $\underline{\rm Isom}_{H^\mu}(Q_1, Q_2)$. This implies that the diagonal
of the stack $BH^\mu$ is representable and affine. There is a similar statement for isomorphisms between
$L^+G$-torsors.

Now suppose that $S\to BH^\mu$ is the morphism corresponding to an $H^\mu$-torsor $Q$. Recall that we have
$P=Q\times_{H^\mu} L^+G$.
Then the fibered 
product $S\times_{BH^\mu}\B(G,\mu)$ is represented by the affine $S$-scheme
$\underline{\rm Isom}_{L^+G}(P', P)$, where $P'=Q\times_{H^\mu, \Phi_{G,\mu}} L^+G$.
Part (a) now follows easily from the definition of $\B(G, \mu)$; then part (b) follows quickly from part (a) and the above.
\end{proof}

\subsubsection{} Suppose that $A$ is a Noetherian $W(k_0)$-algebra complete and separated 
for the $I$-adic topology for $I\subset A$ an ideal that contains a power of $p$. Then $A$ is also complete and separated
for the $p$-adic topology. 

\begin{proposition}\label{limitdisplay}
There is a natural equivalence between the category of $(G,\mu)$-displays $\D$ over $A$ and the category of compatible 
systems of $(G, \mu)$-displays $\D_n$ over $A/I^n$, $n\geq 1$, given by $\D\mapsto \{\D\times_A A/I^n\}_n$.
\end{proposition}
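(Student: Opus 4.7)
The statement is a Grothendieck-existence-type theorem for the stack $\B(G,\mu)$ along the $I$-adic tower. The plan is to verify separately full faithfulness and essential surjectivity of the restriction functor $\D\mapsto\{\D\times_A A/I^n\}_n$.

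\smallskip

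For full faithfulness, I would apply Lemma \ref{diagonal}(b): the diagonal of $\B(G,\mu)$ is representable by an affine morphism, so for two $(G,\mu)$-displays $\D,\D'$ over $A$ the Isom scheme $\underline{\mathrm{Isom}}(\D,\D')=\Spec(B)$ is affine over $\Spec(A)$. A compatible system of isomorphisms $\{f_n\colon\D_n\iso\D'_n\}_n$ is the same as a compatible system of $A/I^n$-points of $\Spec(B)$, i.e.\ compatible $A$-algebra homomorphisms $B\to A/I^n$; by $I$-adic completeness $A=\varprojlim_n A/I^n$, these assemble to a unique homomorphism $B\to A$, yielding the unique isomorphism $\D\iso\D'$ restricting to $\{f_n\}$.

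\smallskip

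Essential surjectivity is the main content. Given a compatible system $\{\D_n=(P_n,Q_n,u_n)\}_n$, the strategy is: reduce to the banal case by an étale base change of $A$, assemble the banal data in the inverse limit, then descend back. Since $A$ is Noetherian and $I$-adically complete, $(A,I)$ is a Henselian pair and the small étale site of $\Spec(A)$ is equivalent to that of $\Spec(A/I)$. By Corollary \ref{torsor03} one may find an étale cover $\bar R\to A/I$ trivializing $Q_1$; lift it uniquely to an étale cover $A\to R$ with $R/IR=\bar R$, and write $R_n:=R/I^n R$. Using the $(\P_n,\alpha_n)$-reformulation of \ref{torsordatum} (applicable because each $R_n$ is $p$-adically complete), trivializing $\D_n|_{R_n}$ amounts to trivializing the $G$-torsor $\P_n$ on $\Spec(W(R_n))$ in such a way that $\alpha_n$ becomes the base-point section of the resulting trivial $X_\mu$-bundle. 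Since the kernel of $W(R_n)\twoheadrightarrow W(R_{n-1})$ is a square-zero ideal (an easy Witt-vector computation, using that $(I^{n-1}R/I^n R)^2=0$ in $R_n$ for $n\ge 2$) and both $G$ and $X_\mu=G/P_\mu$ are smooth over $\Z_p$, such trivializations lift compatibly through the tower, so each $\D_n|_{R_n}$ becomes banal. A banal $\D_n|_{R_n}$ is represented by an element $U_n\in L^+G(R_n)=G(W(R_n))$, and since the Witt functor commutes with inverse limits of rings and $G$ is affine,
\[
\varprojlim_n G(W(R_n))\;=\;G\bigl(\varprojlim_n W(R_n)\bigr)\;=\;G(W(R))\;=\;L^+G(R),
\]
so the $U_n$ assemble into a single $U\in L^+G(R)$ defining a banal $(G,\mu)$-display $\D_R$ over $R$ whose $n$-th reduction is $\D_n|_{R_n}$. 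The tautological descent datum making each $\D_n$ defined over $A/I^n$ then, via the full faithfulness already proved applied to the Noetherian $I$-adically complete ring $R\otimes_A R$, gives a descent datum on $\D_R$; as $\B(G,\mu)$ is an fpqc (hence étale) stack, this descends to the desired $\D$ over $A$.

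\smallskip

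The main obstacle is ensuring that a single étale cover $R\to A$ suffices to make $\D_n|_{R_n}$ banal for every $n$, rather than a sequence of covers growing with $n$. This uniform compatible lifting through the Witt-vector tower $\{W(R_n)\}$ is the principal technical point, and rests on the formal smoothness of $G$ and $X_\mu$ over $\Z_p$ (to kill the obstructions) together with the square-zero nature of each kernel $\ker(W(R_n)\to W(R_{n-1}))$ and the Henselian invariance of the étale site.
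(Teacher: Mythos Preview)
Your full faithfulness argument is correct and matches the paper's.

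For essential surjectivity your route diverges from the paper's, and it contains a genuine gap. You claim that full faithfulness may be applied to the ``Noetherian $I$-adically complete ring $R\otimes_A R$'' to produce the descent datum. But $R\otimes_A R$ is \emph{not} $I$-adically complete in general. An \'etale $A$-algebra of finite type is Noetherian, yet typically fails to be $I$-complete; replacing $R$ by its $I$-adic completion does not help, since $\hat R\otimes_A\hat R$ is again not complete. Thus the compatible isomorphisms over $(R\otimes_A R)/I^n$ only assemble to an isomorphism over the completion $\widehat{R\otimes_A R}$, and for an \emph{affine} Isom scheme there is no mechanism to descend this to $R\otimes_A R$ itself. (The cocycle condition over $R\otimes_A R\otimes_A R$ would face the same obstruction.) The missing ingredient is exactly a properness input: the reduction of structure from $L^+G$ to $H^\mu$ is governed by the projective variety $X_\mu=G/P_\mu$, and algebraizing this part cannot be done by purely affine limit arguments.

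The paper's proof avoids this problem by working directly over $A$ rather than passing to a cover. It first algebraizes the $L^+G$-torsor $P$ using Lemma~\ref{torsor03}(b) and Remark~\ref{torsor04}(i). Then, reinterpreting $Q$ via \ref{torsordatum} as a section of the proper $A$-scheme $P/H^\mu$, it invokes Grothendieck's algebraization theorem to produce that section from the compatible $\alpha_n$. Finally $u$ is obtained from $\{u_n\}$ by the affineness of the relevant Isom scheme. The key point is that the proper/projective nature of $X_\mu$ is used explicitly via formal GAGA, precisely where your descent argument breaks down.
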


\begin{proof}
The full faithfulness of the functor $\D\mapsto \{\D\times_A A/I^n\}_n$ follows easily 
by using Lemma \ref{diagonal} (b). Let us show essential surjectivity: Consider a compatible 
sequence of $(G, \mu)$-displays $\D_n=(P_n, Q_n, u_n)$ over $A/I^n$; we would like to construct $\D=(P, Q,  u)$ over $A$. Using Lemma \ref{torsor03} (b) 
and Remark \ref{torsor04} (i) we can construct a $L^+G$-torsor $P$ over $A$ with compatible 
isomorphisms $P\times_A A_n\simeq P_n$. To give the $H^\mu$-torsor $Q$ over $A$
we use Remark \ref{torsordatum} and apply Grothendieck's algebraization theorem 
to the proper morphism $P/H^\mu\to \Spec(A)$. Finally, the homomorphism $u: Q\to P$ is given 
from $\{u_n\}_n$ using that $\underline{\rm Isom}_{L^+G}(Q\times_{H^\mu, \Phi_{G,\mu}} L^+G, P)$
is affine (see also the proof of \ref{diagonal} (b) above). 
\end{proof}

\subsubsection{}
Suppose that $\mu': \g_{m\,W(k_0)}\to G_{W(k_0)}$ is conjugate to $\mu$, i.e. 
$\mu'={\rm Int}(g)\circ\mu$, for $g\in G(W(k_0))$. Then ${\rm Int}(g): H^\mu\xrightarrow{\sim} H^{\mu'}$
and 
$$
\Phi_{G,\mu'}={\rm Int}({^Fg})\cdot \Phi_{G,\mu}\cdot{\rm Int}(g)^{-1}.
$$
Using this, we see that conjugation by $g$ gives an isomorphism $\B(G,\mu)\xrightarrow{\sim}\B(G,\mu')$, cf. \cite[\S 3.3.2]{E7}.

\subsubsection{}\label{twopairs} Suppose that $(G_i,\mu_i)$, $i=1$, $2$, are two pairs as 
above and that we are given a group scheme homomorphism $\rho: G_1\to G_2$   such that 
$\mu_2=\rho(\mu_1)$. We then write $\rho: (G_1,\mu_1)\to (G_2,\mu_2)$. Then 
$\rho$ induces group scheme homomorphisms $\rho: H^{\mu_1}_1\to H^{\mu_2}_2$ 
and $\rho:L^+G_1\to L^+G_2$
and we have
$
\rho\cdot \Phi_{G_1,\mu_1}=\Phi_{G_2,\mu_2}\cdot \rho$. Using this we obtain a morphism 
$$
\rho_*: \B(G_1,\mu_1)\to \B(G_2,\mu_2).
$$ (cf. \cite[\S 3.3.1]{E7}).

\subsubsection{}\label{displaysAlgCl} Let us discuss $(G,\mu)$-displays over the algebraically closed field $k$. 
Since $I(k)=VW(k)=pW(k)$, we have
\[
H^\mu(k)=\mu^{-1}(p)G(W)\mu(p)\cap G(W)
\]
and 
\[
\Phi_{G, \mu}(h)=F(\mu(p)h\mu^{-1}(p))=\mu^\sigma(p)\sigma(h)\mu^\sigma(p)^{-1}.
\]
\begin{proposition}\label{displaysoverk}
The set of isomorphism classes of $(G,\mu)$-displays over  $k$ is in 
$1$-to-$1$ correspondence with the quotient
\[
G(W)\mu^\sigma(p)G(W)/_{_\sigma}G(W)
\]
by $\sigma$-conjugation.
\end{proposition}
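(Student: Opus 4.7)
The plan is to reduce to a concrete description of banal displays, and then match the resulting quotient with the stated double-coset set via right multiplication by $\mu^\sigma(p)$.

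First, I show that every $(G,\mu)$-display over $k$ is banal. Since $H^\mu$ is a formally smooth group scheme over $W(k_0)$---being the preimage of the smooth subscheme $P_\mu\subset G$ under the formally smooth map $w_0\colon L^+G\to G$ from Proposition \ref{greenbergprop}(d)---Corollary \ref{torsor03} shows every fpqc $H^\mu$-torsor over $\Spec k$ is \'etale-locally trivial, hence trivial over the algebraically closed field $k$. Combined with the banal description preceding Lemma \ref{diagonal}, this identifies isomorphism classes of $(G,\mu)$-displays over $k$ with the orbit set $G(W)/_{\Phi_{G,\mu}}H^\mu(k)$.

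Next I use the formulas from \ref{displaysAlgCl}: since $W$ is $p$-torsion free and the Witt-vector Frobenius $F$ restricts to $\sigma$ on $W$, one has $\Phi_{G,\mu}(h)=\mu^\sigma(p)\sigma(h)\mu^\sigma(p)^{-1}$ for $h\in H^\mu(k)$. A direct calculation then shows that the right-translation bijection $U\mapsto U\mu^\sigma(p)$, $G(W)\xrightarrow{\sim} G(W)\mu^\sigma(p)$, intertwines the $\Phi_{G,\mu}$-conjugation action $U\mapsto h^{-1}U\Phi_{G,\mu}(h)$ with ordinary $\sigma$-conjugation $b\mapsto h^{-1}b\sigma(h)$ by $H^\mu(k)$. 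Composing with the inclusion $G(W)\mu^\sigma(p)\hookrightarrow G(W)\mu^\sigma(p)G(W)$ gives a well-defined map
\[
\Psi\colon G(W)/_{\Phi_{G,\mu}}H^\mu(k)\longrightarrow G(W)\mu^\sigma(p)G(W)/_\sigma G(W),
\]
and I claim $\Psi$ is a bijection. For surjectivity, any $b=g_1\mu^\sigma(p)g_2$ in $G(W)\mu^\sigma(p)G(W)$ becomes $\sigma^{-1}(g_2)g_1\cdot\mu^\sigma(p)\in G(W)\mu^\sigma(p)$ after $\sigma$-conjugation by $j=\sigma^{-1}(g_2^{-1})$.

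The heart of the argument is injectivity. Suppose $U_2\mu^\sigma(p)=j^{-1}U_1\mu^\sigma(p)\sigma(j)$ for some $j\in G(W)$. Rearranging and using $U_1,U_2,j\in G(W)$ forces $\mu^\sigma(p)\sigma(j)\mu^\sigma(p)^{-1}=U_1^{-1}jU_2\in G(W)$. Everything then rests on the key claim
\[
G(W)\cap\mu^\sigma(p)\,G(W)\,\mu^\sigma(p)^{-1}=H^{\mu^\sigma}(k),
\]
which applied to $\sigma(j)$ gives $\sigma(j)\in H^{\mu^\sigma}(k)$; since $\sigma$ induces a bijection $P_\mu(k)\xrightarrow{\sim} P_{\mu^\sigma}(k)$ this is equivalent to $j\in H^\mu(k)$. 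Substituting back turns the original identity into $U_2=j^{-1}U_1\Phi_{G,\mu}(j)$, the very condition that $U_1$ and $U_2$ represent isomorphic banal displays.

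The main obstacle is the key claim, and it is where the minusculity of $\mu$ enters. Inclusion ``$\supseteq$'' is immediate from Proposition \ref{productI} applied to $(G,\mu^\sigma)$: writing $y\in H^{\mu^\sigma}(k)$ as $y'y''$ with $y'\in P_{\mu^\sigma}(W)$ and $y''\in L^{>0}U_{\mu^{-\sigma}}(k)=(pW)^r$ under the minusculity isomorphism $U_{\mu^{-\sigma}}\simeq\g_a^r$, the conjugation $\Int_{\mu^\sigma}(p)$ preserves $P_{\mu^\sigma}(W)$ (nonnegative weights) and multiplies the coordinates of $y''$ by $p^{-1}$, sending $(pW)^r$ into $W^r$. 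The reverse inclusion is a standard structural result: the intersection $G(W)\cap\mu^\sigma(p)G(W)\mu^\sigma(p)^{-1}$ is the parahoric subgroup of $G(L)$ attached to the facet spanned by the hyperspecial origin and the vertex $\mu^\sigma(p)\cdot o$ in the Bruhat--Tits building, and for minuscule $\mu^\sigma$ it equals the preimage of $P_{\mu^\sigma}(k)$ under $G(W)\twoheadrightarrow G(k)$, namely $H^{\mu^\sigma}(k)$. Alternatively one argues directly: $\Ad(\mu^\sigma(p))$ scales the weight-$w$ piece of $\frakg$ by $p^w$ with $w\in\{-1,0,+1\}$ by minusculity, so in any open-cell decomposition $y=u^-\cdot p^+$ the component $u^-\in U_{\mu^{-\sigma}}(W)$ must reduce to $1$ modulo $p$, i.e.\ $y_0\in P_{\mu^\sigma}(k)$.
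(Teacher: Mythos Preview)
Your argument is essentially the paper's own proof: trivialize the torsors, pass from $U$ to $U\mu^\sigma(p)$ to turn $\Phi_{G,\mu}$-conjugation into $\sigma$-conjugation, and show that the resulting map to $G(W)\mu^\sigma(p)G(W)/_\sigma G(W)$ is bijective. The surjectivity and injectivity steps match the paper's almost line for line.

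There is, however, a sign slip in your ``key claim.'' From $\mu^\sigma(p)\,\sigma(j)\,\mu^\sigma(p)^{-1}\in G(W)$ you get $\sigma(j)\in \mu^\sigma(p)^{-1}G(W)\mu^\sigma(p)$, so the identity you need is
\[
H^{\mu^\sigma}(k)\;=\;G(W)\cap \mu^\sigma(p)^{-1}G(W)\mu^\sigma(p),
\]
not $G(W)\cap \mu^\sigma(p)G(W)\mu^\sigma(p)^{-1}$ (the latter equals $H^{(\mu^\sigma)^{-1}}(k)$). Your own ``$\supseteq$'' computation in fact verifies the correct version: you show $\mu^\sigma(p)\,y\,\mu^\sigma(p)^{-1}\in G(W)$ for $y\in H^{\mu^\sigma}(k)$, i.e.\ $y\in \mu^\sigma(p)^{-1}G(W)\mu^\sigma(p)$. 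The paper records exactly this identity (for $\mu$ rather than $\mu^\sigma$) in the paragraph immediately preceding the proposition, so you can simply cite it rather than rederive it. Your direct big-cell argument for ``$\subseteq$'' is also incomplete as written, since an arbitrary $y\in G(W)$ need not lie in the open cell $P_{\mu^\sigma}\cdot U_{(\mu^\sigma)^{-1}}$; the clean route is the one you mention first, via Proposition~\ref{productI}, or simply to invoke the identity already stated in \S\ref{displaysAlgCl}.
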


\begin{proof}
Since both torsors $Q$ and $P$ are  trivial, a $(G, \mu)$-display $\D=(P, Q, u)$ over $k$ is given by $u\in G(W)$. The action by $\Phi_{G,\mu}$-conjugation is given by
\[
u\mapsto h^{-1}\cdot u\cdot \Phi_{G,\mu}(h)= h^{-1}u\mu^\sigma(p)\sigma(h)\mu^\sigma(p)^{-1}.
\]
If we set $g=u\mu^\sigma(p)$, we see that this action replaces $g$ by $h^{-1}g\sigma(h)$. Therefore,
the isomorphism classes of $(G,\mu)$-displays over $k$ are in 
$1$-to-$1$ correspondence with the quotient
\[
G(W)\mu^\sigma(p)/_{_\sigma}  (\mu(p)^{-1}G(W)\mu(p)\cap G(W))
\]
by $\sigma$-conjugation. Consider the natural map
\[
G(W)\mu^\sigma(p)/_{_\sigma}(\mu(p)^{-1}G(W)\mu(p)\cap G(W))\to G(W)\mu^\sigma(p)G(W)/_{_\sigma}G(W).
\]
If $g\mu^\sigma(p)=h^{-1}g'\mu^\sigma(p)\sigma(h)$ for $h\in G(W)$, then 
$g'^{-1}hg=\mu^\sigma(p)\sigma(h) \mu^\sigma(p)^{-1}$. This implies 
$\sigma(h)\in \mu^\sigma(p)^{-1}G(W)\mu^\sigma(p)$, so $h\in  \mu(p)^{-1}G(W)\mu(p)\cap G(W)$.
This shows that the above map is injective. The map is also surjective: consider $u_1\mu^\sigma(p)u_2$
and write $u_2=\sigma(h)^{-1}$, for $h\in G(W)$. Then we have
\[
h^{-1}u_1\mu^\sigma(p)u_2\sigma(h)\in G(W)\mu^\sigma(p)
\]
which shows $u_1\mu^\sigma(p)u_2$ is in the image.
\end{proof}

\subsubsection{} Notice that the above proof also shows that, if $\D$ is a $(G,\mu)$-display over $k$ given by $u\in G(W)$,
then the $\sigma$-conjugacy class of $b:=u\mu^\sigma(p)\in G(W[1/p])$ only depends 
on the isomorphism class of $\D$. The pair $(\mu, b)$ gives a filtered   
$G$-isocrystal over $W[1/p]$ as considered in \cite[Chapter 1]{RapZinkBook}.

We denote by $\mathbf D$ the algebraic torus over $\q_p$ whose character group
is $\q$. Kottwitz \cite{KottIso} associates to the Frobenius conjugacy class $b$ a morphism of algebraic groups defined over $K:=W[1/p]$
\[
\nu_b : {\mathbf D}_{K}\to G_{K}
\]
called the slope morphism. 

If $\rho: G\to \GL(V)$ is a $\q_p$-rational representation of $G$, the morphism $\rho\cdot \nu_b$
defines a $\q$-grading on the vector space $V\otimes_{\q_p} K$. The morphism $\nu_b$ is characterized by the property that this grading is the slope 
decomposition of the isocrystal over $K$ associated to $b$ and $V$. The rational number $\lambda$ is called a slope of $\rho\cdot\nu_b$ if the corresponding 
isotypic component of $V\otimes_{\q_p} K$ is not equal to zero. In particular, we can consider the adjoint representation $\rho={\rm Ad}^G: G\to \GL({\rm Lie}(G))$ 
and the slopes of ${\rm Ad}^G(b)$.

\subsection{Quasi-isogenies}

\subsubsection{}  Suppose that $S$ is a  scheme over $\Spec({\mathbb F}_p)$ and denote by $F: S\to S$ the absolute
Frobenius. Recall that if $Y\to S$ is an $S$-scheme (or Ind-scheme)
we set
\[
^F Y=Y\times_{S, F}S.
\]
Since $G$ is defined over $\Z_p$ we have a natural isomorphism $^FLG\xrightarrow{\simeq} LG$.
\begin{definition}
Suppose that $S$ is a  scheme over $\Spec({\mathbb F}_p)$. 

A $G$-isodisplay over $S$ is a pair $(T, \phi)$ of a $LG$-torsor $T$ over $S$
and an isomorphism $\phi: {}^F T\xrightarrow{\sim} T$. 

An (iso)morphism
$\alpha: (T_1, \phi_1)\xrightarrow {\sim}  (T_2, \phi_2)$ of two $G$-isodisplays over $S$
is an isomorphism of $LG$-torsors
$\alpha: T_1\xrightarrow {\sim} T_2$
which is compatible with $\phi_1$, $\phi_2$, in the sense that:
$ 
\alpha\cdot \phi_1=\phi_2\cdot {}^F\alpha.
$ 
\end{definition}

\subsubsection{}\label{Gisodisplay} 

Let us now show how to associate a $G$-isodisplay 
\[
\D[1/p]=(P[1/p], \phi)
\]
 to a $(G, \mu)$-display $\D=(P, Q, u)$ over $S\in {\rm Nilp}_{W(k_0)}$.

We first assume $p\cdot \O_S=0$. If $P$ is a $L^+G$-torsor over $S$ in ${\rm Nilp}_{W(k_0)}$, 
then we set 
\[
P[1/p]:=P\times_{L^+G}LG 
\]
for the $LG$-torsor over $S$ obtained 
from $P$ using the natural $L^+G\to LG$. Then if $(P, Q, u)$ is a $(G, \mu)$-display over $S$,
we construct an isomorphism
\[
\phi:\ ^FP[1/p] \xrightarrow{\sim} P[1/p]
\]
of $LG$-torsors over $S$. We can assume that $S$ is affine $S=\Spec(R)$;
the general case can then be handled by descent as below.

Suppose first that $Q$ is the trivial $H^\mu$-torsor and choose a trivialization 
of $Q$ so that $u$ is given by $U\in G(W(R))$.
Both $P[1/p]$ and ${}^FP[1/p]$ are then identified with the trivial $LG$-torsor.
Define
$
\phi:  {}^FP[1/p]\xrightarrow{\sim}  P[1/p]
$
to be the morphism given by left multiplication by 
$U\mu^\sigma(p)\in G(W(R)[1/p])=LG( R)$:
\[
\phi(x)=U\mu^\sigma(p)\cdot x.
\]
After changing the trivialization of $Q$
by right multiplication by $h\in H^\mu(R)$, $U$ changes to 
$U'=h^{-1}U\Phi_{G,\mu}(h)$, and the morphism $\phi$ now is given by multiplication by
$$
U'\mu^\sigma(p)=h^{-1}U\Phi_{G,\mu}(h)\mu^\sigma(p)=
$$
$$=
h^{-1} UF(\mu(p) h\mu(p)^{-1})\mu^\sigma(p)=h^{-1}U\mu^\sigma(p)F(h).
$$
This shows that $\phi: {}^FP[1/p]\xrightarrow{\sim} P[1/p]$ is independent of our choice of trivialization of the $H$-torsor $Q$. We can now easily see using descent that this construction
also produces $\phi: {}^FP[1/p]\xrightarrow{\sim} P[1/p]$ in general.

For $S$ in ${\rm Nilp}_{W(k_0)}$, set $\bar S=S\otimes_{W(k_0)}k_0$ for its special fiber.

\begin{definition}
If $\D$ is a $(G,\mu)$-display over $S$ in ${\rm Nilp}_{W(k_0)}$ we 
set $\D[1/p]=(P[1/p],\phi)$ to be the $G$-isodisplay which is associated to its special
fiber $\D\times_{S}\bar S$, by the above construction.
\end{definition}

  Notice that for $S$ in ${\rm Nilp}_{W(k_0)}$,  we have
\[
(P\times_{L^+G}LG)\times_{S}\bar S=(P\times_{S}\bar S)[1/p].
\]
This is because, if $S=\Spec(R)$, we have $W(R)[1/p]=W(R/pR)[1/p]$, since $p^mR=0$
implies $p^mW(pR)=0$.
 
\begin{definition}
A $G$-quasi-isogeny 
\[
\alpha: \D_1=(P_1, Q_1, u_1)\dashrightarrow \D_2=(P_2, Q_2, u_2)
\]
between two $(G, \mu)$-displays over $S$ in ${\rm Nilp}_{W(k_0)}$ is  an isomorphism
\[
\alpha: (P_1[1/p],\phi_1)\xrightarrow{\sim} (P_2[1/p],\phi_2)
\] between their corresponding $G$-isodisplays,
i.e. an isomorphism of $LG$-torsors
$
\alpha: P_1[1/p]\xrightarrow {\sim} P_2[1/p]
$
which is compatible with $\phi_1$, $\phi_2$, in the sense that 
$ 
\alpha\cdot \phi_1=\phi_2\cdot {}^F\alpha.
$ 
\end{definition}

\begin{remark}\label{glnQiso}
{\rm Recall (Example \ref{glnexample}) that there is an equivalence between the stack of
$(\GL_h, \mu_{d,h})$-displays, and  the stack of Zink (not-necessarily-nilpotent) displays of rank $h$ and 
dimension $d$, over ${\rm Nilp}_{W(k_0)}$. It follows from \cite[Prop. 66]{Zinkdisplay} that, under this equivalence,
  $\GL_n$-quasi-isogenies as defined here, bijectively correspond to quasi-isogenies 
between displays as considered in loc. cit.

  }
\end{remark}
\bigskip

\subsection{The adjoint nilpotent condition}

\subsubsection{}
Suppose that $\D$ is a $(G,\mu)$-display over a $W(k_0)$-scheme $S$.
Let $x$ be a point of $\bar S=S\otimes_{W(k_0)}k_0$ and let ${k(x)}^{\rm ac}$ be an algebraic closure of the residue field $k(x)$.
Set $L(x)=W(k(x)^{\rm ac})[1/p]$. By \ref{displaysAlgCl} applied to $k=k(x)^{\rm ac}$, we see that $\D\times_S k(x)^{\rm ac}$ is described 
by an element $u:=u(x)\in G(W(k(x)^{\rm ac}))$ and that the Frobenius conjugacy class of $b(x)=u(x)\mu^\sigma(p)$ in $G(L(x))$ is well-defined. 
 
\begin{definition}
We will say that the $(G,\mu)$-display $\D$ over $S$ is adjoint nilpotent if, for all $x\in \bar S$,
all the slopes of ${\rm Ad}^G(b(x))\in \GL({\rm Lie}(G))(L(x))$ 
 are $> -1$. We denote by $\B^{\rm nil}(G,\mu)$ the full substack 
 of $\B(G, \mu)$ given by adjoint nilpotent $(G,\mu)$-displays.
\end{definition}

\subsubsection{} We continue with the above assumptions and notations. For every $x\in \bar S$,
consider  the  isocrystal given by the Frobenius semilinear $\phi(x):={\rm Ad}^G(b(x))\circ F_{k(x)^{\rm ac}}$ acting on 
$ \GL({\rm Lie}(G))(L(x))$.
 The Hodge weights 
of this   isocrystal with respect to 
the lattice $\mathcal L_x=\frakg_{W(k(x)^{\rm ac})}={\rm Lie}(G)_{W(k(x)^{\rm ac})}\subset {\rm Lie}(G)_{L(x)}$ are just the weights of $\mu^{-1}$ on $\frakg_{W(k(x)^{\rm ac})}$. Since $\mu$ is 
minuscule these weights lie in the set $\{-1,0,1\}$. Hence, the Newton slopes of the isocrystal are all greater than or equal to $-1$, 
by the Hodge-Newton inequality.
We then obtain that $\D$ is adjoint nilpotent if and only if for all $x\in \bar S$, we have $\phi(x)^{r}\mathcal L_x\subset p^{1-r}\mathcal L_x$, 
with $r={\rm dim}_{\Z_p}(\frakg)$.  
Using this, we see that  we also have (cf. \cite[Definition 3.23]{E7}):

\begin{lemma}
\label{stressful03}
Suppose $\pi$ is the projector on $ W(k_0)\otimes_{\Z_p}{\rm Lie}(G)$ with $\im(\pi)=\Lie U_{\mu^{-1}}$ and 
$\ker(\pi)=\Lie P_\mu$. Let $\D$ be a banal $(G,\mu)$-display over a $k_0$-algebra $R$ and let $U\in G(W(R))$ be a 
representative for it. Then $\D$ is adjoint nilpotent if and only if the endomorphism on $R\otimes_{\z_p}\Lie(G)$ defined by 
$${\rm Ad}^G(w_0(U))\circ(F_R\otimes{\rm id}_{\Lie G})\circ({\rm id}_R\otimes\pi)$$
is nilpotent. 
\end{lemma}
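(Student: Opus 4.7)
The plan is to identify the endomorphism in the statement with the reduction modulo $I(R)$ of a natural ``divided'' Frobenius on the integral lattice $M:=W(R)\otimes_{W(k_0)}\frakg_{W(k_0)}$, and then translate the adjoint nilpotent condition into its nilpotence via the reformulation recalled immediately before the lemma.

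First I would consider the $F$-semilinear operator $\phi:=\Ad(U\mu^\sigma(p))\circ F$, which a priori is defined only on $M[1/p]$. Using the weight decomposition $\frakg_{W(k_0)}=\frakp\oplus\fraku^-$ for $\mu$ and the fact that $F$ sends the $\mu$-weight-$n$ subspace into the $\mu^\sigma$-weight-$n$ subspace, one sees that $\Ad(\mu^\sigma(p))\circ F$ scales by $p^{-1}$ on $\fraku^-_{W(R)}$ and by $1$ or $p$ on $\frakp_{W(R)}$, because $\mu$ is minuscule. Hence $p\phi$ actually preserves the lattice $M$: on $\fraku^-_{W(R)}$ it agrees with $\Ad(U)\circ F$, while on $\frakp_{W(R)}$ it takes values in $pM$. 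Since $R$ is a $k_0$-algebra we have $p\in I(R)$, so $pM\subset I(R)M$, and $p\phi$ descends to an $F_R$-semilinear endomorphism $\overline{p\phi}$ of $M/I(R)M\cong R\otimes_{\Z_p}\Lie G$ which kills the $\frakp$-summand and equals $\Ad(w_0(U))\circ(F_R\otimes\id)$ on the $\fraku^-$-summand. This is precisely the operator $T:=\Ad^G(w_0(U))\circ(F_R\otimes\id_{\Lie G})\circ(\id_R\otimes\pi)$ of the statement.

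By the argument recalled just before the lemma, $\D$ is adjoint nilpotent iff $\phi(x)^r\mathcal L_x\subset p^{1-r}\mathcal L_x$ for every $x\in\bar S$, where $r=\dim_{\Z_p}\frakg$. Equivalently, $(p\phi(x))^r\mathcal L_x\subset p\mathcal L_x$, i.e.\ $\overline{p\phi(x)}^{\,r}=0$ on $\mathcal L_x/p\mathcal L_x$. At a geometric point $x$ the ring $W(k(x)^{\rm ac})$ is perfect, whence $I(W(k(x)^{\rm ac}))=pW(k(x)^{\rm ac})$, so the pullback of $(M/I(R)M,\overline{p\phi})$ to $x$ is exactly $(\mathcal L_x/p\mathcal L_x,\overline{p\phi(x)})$, i.e.\ the fiber $T_x$ of $T$. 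Since a semilinear endomorphism of a vector space of dimension at most $r$ is nilpotent iff its $r$-th iterate vanishes (the image filtration strictly decreases until it reaches zero), adjoint nilpotence is equivalent to nilpotence of $T_x$ at every geometric point $x\to\Spec R$.

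It remains to upgrade this pointwise nilpotence to nilpotence of $T$ itself. The forward direction is base change. For the converse, fix a basis of $\Lie G$ and let $A\in M_r(R)$ be the matrix of the $\sigma^r$-semilinear endomorphism $T^r$. By hypothesis every entry $A_{ij}$ vanishes at all geometric points of $\Spec R$, hence lies in the nilradical of $R$; since there are only finitely many entries, there is a uniform exponent $\ell$ with $A_{ij}^{\ell}=0$ for all $i,j$. Because $R$ has characteristic $p$, the matrix of $T^{rN}$ equals
\[
A\cdot\sigma^r(A)\cdot\sigma^{2r}(A)\cdots\sigma^{(N-1)r}(A),
\]
with $\sigma^{rk}(A)_{ij}=A_{ij}^{p^{rk}}$. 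Choosing $N$ so that $p^{r(N-1)}\geq \ell$ makes the final factor vanish identically, hence $T^{rN}=0$. The passage from pointwise to global nilpotence in characteristic $p$ is the main technical point; everything else is a straightforward unravelling of definitions in light of the Hodge-Newton-type bound already established.
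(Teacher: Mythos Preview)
Your argument is correct and follows essentially the same route as the paper's proof: identify the operator in the statement with the reduction $\overline{p\phi}$ of $p\,\Ad(U\mu^\sigma(p))\circ F$ modulo $I(R)$, use the Hodge--Newton bound recorded just before the lemma to translate adjoint nilpotence at each geometric point into $(p\phi(x))^r\mathcal L_x\subset p\mathcal L_x$, and then pass from pointwise to global nilpotence. The paper compresses all of this into two sentences, leaving the last step (``the case of general $R$ follows from this'') entirely to the reader; your explicit nilradical-and-Frobenius argument for that step is exactly the intended filling-in, and your observation that the semilinear image filtration strictly decreases (so that nilpotent implies $r$-th power zero over each $k(x)^{\rm ac}$) makes precise a point the paper takes for granted.
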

\begin{proof}
When $R=k$ is an algebraically closed field the condition that the endomorphism 
in the statement is nilpotent is equivalent to $$(\phi\cdot p)^r ({\rm Lie}(G)_{W(k)}) \subset p({\rm Lie}(G)_{W(k)}),$$
where $\phi:={\rm Ad}^G(U\mu^{\sigma}(p))\circ F_{k}$; given the above discussion this gives the result in this case. The case of general $R$ follows from this.
\end{proof}

\begin{remark}
\label{stressful01} Suppose that $(G, \mu)=(\GL_h, \mu_{d,h})$. Recall that, by Example \ref{glnexample},
there is an equivalence between 
$\B(\GL_h, \mu_{d,h})$, i.e. the stack of $(\GL_h, \mu_{d, h})$-displays, and the stack of (Zink, not-necessarily-nilpotent) displays of rank $h$ and 
dimension $d$. 

Suppose that $R$
is in ${\rm ANilp}_{W(k_0)}$  and that $\D$ is a  $(\GL_h, \mu_{d, h})$-display
over $R$.
The adjoint nilpotence condition on $\D$ (as defined above) 
is related to Zink's nilpotence condition (\cite[Definition 11/Definition 13]{Zinkdisplay}) on the corresponding Zink display as follows: 

  First assume that $R=k$ is
an  algebraically closed field and that the display $\D$ is given by $b=u\mu_{d,h}(p)\in \GL_h(W(k)[1/p])$: Then we observe that 
 the   isocrystal on ${\rm Lie}(\GL_h)_{W(k)[1/p]}$ given by ${\rm Ad}^G(b)$ has a non-trivial isotypic component of slope $-1$ if and only if the  isocrystal given by $b$ has non-trivial isotypical components for both 
slopes $0$ and $1$. (This is obtained by a simple consideration of roots and weights for $\GL_h$.) For a general $R$ as above, this now implies that $\D$ is adjoint nilpotent
if and only if there exist radical $R$-ideals
$I_{\rm nil}$ and $I_{\rm uni}$ with 
$I_{\rm nil}\cap I_{\rm uni}=\sqrt{pR}$, 
such that both the Zink display $\D_{R/I_{\rm nil}}$ and the dual $(\D_{R/I_{\rm uni}})^t$ satisfy Zink's nilpotence condition. In particular, if either $\D_{R/pR}$ or $(\D_{R/pR})^t$
satisfy Zink's nilpotence condition then $\D$ is adjoint nilpotent. 
\end{remark}

\subsection{Liftings and deformation theory}
\label{deform04} 

In this subsection, we describe the deformation theory of $(G,\mu)$-displays
which satisfy the adjoint nilpotence condition. Many of our results are group theoretic
versions of corresponding results of Zink (\cite{Zinkdisplay}) about deformations of nilpotent displays.

\subsubsection{}

Let $A$ be a $p$-adically separated and complete $W(k_0)$-algebra and fix 
a $p$-adically closed ideal $\fraka\subset A$ with divided powers
which are compatible with the natural divided powers on $pW(k_0)$.
In addition, let us assume that $\fraka$ is $p$-adically topologically nilpotent. Then $A$ is also complete
for the $\fraka$-adic topology. (The most useful case is when $A$ is in ${\rm ANilp}_{W(k_0)}$
and $\fraka\subset A$ is a nilpotent pd-ideal.)

We set 
$I_\fraka(A):=W(\fraka)+I(A)$, which is a pd-ideal of $W(A)$.
In this situation, Zink's logarithmic ghost coordinates 
(\cite[\S 1.4]{Zinkdisplay}) establish an 
isomorphism 
\[
\log: W(\fraka)\xrightarrow{\simeq}\prod_{i=0}^\infty\fraka
\]
 leading to 
splittings $W(\fraka)=\fraka\oplus I(\fraka)$ and 
$I_\fraka(A)=\fraka\oplus I(A)$. We will write the elements of 
$\prod_{i=0}^\infty\fraka$ using brackets $[a_0,a_1,\ldots ]$. 
We have $F([a_0, a_1, \ldots ])=[pa_1, pa_2, \ldots]$, so $F(\fraka)=0$.
The splittings allow one to 
define the important $F$-linear extension $V_\fraka^{-1}:
I_\fraka(A)\rightarrow W(A)$ of $V^{-1}=V_\fraka^{-1}|_{I(A)}$ by setting 
$0=V_\fraka^{-1}|_{\fraka}$ (see \cite[Lemma 38]{Zinkdisplay}). 

\subsubsection{} Now let  $(G,\mu)$ be as above. We set
$  H^\mu(A,\fraka)$ to be the inverse image of $H^\mu(A/\fraka)$ in 
$L^+G(A)$. Using an argument as in the proof of Proposition \ref{productI}
we deduce that 
\[
  H^\mu(A,\fraka)=U_{\mu^{-1}}(W(\fraka))H^\mu(A).
\]
We obtain further decompositions 
\[
 H^\mu(A,\fraka)=U_{\mu^{-1}}(\fraka)H^\mu(A)=G(\fraka)H^\mu(A), 
\] 
where $G(\fraka) ={\rm ker}(G(A)\to G(A/\fraka))$ and similarly for $U_{\mu^{-1}}(\fraka)$. We can now see that  $\Phi_{G,\mu}(A):H^\mu(A)\rightarrow L^+G(A)$   can be extended to a map 
\[
\Psi_\fraka:  H^\mu(A,\fraka)\rightarrow L^+G(A)
\]
vanishing on $G(\fraka)$. (Notice that $\Phi_{G,\mu}$ takes the identity value on $P_\mu(\fraka)=H^\mu(A)\cap G(\fraka)$, cf. \cite[Cor. 3.11]{E7}.) We similarly get 
\[
\Psi_\fraka(G(W(\fraka))\subset G(W(\fraka)).
\]

 \subsubsection{} Recall the weight decomposition 
\[
W(k_0)\otimes_{\z_p}\frakg=\frakp\oplus\fraku^-
\] 
 induced by our minuscule 
cocharacter $\mu$.  
We can also construct Lie-theoretic analogs of the group $H^\mu(A,\fraka)$ and the homomorphism $\Psi_\fraka$. Namely, we set
\begin{equation}
 \frakh_\fraka^\mu(A)=(I_\fraka(A)\otimes_{W(k_0)}\fraku^-)\oplus (W(A)\otimes_{W(k_0)}\frakp),
\end{equation}
and we define 
\begin{equation}
\label{blabla}
\psi_\fraka: \frakh_\fraka^\mu(A)\rightarrow W(A)\otimes_{\z_p}\frakg
\end{equation}
 to be $V_\fraka^{-1}$ on $I_\fraka(A)\otimes_{W(k_0)}\fraku^-$, and   $p^mF$ 
on all summands of $\mu$-weight $m\geq0$.

\begin{theorem}
\label{GMZCF}
Under the above assumptions on $\fraka\subset A$, and with the above notations, suppose in addition
that we have $U$, $U'\in L^+G(A)=G(W(A))$ that satisfy the adjoint nilpotence condition and are such that
\[
U'\equiv U\ {\rm mod}\ W(\fraka).
 \] 
Then there is a unique 
element $h\in G(W(\fraka))$ such that $
U'=h^{-1}U\Psi_\fraka(h)$.
\end{theorem}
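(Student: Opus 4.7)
The plan is to solve for $h$ as a fixed point of the map $T : G(W(\fraka)) \to G(W(\fraka))$ defined by $T(h) = U\cdot \Psi_\fraka(h)\cdot U'^{-1}$. Indeed, the equation $U' = h^{-1} U \Psi_\fraka(h)$ is equivalent to $h = T(h)$, and because $\Psi_\fraka$ preserves $G(W(\fraka))$ and $UU'^{-1}\in G(W(\fraka))$ (from $U'\equiv U\bmod W(\fraka)$), the map $T$ does send $G(W(\fraka))$ to itself. Starting from $h_0 = 1$, the iterates $h_{n+1} = T(h_n)$ will be Cauchy for a suitable topology on $G(W(\fraka))$ and converge to the desired $h$; uniqueness will follow from the very same contraction argument applied to the quotient of two putative solutions.

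To make the contraction precise I would linearize. The divided powers on $\fraka$ produce an exponential/logarithm that identifies $G(W(\fraka))$ (locally, or through a normal decomposition) with the Lie-theoretic object $W(\fraka)\otimes_{\Z_p}\frakg$, and aligns the $H^\mu$-subgroup structure with $\frakh_\fraka^\mu(A)$. The infinitesimal version of $T$ is then an affine map $y\mapsto c+M(y)$ whose linear part $M$ is, up to conjugation by $U$, the operator $\psi_\fraka$ of \eqref{blabla}. I would then decompose $M$ according to the $\mu$-weight decomposition $W(k_0)\otimes_{\Z_p}\frakg = \frakp\oplus\fraku^{-}$ and analyze its two pieces separately.

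Verifying the contraction is the main obstacle. On the $\mu$-weight $m\geq 0$ summands, $\psi_\fraka$ equals $p^m F$; restricted to $W(\fraka) = \fraka\oplus I(\fraka)$ this already produces a factor of $p$ at every step, using $F(I(\fraka))\subset pW(\fraka)$ for the $I(\fraka)$-part and $F([a])=[a^p]\in [p\fraka]\subset pW(\fraka)$ for the $\fraka$-part (here the divided-power assumption is essential, since it gives $a^p\in p\fraka$). On the $\fraku^{-}$-summand $\psi_\fraka = V_\fraka^{-1}$, which vanishes on $\fraka$ by construction and shifts $I(\fraka)$ down one notch; so iteration drives the image into ever deeper pieces of the divided-power filtration, while the part that is not killed by $V_\fraka^{-1}$ feeds back into $\frakp\oplus\fraku^{-}$ via $\Ad(U)$. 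The adjoint nilpotence is exactly what controls this feedback: by Lemma~\ref{stressful03}, $r = \dim_{\Z_p}\frakg$ consecutive applications of $\Ad(w_0(U))\circ F\circ\pi$ on the $\fraku^{-}$-direction yield an extra factor of $p$. Combining these mechanisms, I expect to show that $T^N$ maps $G(W(\fraka))$ into a coset of $1+(\fraka + pW(A))^N\cdot\frakg$ for $N\to\infty$, uniformly in the starting point.

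Once the contraction is established, the $p$-adic and $\fraka$-adic completeness of $A$ (together with the $p$-adic topological nilpotence of $\fraka$) ensure that the Cauchy sequence $\{h_n\}$ converges to a well-defined $h\in G(W(\fraka))$ solving $h = T(h)$. Uniqueness is then immediate: given two solutions $h_1$, $h_2$, the ratio $g = h_2 h_1^{-1}$ satisfies an analogous fixed-point equation in which $U$ plays the role of both $U$ and $U'$, and the contraction forces $g = 1$.
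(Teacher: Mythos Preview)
Your overall strategy—solve $U'=h^{-1}U\Psi_\fraka(h)$ as a fixed point of $T(h)=U\Psi_\fraka(h)U'^{-1}$, linearize, and use adjoint nilpotence to obtain a contraction—is the same as the paper's. The gap is in your contraction analysis, specifically on the $\fraku^-$-part and in the choice of filtration.

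Your claim that $V_\fraka^{-1}$ ``drives the image into ever deeper pieces of the divided-power filtration'' is not correct: in logarithmic coordinates $V_\fraka^{-1}$ simply shifts $[0,a_1,a_2,\dots]\mapsto[a_1,a_2,\dots]$, which stays in $W(\fraka)$ and does not gain any factor of $p$ or of $\fraka$. Likewise, your target filtration $(\fraka+pW(A))^N$ is not the one that makes the argument go through, and the identification of $G(W(\fraka))$ with $W(\fraka)\otimes\frakg$ via $\exp$ is only valid on square-zero quotients, not globally. So as written your iterates $T^n(1)$ are not shown to be Cauchy.

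The paper repairs all three points with one device: the filtration $\fraka_n=\fraka(pA+\fraka)^n$. These are pd-subideals preserved by $\Psi_\fraka$, the successive quotients $\fraka_{n-1}/\fraka_n$ are square-zero ideals annihilated by $p$ (since $(pA+\fraka)\cdot\fraka_{n-1}=\fraka_n$), and $\fraka$ is complete for this filtration. On each quotient $W(\fraka_{n-1}/\fraka_n)\otimes\frakg$ the exponential is linear, and the crucial observation is that $\psi_{\fraka_{n-1}/\fraka_n}$ kills the entire $\frakp$-summand (because $p=0$ there forces $p^mF=0$ for all $m\ge 0$) and on the $\fraku^-$-summand factors through $\id\otimes\psi^*$ with $\psi^*=F\circ\pi$ on $(A/(pA+\fraka))\otimes\frakg$. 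Thus $\Ad(U)\circ\psi$ is genuinely \emph{nilpotent} on each quotient (not merely ``eventually gaining a factor of $p$''), exactly by Lemma~\ref{stressful03}. One then solves the affine fixed-point equation $K(X)=X$ in finitely many steps on each layer and passes to the limit. Uniqueness follows by the same layer-by-layer nilpotence: if $h\in G(W(\fraka_{n-1}))$ satisfies $h^{-1}U\Psi_\fraka(h)=U$, the induced fixed-point equation on the $n$-th layer has $0$ as its unique solution, so $h\in G(W(\fraka_n))$ for all $n$.
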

\begin{proof}
We endow $G(W(\fraka))={\rm ker}(G(W(A))\to G(W(A/\fraka)))$ with the topology coming from 
the restriction of the $p$-adic topology of $A$, with respect to which 
it is separated and complete. Consider the chain of ideals
$\fraka_n=\fraka(pA+\fraka)^n$ which are pd-subideals of $\fraka$. Then the ideal $\fraka$ is complete and separated 
for the topology given by $(\fraka_n)$.  
Note that $\Psi_\fraka(G(W(\fraka_n))\subset G(W(\fraka_n))$, 
because $\fraka_n$ is a pd-subideal 
of $\fraka$. By an inductive procedure we will construct elements 
$h_n\in G(W(\fraka))$ such that
$$
h_n\equiv h_{n-1}\ {\rm mod}\  G(W(\fraka_{n-1}))
$$
and
$$
h_n^{-1}U\Psi_\fraka(h_n)\equiv U'\ {\rm mod}\  G(W(\fraka_n)):
$$ 
Then $h=\varprojlim_n h_n$ is the required element.
Set $h_0=1$ and for $n\geq 1$ consider 
the element 
$$
U'':=h_{n-1}^{-1}U\Psi_\fraka(h_{n-1}).
$$
By the induction hypothesis we have
$$
U''\equiv U'\ {\rm mod}\  G(W(\fraka_{n-1})).
$$ 
We can define a function $K$  
from 
$$
W(\fraka_{n-1}/\fraka_n)\otimes_{\z_p}\frakg\  \buildrel{\rm exp}\over {\cong}\ G(W(\fraka_{n-1}/\fraka_n))\cong G(W(\fraka_{n-1}))/G(W(\fraka_n)).
$$ 
to itself by setting
$$
K(X):={\rm exp}^{-1}(U''\Psi_\fraka(\widetilde{{\rm exp}(X)}){U'}^{-1}\ {\rm mod}\ G(W(\fraka_{n}))),
$$
where the tilde denotes a lift of ${\rm exp}(X)\in G(W(\fraka_{n-1}/\fraka_n))$ to $G(W(\fraka_{n-1}))$.
For $ Y, X\in W(\fraka_{n-1}/\fraka_n)\otimes_{\z_p}\frakg$, we have
\begin{equation}\label{addK}
K(Y+X)=K(Y)+(\Ad^G(U)\circ\psi_{\fraka_{n-1}/\fraka_n})(X).
\end{equation}
(Here $\psi_{\fraka_{n-1}/\fraka_n}$ is the map of our construction above applied to
the ring $A/\fraka_n$ and its pd ideal $\fraka_{n-1}/\fraka_n$. Note also that since $U$, $U'$ and $U''$ are 
congruent modulo  $W(\fraka_{n-1})$, the maps $\Ad^G(U)$, $\Ad^G(U')$ and $\Ad^G(U'')$
induce the same operator on $W(\fraka_{n-1}/\fraka_n)\otimes_{\Z_p}\frakg$.)

Now observe that $W(\fraka_{n-1}/\fraka_n)\simeq \prod_{i=0}^\infty \fraka_{n-1}/\fraka_n$ can be regarded as a $A/(pA+\fraka)$-module; the module structure on the $i$-th component of the right hand side
is given by $s\cdot a=F^i(s) a$, where $F$ is the absolute Frobenius on $A/(pA+\fraka)$. 
Let
$$
\psi^*:(A/(pA+\fraka))\otimes_{\z_p}\frakg\rightarrow(A/(pA+\fraka))\otimes_{\z_p}\frakg
$$
be the composition of the absolute Frobenius on 
$(A/(pA+\fraka))\otimes_{\z_p}\frakg$ with the projection $\pi$ of 
$W(k_0)\otimes_{\z_p}\frakg$ onto $\fraku^-$ killing $\frakp$. 
Since $V^{-1}_{\fraka}([x_0, x_1, x_2, \ldots ])=[x_1, x_2, \ldots]$, we can 
deduce from the definition of $\psi_{\fraka_{n-1}/\fraka_n}$ that
\[
\psi_{\fraka_{n-1}/\fraka_n}= \id_{W(\fraka_{n-1}/\fraka_n)}\otimes_{A/(pA+\fraka)}\psi^*.
\]
This together with Lemma  \ref{stressful03} shows  that the adjoint nilpotence of $U$ implies that $\Ad^G(U)\circ\psi_{\fraka_{n-1}/\fraka_n}$ is nilpotent on $W(\fraka_{n-1}/\fraka_n)\otimes_{\Z_p}\frakg$. In turn, this nilpotence together with (\ref{addK}) implies 
that $K$ has a unique fixed point in $G(W(\fraka_{n-1}/\fraka_n))\simeq W(\fraka_{n-1}/\fraka_n)\otimes_{\Z_p}\frakg$. Finally, let us choose  a lift $h_*$ to $G(W(\fraka_{n-1}))$ of the fixed point 
of $K$, and let us write $h_n:=h_{n-1}h_*$. We can now see
that 
\[
h_n^{-1}U\Psi_\fraka(h_n)=h^{-1}_*h_{n-1}^{-1}U\Psi_\fraka(h_{n-1})\Psi_\fraka(h_*)=h^{-1}_*U''\Psi_\fraka(h_*)=U'
\]
modulo $G(W(\fraka_{n}))$. This completes the inductive step that shows the existence of $h$. (Notice that this is a group theoretic generalization
of the proof of \cite[Theorem 44]{Zinkdisplay}.)

Now let us show the uniqueness of $h$: We can see that it is enough
to show that $h^{-1}U\Psi_\fraka(h)=U=U'$ and $h\in G(W(\fraka))$ imply $h=1$.
In fact, it is enough by induction to assume that $h\in G(W(\fraka_{n-1}))$.
Apply the argument above  to $h$ instead of $h_{n-1}$; then we have $U''\equiv U\ {\rm mod}\ G(W(\fraka_{n-1}))$. Since obviously $K(0)=0$,
the  uniqueness of the fixed point of the map $K$ implies that $h\in G(W(\fraka_n))$.
Since $\cap_n \fraka_n=(0)$, we conclude that $h=1$.
\end{proof}

\subsubsection{} Recall that by Lemma \ref{diagonal} the diagonal of $\B(G, \mu)$ is affine. We will now show 
that the diagonal of $\B^{\rm nil}(G, \mu)$ is $p$-adically formally 
unramified, cf. \cite[Prop. 40]{Zinkdisplay}, \cite[Cor. 3.26(ii)]{E7}:

\begin{corollary}
\label{deform06}
Let $A$ be a $\fraka$-adically separated algebra in ${\rm ANilp}_{W(k_0)}$, where $\fraka$ is 
an ideal that contains some power of $p$. Let $\phi$ be an automorphism 
of an adjoint nilpotent $(G,\mu)$-display $\D$ over $\Spec A$. Then 
$\phi_{A/\fraka}$ is the identity if and only if $\phi$ is the identity.
\end{corollary}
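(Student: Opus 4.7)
The plan is to reduce to the banal case and then extract $h = 1$ from a Frobenius-nilpotence argument in the spirit of the uniqueness portion of Theorem \ref{GMZCF}, run inductively along the $\fraka$-adic filtration.

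One direction is trivial, so we assume $\phi_{A/\fraka} = \id$ and argue $\phi = \id$. Since $(G,\mu)$-displays and their isomorphisms form an \'etale sheaf (Lemma \ref{diagonal}) and every such display is \'etale-locally banal, we may pass to an \'etale cover of $\Spec A$ and assume that $\D$ is banal, represented by some $U \in L^+G(A) = G(W(A))$. Then $\phi$ is given by an element $h \in H^\mu(A)$ satisfying $h^{-1} U \Phi_{G,\mu}(h) = U$, and the hypothesis $\phi_{A/\fraka} = \id$ translates to the condition that $h$ projects to $1$ in $G(W(A/\fraka))$, i.e.\ $h \in G(W(\fraka))$.

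Since $A$ is $\fraka$-adically separated, $\bigcap_{n\geq 1}\fraka^n = 0$, and by Witt-functoriality $\bigcap_n W(\fraka^n) = 0$, so it suffices to show by induction on $n$ that $h \in G(W(\fraka^n))$ for all $n \geq 1$; the base case $n = 1$ is given. For the inductive step, assume $h \in G(W(\fraka^n))$ and reduce modulo $\fraka^{n+1}$: setting $\bar A = A/\fraka^{n+1}$ and $\bar\fraka = \fraka^n/\fraka^{n+1}$, we have $\bar\fraka^2 = 0$ in $\bar A$ (as $n \geq 1$), so $\bar\fraka$ is square-zero. On such a square-zero ideal, Zink's logarithmic ghost coordinates collapse: $W(\bar\fraka) \cong \prod_{i\geq 0}\bar\fraka$ naturally, $V_{\bar\fraka}^{-1}$ is given by the trivial recipe, and the Lie-algebra map $\psi_{\bar\fraka}: \frakh^\mu_{\bar\fraka}(\bar A) \to W(\bar A)\otimes_{\Z_p}\frakg$ of \eqref{blabla} is well-defined without further pd hypotheses on $\fraka$.

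The image $\bar h \in G(W(\bar\fraka))$ can be written as $\exp(x)$ with $x \in W(\bar\fraka) \otimes_{\Z_p}\frakg$ (convergent since $\bar\fraka^2 = 0$), and the identity $\bar h^{-1}\bar U \,\Phi_{G,\mu}(\bar h) = \bar U$ linearizes to
\[
 x = \bigl(\Ad^G(\bar U) \circ \psi_{\bar\fraka}\bigr)(x).
\]
By Lemma \ref{stressful03}, the adjoint nilpotence assumption on $\D$ forces $\Ad^G(\bar U)\circ \psi_{\bar\fraka}$ to be nilpotent on $W(\bar\fraka)\otimes_{\Z_p}\frakg$. Iterating the fixed-point equation then yields $x = 0$, whence $\bar h = 1$ and $h \in G(W(\fraka^{n+1}))$, closing the induction. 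The main subtlety I expect is the careful handling of the divided-power technology on the square-zero quotients $\bar\fraka$ (particularly at $p = 2$, where compatibility of the trivial pd on $\bar\fraka$ with the canonical pd on $pW(k_0)$ can fail): rather than invoking Theorem \ref{GMZCF} as a black box at each step, one replays only its uniqueness mechanism, for which the nilpotence of $\Ad^G(\bar U)\circ \psi_{\bar\fraka}$ (available from Lemma \ref{stressful03}) is what is actually needed.
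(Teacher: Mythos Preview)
Your argument follows essentially the same route as the paper's own proof: reduce to the banal case and invoke the uniqueness mechanism from Theorem~\ref{GMZCF} on the successive square-zero quotients $\fraka^n/\fraka^{n+1}$. There is, however, one genuine gap in the order of operations.

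You pass to an \'etale cover $A\to A'$ \emph{first} and then run the $\fraka$-adic induction on $A'$, concluding from $h\in\bigcap_n G(W(\fraka^n A'))$ that $h=1$ via $\bigcap_n W(\fraka^n A')=0$. But this last vanishing requires $A'$ to be $\fraka A'$-adically separated, and nothing in the hypotheses guarantees that an \'etale cover of a $\fraka$-adically separated ring is again $\fraka$-adically separated (no Noetherian assumption is in force). The paper avoids this by reversing the order: it first uses the affineness of the diagonal (Lemma~\ref{diagonal}(b)) to reduce the claim over the original $A$ to showing $\phi_{A/\fraka^n}=\id$ for every $n$, and only \emph{then}, for each fixed $n$, passes to an \'etale cover of $A/\fraka^n$ to make the display banal. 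At that stage there is no separatedness issue left. Your argument is easily repaired the same way: after establishing $h\equiv 1\bmod W(\fraka^n A')$ for all $n$, descend each congruence back to $\phi_{A/\fraka^n}=\id$ over the original $A$ by \'etale descent, and then conclude via the affine diagonal and the $\fraka$-adic separatedness of $A$ itself.

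Your remark about sidestepping the pd-compatibility issue at $p=2$ by extracting only the nilpotence of $\Ad^G(\bar U)\circ\psi_{\bar\fraka}$ (rather than invoking Theorem~\ref{GMZCF} as a black box) is well-taken; the paper's proof simply asserts that square-zero ideals carry a natural pd-structure and applies Theorem~\ref{GMZCF} directly.
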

\begin{proof}
It is enough to prove, that if $\phi_{A/\fraka}$ is the identity, then $\phi_{A/\fraka^n}$ is the identity for 
every $n$, since by Lemma \ref{diagonal} (b), the diagonal  of the stack $(G,\mu)$-displays is
affine, hence separated. By a straightforward induction argument it suffices to do this under the additional assumption that $\fraka^2=0$. After passing to some affine \'etale covering we can also assume that $\D$ is banal. Then since  ideals of vanishing square have a natural pd-structure
we can apply Theorem \ref{GMZCF} and the result follows immediately from the uniqueness in the statement there. 
\end{proof}

\subsubsection{} Suppose that $\D_0$ is a adjoint nilpotent   $(G,\mu)$-display over $A_0=A/\fraka$ which is banal, and so given by 
a $U_0\in G(W(A_0))$. Using Theorem \ref{GMZCF} we will classify lifts (``deformations'') of $\D_0$ to displays over  $A$, up to isomorphism. 
By definition, a lift of $\D_0$ is a pair $(\D, \delta)$ of a $(G, \mu)$ display $\D$ over $A$ together with an isomorphism $\delta: \D_0\xrightarrow{\sim} \D\otimes_AA/\fraka$. 
Since $\D$ is also banal by \ref{torsCor}, such pairs are given by pairs $(U, h_0)$ with $U\in G(W(A))$, $h_0\in H^\mu(A_0)$, such that 
\[
U\,{\rm mod}\,\fraka=h^{-1}_0U_0\Phi_{G,\mu}(h_0);
\]
two such pairs $(U, h_0)$, $(U', h_0')$, are isomorphic if there is $h\in H^\mu(A)$ with 
\[
h\, {\rm mod}\, \fraka=h^{-1}_0 h'_0, \qquad U'=h^{-1}U\Phi_{G,\mu}(h).
\]
Suppose we are given a pair $(U, h_0)$ up to isomorphism. 
Under our assumptions, since also $H^\mu$ is formally smooth, we can lift $h_0\in H^\mu(A_0)$ to $\tilde h_0\in H^\mu(A)$;
this gives an isomorphism of $(U, h_0)$ to a pair of the form $(U', 1)$, i.e. with $U'\,{\rm mod}\, \fraka=U_0$;
hence, in classifying pairs $(U, h_0)$ up to isomorphism, we can always assume that the second component
is trivial, i.e. $h_0=1$; then $U\,{\rm mod}\, \fraka=U_0$. Suppose that $(U,1)$, $(U', 1)$, are two such pairs.
By Theorem \ref{GMZCF} there is unique $g\in G(W(\fraka))$ such that 
\[
U'=g^{-1}U\Psi_\fraka (g).
\]
Using this and the fact that the restriction of $\Psi_\fraka$ to $H^\mu(\fraka)$ coincides with $\Phi_{G, \mu}$,
we see that the following is true: 

After choosing a lift $U$ of $U_0$,
we can identify the set of lifts of the $(G,\mu)$-display $\D_0$ up to isomorphism with the set of right cosets $G(W(\fraka))/H^\mu(\fraka)$;
the bijection is given by
\[
gH^\mu(\fraka)\mapsto [(g^{-1}U\Psi_\fraka(g), 1)].
\]
We can see that the natural map 
\[
G(W(\fraka))/H^\mu(\fraka)\hookrightarrow G(W(A))/H^\mu(A)\to (G/P_\mu)(A)
\]
identifies $G(W(\fraka))/H^\mu(\fraka)$ with the set of $A$-valued points of the homogeneous space $G/P_\mu$
that reduce to the identity coset modulo $\fraka$. (As usual, we can then also see that this set is in bijection with
the set of liftings of the Hodge filtration of $\D_0$.)

\subsubsection{}\label{affine deformation} Continue with the above assumptions and notations but 
suppose that in addition we have $\fraka^2=(0)$.
Then we have as above
\[
\fraka\otimes_{W(k_0)} \fraku^- \xrightarrow{\sim} G(W(\fraka))/H^\mu(\fraka).
\]
Hence, if $\fraka^2=(0)$, the choice of a lift $U$ of $U_0$ gives a bijection 
between the set of all lifts of $\D_0$ up to isomorphism and the set $\fraka\otimes_{W(k_0)} \fraku^-$. In fact, 
the above discussion shows that there is an action of the group $\fraka\otimes_{W(k_0)}\fraku^-$ on the 
set of all lifts of $\D_0$ up to isomorphism, and this set is a (trivial) principal homogeneous space for
the group  $\fraka\otimes_{W(k_0)}\fraku^-$.

\subsubsection{} Suppose that $k$ is a separably closed field 
of characteristic $p$ and let $\D_0$ be an adjoint nilpotent $(G,\mu)$ display over $S_0=\Spec(k)$.
We can then consider the functor ${\rm Def}_{\D_0}$ of formal deformations of $\D_0$.
This is a functor on the category of augmented local Artinian $W(k)$-algebras, i.e. local Artinian $W(k)$-algebras $(A, \frakm)$ with an 
isomorrphism $A/\frakm\xrightarrow{\sim} k$ with ${\rm Def}_{\D_0}(A)$ the set of isomorphism classes of lifts of $\D_0$
to a $(G, \mu)$-display over $A$. Here, as above, a lift of $\D_0$ is by definition a pair $(\D, \delta)$ of a $(G,\mu)$-display $\D$ over $A$ together with an isomorphism  $\delta: \D_0\xrightarrow{\sim} \D\otimes_A k$.

The display $\D_0$ is  banal and given by a matrix $U_0\in G(W(k))$. Choose a basis $e_1,\ldots , e_r$
of the $W(k_0)$-module $\fraku^-=\Lie(U_{\mu^{-1}})$. By \ref{minusculeunipotent} this induces 
an isomorphism ${\mathbb G}_a^r\xrightarrow{\sim} U_{\mu^{-1}}$; we will write by ${\rm exp}(a_1e_1+\cdots +a_re_r)$
the point of $U_{\mu^{-1}}$ which is the image of $(a_1,\ldots, a_r)$. 
Set $A=W(k)[[t_1,\ldots , t_r]]$ for the power series ring with $r$ variables.
(More canonically, we can take $A$ to be the formal completion of $U_{\mu^{-1}}\otimes_{W(k_0)}W(k)$ at the origin.)
Let us set
\[
g_{\rm uni}={\rm exp}([t_1]e_1+\cdots +[t_r]e_r)\in U_{\mu^{-1}}(W(A))\subset G(W(A))
\]
where $[t_i]=(t_i, 0, 0, \ldots )\in W(A)$ is the Teichmuller lift.
Set
\[
U_{\rm uni}=g^{-1}_{\rm uni} U_0\in G(W(A)).
\]
The element $U_{\rm uni}$ defines a $(G, \mu)$-display $\D_{\rm uni}$ over $A$. 

We claim that
$\D_{\rm uni}$ prorepresents the functor ${\rm Def}_{\D_0}$ of formal deformations of $\D_0$:

Given the above work,
the proof of this is very similar to the proof of the corresponding statement for Zink displays 
given in \cite[p. 173-176]{Zinkdisplay}. We will just sketch the argument here: We first observe that, by the discussion
in the above paragraph, ${\rm Def}_{\D_0}$ is formally smooth. Next, we notice that Theorem \ref{GMZCF} (or the discussion above) quickly implies 
that the reduction of $\D_{\rm uni}$ over $A_2:=A/(t_1, \ldots , t_r)^2$ is  universal for deformations over augmented local Artinian $W(k)$-algebras with  maximal ideal of square zero. 
(Notice that over $A_2$, we have  $\Psi_{(t_1,\ldots, t_r)}(g_{\rm uni})=1$.) This implies  that the morphism of functors on augmented local Artinian $W(k)$-algebras 
$\epsilon: {\rm Spf}(A)\to {\rm Def}_{\D_0}$ given by $\D_{\rm uni}$ induces an isomorphism on tangent spaces. It now follows that $\epsilon$ is an isomorphism and this
concludes the proof. The same argument works even if the field $k$ is not separably closed provided $\D_0$ is banal.

\subsubsection{}
We can similarly obtain a global version of some deformation theory statements in which we
consider adjoint nilpotent $(G,\mu)$-displays over general schemes in  ${\rm Nilp}_{W(k_0)}$
(cf. \cite[subsection 3.5]{E7} and especially \cite[corollary 3.27]{E7})). 

 Let $S$ be a scheme in ${\rm Nilp}_{W(k_0)}$ 
and recall that if $\D=(P, Q, u)$ is a $(G,\mu)$-display over $S$, then the Hodge filtration of $\D$ is a  section 
\[
\alpha: S\to P/H^\mu
\]
of the $X_\mu=G/P_\mu$-bundle $P/H^\mu\to S$.
Recall also the vector bundle 
$
T_\D
$
 over $S$ obtained by pulling
back via $\alpha$ the normal bundle of the (regular) closed immersion
$\alpha(S)\hookrightarrow \P/H^\mu$.

Suppose $\mathcal I\subset\O_S$ is an ideal sheaf with $\mathcal I^2=0$. For any 
$S$-scheme $X$ we denote $S_0\times_SX=X_0$, where $S_0\hookrightarrow S$ 
is the nilimmersion defined by $\mathcal I$. We can view 
$$
0\rightarrow\mathcal I\hookrightarrow\O_S\twoheadrightarrow\O_{S_0}\rightarrow0
$$ 
as an exact sequence on abelian sheaves on the fpqc site of $S$: More specifically let 
$\O_{S_0}(X)$, $\O_S(X)$, and $\mathcal I(X)$ be the global sections of $X_0$, 
$X$, and the kernel of $\O_S(X)\rightarrow\O_{S_0}(X)$.
 
 Now let $\D_0$ be an adjoint nilpotent $(G,\mu)$-display 
over $S_0$.  Again, by a lift of $\D_0$ over an $S$-scheme $X$ we mean a pair $(\D,\delta)$ where 
$\D$ is a $(G,\mu)$-display over $X$, and 
$\delta:\D_0\times_{S_0}X_0\xrightarrow{\sim}\D\times_XX_0$ is an isomorphism.
By  Corollary \ref{deform06} no lift has any automorphism other 
than the identity. Let ${\rm D}_{\D_0,S}(X)$ be the set of isomorphism classes of lifts over $X$. 
By pull-back of lifts this is a presheaf on $S$. Let us check that ${\rm D}_{\D_0,S}$ satisfies the sheaf axiom for a faithfully flat 
map $Y\rightarrow X$: If the pull-backs $\pr_1^*(\D)$ and 
$\pr_2^*(\D)$ of some $(\D,\delta)\in{\rm D}_{\D_0,S}(Y)$ agree for the two projections $\pr_1,\,\pr_2:Y\times_XY\rightarrow Y$, then this means that there 
exists $\beta:\pr_1^*(\D)\stackrel{\simeq}{\rightarrow}\pr_2^*(\D)$ which restricts to the identity on $Y_0\times_{X_0}Y_0$. We easily deduce the cocycle 
condition for $\beta$, because any equality of isomorphisms of displays over $Y\times_XY\times_XY$  can be checked over $Y_0\times_{X_0}Y_0\times_{X_0}Y_0$, 
by Corollary \ref{deform06}. This shows that $\D$ (resp. $\delta$) descends to $X$ (resp. $X_0$). It follows that ${\rm D}_{\D_0,S}$ is a fpqc-sheaf on $S$.

\begin{theorem}\label{globaldef}
Suppose that $\D_0$ is  an adjoint nilpotent $(G,\mu)$-display 
over $S_0$ and that $S_0\hookrightarrow S$ is a closed immersion defined by an ideal sheaf $\mathcal I$ of square zero. 
Then the above functor ${\rm D}_{\D_0,S}$ has the structure of a locally trivial principal homogeneous space for the sheaf 
$\mathcal I\otimes_{\O_{S}}T_{\D_0}$. In particular, the set ${\rm D}_{\D_0,S}(S)$ of isomorphism classes of lifts of $\D_0$ over $S$ is either empty or is a principal homogeneous space for $\Gamma(S, \mathcal I\otimes_{\O_{S}}T_{\D_0})$.
\end{theorem}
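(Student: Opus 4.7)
The plan is to reduce to the affine banal situation already handled in paragraph \ref{affine deformation}, to promote the bijection found there to an \emph{action} of $\mathcal{I}\otimes_{\O_{S_0}}T_{\D_0}$, and finally to glue these local actions using the fpqc sheaf property of ${\rm D}_{\D_0,S}$ that is established immediately before the theorem.

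First, since any $(G,\mu)$-display is banal étale-locally (by the remark after the definition of banal) and since étale morphisms lift uniquely through a nilpotent immersion, we may pass to an étale cover $S'\to S$ with pullback $S'_0\to S_0$ such that $S'=\Spec A$ is affine, $\fraka:=\mathcal{I}(S')$ satisfies $\fraka^2=0$, and $\D_0|_{S'_0}$ is banal, represented by some $U_0\in G(W(A_0))$ with $A_0=A/\fraka$. Using the formal smoothness of $L^+G$ over $W(k_0)$ (Proposition \ref{greenbergprop}(d)), we lift $U_0$ to an element $U\in G(W(A))$; this gives a lift $\D$ of $\D_0$ over $S'$ and hence the ``locally trivial'' half of the claim.

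Next, with such a $U$ fixed, the analysis in paragraph \ref{affine deformation} furnishes a canonical bijection
\[
\fraka\otimes_W\fraku^-\;\xrightarrow{\sim}\;G(W(\fraka))/H^\mu(\fraka)\;\xrightarrow{\sim}\;{\rm D}_{\D_0,S}(S'),
\]
where a coset $gH^\mu(\fraka)$ is sent to the class of $(g^{-1}U\Psi_\fraka(g),1)$. Reading this as an operation, each $v\in\fraka\otimes_W\fraku^-$ produces $g_v\in G(W(\fraka))$ acting on lifts by $U\mapsto g_v^{-1}U\Psi_\fraka(g_v)$. A short manipulation using the uniqueness in Theorem \ref{GMZCF} shows the result is independent of the lift $U$ representing a given isomorphism class, and hence defines an action of the abelian group $\fraka\otimes_W\fraku^-$ on ${\rm D}_{\D_0,S}(S')$; that the action is simply transitive is exactly the content of \ref{affine deformation}.

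To translate the group $\fraka\otimes_W\fraku^-$ into global sheaf-theoretic terms, note that because $Q_0|_{S'_0}$ is trivial, the Hodge section $\alpha_0:S'_0\to P_0/H^\mu$ identifies with the identity section of the trivial $X_\mu$-bundle; since the tangent space of $X_\mu=G/P_\mu$ at the base point is canonically $\fraku^-$, we obtain $T_{\D_0}|_{S'_0}\cong \O_{S'_0}\otimes_W\fraku^-$ and thus $\Gamma(S'_0,\mathcal{I}\otimes_{\O_{S_0}}T_{\D_0})\cong \fraka\otimes_W\fraku^-$, turning the action into one of $\Gamma(S'_0,\mathcal{I}\otimes_{\O_{S_0}}T_{\D_0})$ on ${\rm D}_{\D_0,S}(S')$.

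The heart of the argument, and the main technical obstacle, is checking that this local package is independent of the trivialization of $Q_0$. A change of trivialization by $h_0\in H^\mu(A_0)$ replaces $U_0$ by $h_0^{-1}U_0\Phi_{G,\mu}(h_0)$ and simultaneously twists the identification $T_{\D_0}|_{S'_0}\cong \O_{S'_0}\otimes_W\fraku^-$ via the adjoint action of $h_0$ on $\fraku^-$ (which factors through the reduction $H^\mu\to P_\mu$, cf.\ Appendix \ref{simplerversion}). One must verify, using the normal form of $H^\mu$ from Proposition \ref{productI} and the definition of $\Psi_\fraka$, that these two twists agree as cocycles; granting this, the action is intrinsic and, by the fpqc descent already proved for ${\rm D}_{\D_0,S}$, the étale-local actions glue to endow ${\rm D}_{\D_0,S}$ with the structure of a principal homogeneous space under $\mathcal{I}\otimes_{\O_{S_0}}T_{\D_0}$. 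The statement about $\Gamma(S,\mathcal{I}\otimes_{\O_{S_0}}T_{\D_0})$ then follows by taking global sections of the torsor.
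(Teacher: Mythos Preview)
Your proposal is correct and follows essentially the same route as the paper's proof: reduce to the affine banal case treated in \ref{affine deformation}, identify $\fraka\otimes_W\fraku^-$ with sections of $\mathcal I\otimes_{\O_{S_0}}T_{\D_0}$ via the tangent space of $X_\mu$ at the identity coset, and then globalize by descent. The paper's proof is a two-line sketch that invokes \ref{affine deformation} and ``descent'' without further comment; you have usefully made explicit several points the paper suppresses, in particular the need to check that the action is independent of the chosen lift $U$ and of the trivialization of $Q_0$, so that the local torsor structures glue. One small remark: the independence of the action on the choice of base lift $U$ follows most directly from the fact that $W(\fraka)^2=0$ makes $G(W(\fraka))$ abelian (and $\Psi_\fraka$ a homomorphism on it), rather than from the uniqueness clause of Theorem \ref{GMZCF}; the latter is what makes the underlying bijection well-defined, but the torsor structure comes from the abelian group structure.
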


\begin{proof}
The case that $S$ is affine and the display is banal is given by \ref{affine deformation}.
(To see this one uses the fact that the Lie algebra $\fraku^-$ gives the tangent space
of the homogeneous space $X_\mu$ at the identity 
$1\cdot P_\mu$.) The general case follows from this and descent.
\end{proof}

\begin{corollary}
\label{globtriv}
If $S_0$ is affine, then ${\rm D}_{\D_0,S}$ is globally trivial, i.e. $
{\rm D}_{\D_0,S}(S)\neq\emptyset$. \ $\square$% (cf. part (i) of \cite[corollary 3.27]{E7}).
\end{corollary}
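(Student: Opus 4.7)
The plan is to interpret the corollary as the vanishing of an obstruction class in sheaf cohomology. By Theorem \ref{globaldef}, ${\rm D}_{\D_0,S}$ is a torsor for the quasi-coherent sheaf $\mathcal F := \mathcal I \otimes_{\O_{S_0}} T_{\D_0}$; the local triviality is for the \'etale topology, since banal $(G,\mu)$-displays exist \'etale-locally on $S$ (by the remark following Corollary \ref{torsor03}) and the principal homogeneous space structure is then furnished by the affine banal case treated in \ref{affine deformation}. The existence of a global section of such a torsor is measured by its class in the first \'etale cohomology group $H^1(S,\mathcal F)$, so it suffices to verify that this cohomology vanishes.

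Next, I would note that the nilpotent closed immersion $S_0 \hookrightarrow S$ defined by the square-zero ideal $\mathcal I$ is a homeomorphism on underlying topological spaces and preserves affineness, so $S$ is affine whenever $S_0$ is. The sheaf $\mathcal F$ is quasi-coherent: $T_{\D_0}$ is a vector bundle on $S_0$ and $\mathcal I$ is a quasi-coherent $\O_{S_0}$-module (annihilated by itself in view of $\mathcal I^2=0$), so their tensor product is quasi-coherent and is naturally regarded as a quasi-coherent $\O_S$-module via push-forward along the nilimmersion.

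By the standard comparison of \'etale (and fpqc) cohomology with Zariski cohomology for quasi-coherent sheaves, combined with Serre's vanishing theorem on affine schemes, we obtain $H^1(S,\mathcal F) = 0$. This triviality of the torsor produces the desired global lift of $\D_0$ to $S$, so ${\rm D}_{\D_0,S}(S) \neq \emptyset$. The only real point of care is identifying the correct topology in which ${\rm D}_{\D_0,S}$ is a torsor, but once the \'etale (equivalently, fpqc) framework is fixed the argument is immediate; no further deformation-theoretic work is required beyond Theorem \ref{globaldef}.
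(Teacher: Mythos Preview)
Your argument is correct and is precisely the intended one: the paper gives no explicit proof (only a $\square$), because the corollary is meant to follow immediately from Theorem \ref{globaldef} via the standard observation that a torsor under a quasi-coherent sheaf on an affine scheme is trivial. Your care in noting that $S$ is affine once $S_0$ is, and that the relevant cohomology is \'etale (matching the local triviality coming from banality \'etale-locally), fills in exactly the details the paper leaves implicit.
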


\subsection{Faithfulness up to isogeny}

If $B$ is a ring such that both $pB$ and $\sqrt{0_B}$ are nilpotent the 
forgetful functor from (Zink-)displays   to isodisplays is faithful (see \cite[p. 186]{Zinkdisplay}). 
Here, we describe an extension of this which needs the notion of Frobenius separatedness defined in Appendix C. We write   ${\rm ANilp}_{W}^{\rm aFs}$ for the full 
subcategory of ${\rm ANilp}_W$ consisting of $W$-algebras which are almost Frobenius separated.

\begin{proposition}
\label{GNN01}
Suppose that $B$ is in ${\rm ANilp}^{\rm aFs}_W$. Then the 
functor of \ref{Gisodisplay} from adjoint nilpotent $(G,\mu)$--displays 
over $B$ to $G$-isodisplays over $B/pB$ is faithful.
\end{proposition}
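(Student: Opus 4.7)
The plan is to mimic Zink's classical faithfulness argument \cite[p.~186]{Zinkdisplay} in the group-theoretic setting, using the deformation theory developed in Section~\ref{deform04}. Since $\B(G,\mu)$ is a stack in groupoids, a faithfulness statement reduces to showing that any automorphism $\alpha\in\Aut(\D)$ of an adjoint nilpotent $(G,\mu)$-display $\D$ over $B$ whose induced map on the associated isodisplay $\D[1/p]$ over $B/pB$ is the identity must itself be the identity. I would then invoke Corollary~\ref{deform06} with the nilpotent pd-ideal $\fraka=pB\subset B$ (whose canonical divided powers are compatible with the standard ones on $pW(k_0)$) to reduce to showing that $\alpha|_{B/pB}$ is the identity \emph{as a display automorphism}, not merely as an isodisplay automorphism. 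This effectively replaces $B$ by $B/pB$ and lets me assume $pB=0$.

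So now $B$ is an $\mathbb{F}_p$-algebra in ${\rm ANilp}_W^{\rm aFs}$. Working \'etale-locally, $\D$ becomes banal and is represented by some $U\in G(W(B))$, while $\alpha$ corresponds to an element $h\in H^\mu(B)$ satisfying the $\Phi_{G,\mu}$-twisted commutation relation $hU=U\Phi_{G,\mu}(h)$, together with the hypothesis that $h$ maps to $1$ in $LG(B)=G(W(B)[1/p])$. Since $pB=0$ one has $p=V(1)$ and, by induction, $p^k x=V^kF^k(x)$ in $W(B)$; as $V$ is injective, the kernel of $W(B)\to W(B)[1/p]$ coincides with the set of $F$-nilpotent Witt vectors. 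After fixing a closed embedding $G\hookrightarrow\GL_N$, the entries of $h-1\in\frakg(W(B))$ are therefore all annihilated by some power of the Witt Frobenius.

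The key step is then to bootstrap from ``entries of $h-1$ lie in $\ker(F^n)$'' to ``entries of $h-1$ lie in $\ker(F^{n+1})$''. The decomposition $h=h^+h^-$ from Proposition~\ref{productI} together with the explicit description of $\Phi_{G,\mu}$ (namely $V^{-1}$ on $L^{>0}U_{\mu^{-1}}$ and $F\cdot{\rm Ad}(\mu(p))$ on $L^+P_\mu$, matching the weight decomposition $\frakg=\frakp\oplus\fraku^-$) converts the commutation relation into a fixed-point equation for a self-map on a suitable graded piece of $\frakg(W(B))$; adjoint nilpotence of $U$, via Lemma~\ref{stressful03}, guarantees that this self-map is nilpotent, so that the only solution modulo the next step of the Frobenius filtration is trivial. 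Iterating the inductive step forces the entries of $h-1$ into $\bigcap_{n}\ker(F^n|_{W(B)})$, and the almost Frobenius separatedness of $B$ (Appendix~C) then yields $h=1$, as desired.

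The main obstacle I expect is the concrete realization of the bootstrap in the preceding paragraph: one must organize the twisted commutation $hU=U\Phi_{G,\mu}(h)$ so that the non-linear higher-order corrections in $h-1$ automatically fall into the deeper Frobenius kernel $\ker(F^{n+1})$, while the linear part is genuinely governed by the nilpotent self-map supplied by Lemma~\ref{stressful03}. Conceptually, this is a group-theoretic analog of the fixed-point construction in Theorem~\ref{GMZCF}, with the Frobenius filtration $\{\ker(F^n)\}_n$ of $W(B)$ replacing the $\fraka$-adic filtration there, and with almost Frobenius separatedness replacing $\fraka$-adic completeness as the hypothesis that makes the filtration sufficiently exhaustive.
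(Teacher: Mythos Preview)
Your reduction to $pB=0$ via Corollary~\ref{deform06}, and the observation that the image of $h$ in $G(W(B)[1/p])$ is trivial so that (after embedding $G\hookrightarrow\GL_N$) the finitely many entries of $h-1$ lie in $\bigcup_n W(J_n)$ with $J_n=\ker(F_B^n)$, are correct and match the paper. The subsequent bootstrap, however, has a genuine gap.

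You propose to pass ``from entries of $h-1$ in $\ker(F^n)$ to entries in $\ker(F^{n+1})$''. This direction is vacuous: the kernels $\ker(F^n)=W(J_n)$ form an \emph{increasing} chain, so the implication is automatic. To reach $h=1$ you would need to \emph{decrease} $n$, and there is no mechanism for the twisted commutation relation to lower a Frobenius-nilpotence exponent. Correspondingly, your terminal step ``$h-1\in\bigcap_n\ker(F^n)$ and aFs gives $h=1$'' misreads the definition: almost Frobenius separatedness asserts $\bigcap_m J_n^m=0$ for each \emph{fixed} $n$ (the powers of a fixed Frobenius kernel intersect trivially), not $\bigcap_n J_n=0$. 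The latter intersection is just $J_1$, which is typically nonzero. So the filtration you have chosen is going the wrong way and does not interact with the aFs hypothesis.

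The paper avoids any element-level filtration induction by working with ideals of $B$. Let $\fraka\subset B$ be the \emph{smallest} ideal such that $\phi\equiv\id_\D\bmod\fraka$; this exists because the diagonal of $\B(G,\mu)$ is affine (Lemma~\ref{diagonal}). Rigidity (Corollary~\ref{deform06}) applied to the square-zero situation $\fraka/\fraka^2$ forces $\fraka=\fraka^2$. Finite presentation of $G$ over $\Z_p$, hence of the automorphism scheme $\underline\Aut_G(\P)$ over $W(B)$, shows that $\phi$ already becomes the identity over $W(B/J_N)$ for some finite $N$, so $\fraka\subset J_N$ is \emph{bounded nilpotent}. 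Then Lemma~\ref{FsBNI}---an idempotent, bounded-nilpotent ideal in an aFs ring vanishes---finishes the argument. The point is to encode ``how far $\phi$ is from $\id$'' as a single ideal of $B$ and show it is simultaneously idempotent and bounded nilpotent; this is exactly the shape of input that the aFs condition is designed to kill.
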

\begin{proof}
Let $\phi$ be an automorphism of a display $\D=(P,Q, u)$ with $( G,\mu)$-structure 
over $B$, such that $\phi$ and $\id_\D$ give rise to the same self-$G$-quasi-isogeny. We would like to prove $\phi=\id_\D$. Using 
Corollary \ref{deform06} and since $p$ is nilpotent in $B$, we can easily reduce to the case $pB=0$. Let 
$\fraka$ be the smallest ideal of $B$ such that $\phi\equiv\id_\D\ {\rm mod}\ \fraka$;
this ideal exists since by Lemma \ref{diagonal} (b) the diagonal of $\B(G,\mu)$
is representable and affine. In fact, the $B$-scheme representing the automorphisms of the display
$\D$ is a closed subscheme of the Greenberg transform $F{\mathcal I}$ of $\mathcal I:=\underline{\rm Aut}_G(\P)$,
where $\P$ is the $G$-torsor over $W(B)$ that corresponds to the $L^+G$-torsor $P$ by Proposition \ref{torsor02}.
Corollary \ref{deform06} on the rigidity of automorphisms  implies  that $\fraka=\fraka^2$. 
 For simplicity, set $J_n=\ker(F^n_{B})=\{x\in B |x^{p^n}=0\}$. 
Note that $W(J_n)=W(B)[p^n]$, and
$$
\bigcup\nolimits_n W(J_n)=\ker(W(B)\rightarrow W(B)_\q=W(B)[1/p]).
$$
Since $G$ is of finite type over $\Z_p$, by descent we see that the $G$-torsor
$\P$ and also the scheme $\mathcal I$, are of finite presentation
over $W(B)$. Since $\phi$ becomes the identity in $F_{(p)}{\mathcal I}(B)={\mathcal I}( W(B)[1/p])$, 
there must exist some integer $N$ such that $\phi$ is equal to the identity in 
${\mathcal I}(W(B/J_N))$. We deduce that $\fraka \subset  J_N$ which is 
bounded nilpotent, while $\fraka=\fraka^2$.  Hence, by 
Lemma \ref{FsBNI}, $\fraka=0$ which completes the proof.\end{proof}

\subsection{Display blocks}\label{blockpar}

In this subsection, we define and discuss the notion of a display block. This is a technical construction 
which is useful for handling a certain type of descent that appears in the proof of Proposition \ref{analog} and of  the main result 
Theorem \ref{hodgemain}. The subsection can be omitted at
first reading.

\subsubsection{}\label{embeddingpair}
We consider two pairs $(G, \mu)$ and $(G',\mu')$ where both $G$ and $G'$ are reductive group schemes over $\z_p$, 
and $\mu$ and $\mu'$ are, respectively, minuscule cocharacters of $G$ and $G'$ defined over $W(k_0)$. We suppose 
that there is a group scheme homomorphism $i: G\to G'$ which is a closed immersion and is such that 
$\mu'=i\cdot\mu$ as in \ref{twopairs}. We will denote this set-up by writing $i:(G,\mu)\hookrightarrow(G',\mu')$. 

Note that if for a $(G, \mu)$-display 
$\D$ the $(G', \mu')$-display $i(\D)$ is adjoint nilpotent then $\D$ is 
also adjoint nilpotent (since the Frobenius isocrystal given by ${\rm Ad}^G(\D)[1/p]$
is a sub-isocrystal of the one given by ${\rm Ad}^{G'}(i(\D))[1/p]$). 

\begin{definition}
\label{block}
Fix $i:(G,\mu)\hookrightarrow(G',\mu')$ as above.
Consider an injective homomorphism $A\hookrightarrow B$
in ${\rm ANilp}_{W(k_0)}$. An
 $(A,B)$-display block is a triple $(\D',\D,\psi)$ where
\begin{itemize}
\item
$\D'$ is a $(G', \mu')$-display over $A$,
\item
$\D$ is a $(G, \mu)$-display over $B$, and
\item
$\psi: i(\D)\xrightarrow{\sim} \D'\times_AB$ is an isomorphism of $(G',\mu')$-displays.
\end{itemize}
An isomorphism of two $(A,B)$-display blocks 
\[
(\C',\C,\phi)\xrightarrow{\sim} (\D',\D,\psi).
\]
is a pair of isomorphisms 
$\epsilon':\C'\xrightarrow{\sim}\D'$ and $\epsilon:\C\xrightarrow{\sim}\D$ satisfying 
\[
\psi\circ i(\epsilon)\circ\phi^{-1}=\epsilon'\times_AB.
\]
\end{definition}

\begin{definition}
The $(A, B)$-display block
$(\D',\D,\psi)$ is called {\sl effective} if there exists 
a $(G, \mu)$-display $\D^\Diamond$ over $A$
such that 
\[
(\D',\D,\psi)\cong (i(\D^\Diamond),\D^\Diamond\times_AB,i(\id_{\D^\Diamond})\times_AB).
\] 
Then the $(G, \mu)$-display $\D^\Diamond$ is unique up to a unique isomorphism.
\end{definition}

\subsubsection{} Consider an ideal $\frakb\subset B$ and write $\fraka:=\frakb\cap A\subset A$. By the 
reduction of $(\D',\D,\psi)$ modulo $\frakb$ we mean the $(A/\fraka,B/\frakb)$-display block given by 
the triple $(\D'\times_AA/\fraka,\D\times_BB/\frakb,\psi\times_BB/\frakb)$.

\subsubsection{}  
We first need the following result, 
which is a slight generalization of \cite[Corollary 3.28]{E7}:
\begin{lemma}
\label{deform02}
Suppose $i:(G,\mu)\hookrightarrow(G',\mu')$ is as above, and consider 
 homomorphisms $A\hookrightarrow B\twoheadrightarrow B/\frakb$
in ${\rm ANilp}_{W(k_0)}$. We assume that
\begin{itemize}
\item[(i)]
$B$ is $\frakb$-adically separated, i.e. $(0)=\bigcap_n\frakb^n$, and
\item[(ii)]
$A$ is $\frakb$-adically closed as a subset of $B$, i.e. $A=\bigcap_n(A+\frakb^n)$.
\end{itemize}
Let $\fraka:=A\cap\frakb$ so that we have
 $A/\fraka\hookrightarrow B/\frakb$. 

Suppose 
that $\D_1$ and $\D_2$ are two  $(G,\mu)$-displays over $A$. We assume that 
$i(\D_j\times_AA/\fraka)$, $j=1$, $2$ are adjoint-nilpotent $(G',\mu')$-displays 
(so that then  $\D_j\times_AA/\fraka$ are also adjoint-nilpotent 
$(G,\mu)$-displays). Let  
$$
\psi: i(\D_1)\times_AB\xrightarrow{\sim}i(D_2)\times_AB
$$ 
be a $(G',\mu')$-isomorphism over $B$, and 
$$
\phi_0: \D_1\times_AA/\fraka\xrightarrow{\sim}\D_2\times_AA/\fraka
$$
a $(G,\mu)$-isomorphism over $A/\fraka$. Assume the base changes of $\phi_0$ 
and $\psi$ to $B/\frakb$ are compatible in the obvious sense. Then there is a 
unique $(G, \mu)$-isomorphism  $\phi: \D_1\xrightarrow{\sim}\D_2$ over $A$
that induces both $\phi_0$ and $\psi$.
\end{lemma}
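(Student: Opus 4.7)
My strategy is to construct $\phi$ by building a compatible system of isomorphisms modulo the filtration $\fraka_n := A\cap\frakb^n$ on $A$, using $\psi$ together with the uniqueness in Theorem~\ref{GMZCF} to pin down each successive lift, and then to glue via the $\frakb$-adic topology on $B$. From~(i), $\bigcap_n\fraka_n = A\cap\bigcap_n\frakb^n = 0$, and from~(ii), $A = \bigcap_n(A+\frakb^n)$. For $n\ge 2$, $\fraka_{n-1}^2\subset A\cap\frakb^{2n-2}\subset\fraka_n$, so $\fraka_{n-1}/\fraka_n$ is a square-zero ideal in $A/\fraka_n$ and carries its canonical pd-structure, automatically compatible with the one on $pW(k_0)$; likewise for $\frakb^{n-1}/\frakb^n$ in $B/\frakb^n$. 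Since $\fraka^n\subset\fraka_n$, the ideal $\fraka/\fraka_n$ of $A/\fraka_n$ is nilpotent, so adjoint nilpotence of $\D_j\times_A A/\fraka$ (inherited from the hypothesis on $i(\D_j\times_A A/\fraka)$) propagates to $\D_j\times_A A/\fraka_n$; similarly for $i(\D_j)\times_A B/\frakb^n$. Set $Y:=\underline{\mathrm{Isom}}_{(G,\mu)}(\D_1,\D_2)$ and $Y':=\underline{\mathrm{Isom}}_{(G',\mu')}(i(\D_1),i(\D_2))$; both are affine $A$-schemes by Lemma~\ref{diagonal}(b). \'Etale-locally where $\D_1,\D_2$ become banal, $i:Y\to Y'$ is cut out inside the closed immersion $H^\mu\hookrightarrow H^{\mu'}$ (a closed immersion because $i:G\hookrightarrow G'$ is closed and $P_\mu = G\cap P_{\mu'}$), so $i:Y\hookrightarrow Y'$ is a closed immersion globally. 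The task thus reduces to showing that $\psi\in Y'(B)$ lies in the image of $Y(A)\to Y'(B)$.

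Now I will inductively construct $\phi_n\in Y(A/\fraka_n)$ with $i(\phi_n)=\psi\bmod\frakb^n$ in $Y'(B/\frakb^n)$. Take $\phi_1:=\phi_0$ (using the compatibility hypothesis). For the step $n-1\to n$ with $n\ge 2$, pass to an \'etale cover where everything is banal and represent $\D_j$ by $U_j\in G(W(A/\fraka_n))$, $\phi_{n-1}$ by $h_{n-1}\in H^\mu(A/\fraka_{n-1})$. Formal smoothness of $H^\mu$ over $W(k_0)$, from Proposition~\ref{greenbergprop}(d), allows me to choose a lift $\tilde h\in H^\mu(A/\fraka_n)$. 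Set $\tilde U:=\tilde h^{-1}U_1\Phi_{G,\mu}(\tilde h)\in G(W(A/\fraka_n))$, which is congruent to $U_2$ modulo $W(\fraka_{n-1}/\fraka_n)$. By Theorem~\ref{GMZCF} there is a unique $g\in G(W(\fraka_{n-1}/\fraka_n))$ with $U_2 = g^{-1}\,\tilde U\,\Psi_{\fraka_{n-1}/\fraka_n}(g)$; applying the same theorem over $B/\frakb^n$ to $i(\tilde U),\,i(U_2)$ produces a unique $g'\in G'(W(\frakb^{n-1}/\frakb^n))$, and functoriality under $i$ forces $i(g)=g'$. On the other hand, $\delta := i(\tilde h)^{-1}\cdot(\psi\bmod\frakb^n)\in H^{\mu'}(B/\frakb^n)$ is trivial modulo $\frakb^{n-1}$, and a direct computation using $i\circ\Phi_{G,\mu}=\Phi_{G',\mu'}\circ i$ shows that $\delta$ satisfies the same equation $i(U_2)=\delta^{-1}\,i(\tilde U)\,\Psi(\delta)$ as $g'$ does (note that $\Psi(\delta)=\Phi_{G',\mu'}(\delta)$, since $\delta$ lies in the $H^{\mu'}$-part of the extended domain of $\Psi$). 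The uniqueness in Theorem~\ref{GMZCF} then gives $g' = \delta$, whence $i(g) = \delta\in H^{\mu'}$; since $P_\mu = G\cap P_{\mu'}$, this forces $g\in H^\mu(\fraka_{n-1}/\fraka_n)$. Thus $\phi_n := \tilde h\cdot g\in H^\mu(A/\fraka_n)$ is an isomorphism $\D_1|_{A/\fraka_n}\xrightarrow{\sim}\D_2|_{A/\fraka_n}$ lifting $\phi_{n-1}$ with $i(\phi_n)=\psi\bmod\frakb^n$. This $\phi_n$ is unique given its constraints (by injectivity of $i$ on $Y$ combined with $A/\fraka_n\hookrightarrow B/\frakb^n$), so it descends from the \'etale cover to a global element of $Y(A/\fraka_n)$.

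Finally I will assemble and conclude. Writing $Y=\Spec T$, the fact that $\psi\bmod\frakb^n$ factors through $Y(B/\frakb^n)$ via $\phi_n$ for every $n$ means that the defining ideal of $Y\hookrightarrow Y'$, pulled back to $B$ via $\psi$, lies in $\bigcap_n\frakb^n=0$; thus $\psi$ factors through $Y(B)$. The resulting ring map $T\to B$ carries each $t\in T$ into $A+\frakb^n$ for all $n$ (because its reduction modulo $\frakb^n$ factors through $A/\fraka_n$), hence into $\bigcap_n(A+\frakb^n)=A$ by hypothesis~(ii); so it factors through $A$, producing the desired $\phi\in Y(A)$. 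By construction $\phi$ restricts to $\phi_0$ over $A/\fraka$ and induces $\psi$ over $B$. Uniqueness of $\phi$ is automatic, since $Y(A)\to Y'(B)$ is the composite of $Y(A)\hookrightarrow Y(B)$ (from $A\hookrightarrow B$ and the affineness of $Y$) with the monomorphism $Y\hookrightarrow Y'$. The hard part of the argument is the inductive step: one must run Theorem~\ref{GMZCF} simultaneously on both the $(G,\mu)$- and $(G',\mu')$-sides and exploit its uniqueness clause, together with the commutation $i\circ\Phi_{G,\mu}=\Phi_{G',\mu'}\circ i$, to transfer information from $\psi$ back to the $G$-valued correction $g$ and in particular to force $g$ to live in the smaller group $H^\mu$.
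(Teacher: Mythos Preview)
Your proof is correct and follows essentially the same strategy as the paper: induct along the square-zero filtration $\fraka_{n-1}/\fraka_n$ to build compatible $(G,\mu)$-isomorphisms $\phi_n$ over $A/\fraka_n$ that match $\psi\bmod\frakb^n$, then use affineness of the diagonal together with $\bigcap_n\frakb^n=0$ and $A=\bigcap_n(A+\frakb^n)$ to conclude that $\psi$ itself is a $(G,\mu)$-isomorphism defined over $A$. The only difference is presentational: for the square-zero step the paper invokes Theorem~\ref{globaldef} to phrase the obstruction as an element $N\in T_{\bar\D_2}\otimes_{A/\fraka}\fraka$ which must vanish because it injects into $T_{i(\bar\D_2)}\otimes_{A/\fraka}\frakb$ where it is killed by $\psi$, whereas you unwind this to a direct application of Theorem~\ref{GMZCF} in the banal case, matching the unique $G$- and $G'$-solutions via functoriality of $i$ and using $P_\mu=G\cap P_{\mu'}$ to force $g\in H^\mu$; these are the same argument in different packaging.
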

\begin{proof} 
We first deal with the case that $\frakb$ is nilpotent. Then, by induction 
we can assume that $\frakb^2=0$. Now we can use the map $\phi_0$ in order 
to view $\D_1$ as a lift of $\bar\D_2:=\D_2\times_AA/\fraka$, so that there exists a well-defined element $N\in T_{\bar\D_2}\otimes_{A}\fraka$ that measures the difference between the elements $\D_1$ and $\D_2$ of the 
deformation ${\rm D}_{\bar\D_2,\Spec A}$. The existence of $\psi$ implies that the image of $N$ in
$T_{i(\bar\D_2)}\otimes_{A}\frakb$ has to vanish. Since
$T_{\bar\D_2}\otimes_{A}\fraka\rightarrow 
T_{i(\bar\D_2)}\otimes_{A}\frakb$ is injective, this yields $N=0$ which implies the existence of $\phi$ over $A$. 

We now deal with the general case: The argument above gives that, for every $n\geq 1$, there is a $(G, \mu)$-display isomorphism
$\phi_n:\D_1\times_AA/(A\cap\frakb^n)
\xrightarrow{\sim}\D_2\times_AA/(A\cap \frakb^n)$ which lifts $\phi_0$ 
and is compatible with $\psi\times_B B/\frakb^n$. This, since the diagonal of $\B(G',\mu')$ is affine
by Lemma \ref{diagonal}, implies that $\psi$, as a $(G',\mu')$-display isomorphism, is actually defined over the subring 
$A+\frakb^n\subset B$ and therefore over $A=\bigcap_n (A+\frakb^n)$. Now, 
by the above, $\psi\times_A A/(A\cap \frakb^n)=\phi_n$ which 
is a $(G, \mu)$-isomorphism for all $n\geq 1$. Since 
$\bigcap_n (A\cap\frakb^n)=0$ we obtain that $\psi$ is also 
a $(G,\mu)$-isomorphism.
\end{proof}

\subsubsection{} 
We end this paragraph with two lemmas that,
roughly speaking, show that under some assumptions, certain deformations and liftings of effective display blocks are still effective. In both of these, we fix
$i:(G,\mu)\hookrightarrow(G',\mu')$ as above.

\begin{lemma}
\label{deform05}
Consider  homomorphisms $A\hookrightarrow B\twoheadrightarrow B/\frakb$ 
in ${\rm ANilp}_{W(k_0)}$. Suppose that   
$\frakb$ is nilpotent and set $\fraka:=\frakb\cap A$. Let $\D_0^\Diamond $ be a $(G, \mu)$-display over $A/\fraka$ such that $i(\D_0^\Diamond)$ is an adjoint nilpotent 
$(G',\mu')$-display. Then the assignment 
\[
\D^\Diamond\mapsto(i(\D^\Diamond),\D^\Diamond\times_AB,i(\id_{\D^\Diamond})\times_AB)
\]
 induces a canonical bijection 
from the set of isomorphism classes of deformations of the $(G,\mu)$-display $\D_0^\Diamond$ over $A$ to the set of isomorphism classes of deformations of the 
$(A/\fraka,B/\frakb)$-display block 
\[
(i(\D_0^\Diamond),\D_0^\Diamond\times_{A/\fraka}B/\frakb,i(\id_{\D_0^\Diamond})\times_{A/\fraka}B/\frakb)
\] over $(A,B)$.
\end{lemma}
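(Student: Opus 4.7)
The plan is to establish that the assignment is both injective and surjective on isomorphism classes.

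\textbf{Injectivity.} Given deformations $\D_1^\Diamond, \D_2^\Diamond$ of $\D_0^\Diamond$ over $A$ whose associated effective blocks are isomorphic, one obtains $\epsilon' : i(\D_1^\Diamond) \xrightarrow{\sim} i(\D_2^\Diamond)$ over $A$ and $\epsilon : \D_1^\Diamond \times_A B \xrightarrow{\sim} \D_2^\Diamond \times_A B$ over $B$, both reducing to the identity and satisfying $i(\epsilon) = \epsilon' \times_A B$. I would apply Lemma \ref{deform02} to the injection $A \hookrightarrow B \twoheadrightarrow B/\frakb$ (the $\frakb$-adic hypotheses are automatic since $\frakb$ is nilpotent), with $\phi_0 = \id$ and $\psi = \epsilon$, to extract a unique $(G,\mu)$-isomorphism $\phi : \D_1^\Diamond \xrightarrow{\sim} \D_2^\Diamond$ over $A$ such that $\phi \times_A B = \epsilon$. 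Since the $\underline{\rm Isom}$-scheme of $(G',\mu')$-displays is affine over $A$ by Lemma \ref{diagonal}(b) and $A \hookrightarrow B$ is injective, the equality $i(\phi) \times_A B = i(\epsilon) = \epsilon' \times_A B$ upgrades to $i(\phi) = \epsilon'$, yielding the desired isomorphism of deformations.

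\textbf{Surjectivity, reduction to square-zero.} I would induct on the nilpotence index $n$ of $\frakb$. For $n \geq 3$, set $\fraka_\ast := A \cap \frakb^{n-1}$, $A_\ast := A/\fraka_\ast$, $B_\ast := B/\frakb^{n-1}$; both $\fraka_\ast$ and $\frakb^{n-1}$ are square-zero, while the image of $\frakb$ in $B_\ast$ has nilpotence index $n-1$. The inductive hypothesis applied to $A_\ast \hookrightarrow B_\ast \twoheadrightarrow B/\frakb$ makes the reduction $(\D' \bmod \fraka_\ast,\, \D \bmod \frakb^{n-1},\, \psi \bmod \frakb^{n-1})$ effective, via some $\D_\ast^\Diamond$ over $A_\ast$ deforming $\D_0^\Diamond$. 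Then the square-zero base case, applied to $(A, B) \twoheadrightarrow (A_\ast, B_\ast)$ with initial datum $\D_\ast^\Diamond$, produces $\D^\Diamond$ over $A$ whose effective block is isomorphic to $(\D', \D, \psi)$. Reducing $\D^\Diamond$ modulo $\fraka$ recovers $\D_0^\Diamond$ by construction.

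\textbf{Base case $\frakb^2 = 0$.} Then also $\fraka^2 = 0$. Working Zariski locally on $\Spec A$, and gluing via the already-proved injectivity, Corollary \ref{globtriv} furnishes an auxiliary effective lift $\D_\ast^\Diamond$ of $\D_0^\Diamond$ over $A$. By Theorem \ref{globaldef}, any deformation $(\D', \D, \psi)$ differs from the effective block of $\D_\ast^\Diamond$ by a pair
\[
x \in \fraka \otimes_{A/\fraka} T_{i(\D_0^\Diamond)}, \qquad y \in \frakb \otimes_{B/\frakb} T_{\D_0^\Diamond \times_{A/\fraka} B/\frakb},
\]
and existence of the rigid (Corollary \ref{deform06}) isomorphism $\psi$ is equivalent to the compatibility $r(x) = i_\ast(y)$ in $\frakb \otimes T_{i(\D_0^\Diamond) \times B/\frakb}$, where $r$ is induced by the inclusion $\fraka \hookrightarrow \frakb$. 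The proof then reduces to showing that the natural square
\[
\begin{array}{ccc}
\fraka \otimes T_{\D_0^\Diamond} & \xrightarrow{\,i_\ast\,} & \fraka \otimes T_{i(\D_0^\Diamond)} \\
\big\downarrow & & \big\downarrow \\
\frakb \otimes T_{\D_0^\Diamond \times B/\frakb} & \xrightarrow{\,i_\ast\,} & \frakb \otimes T_{i(\D_0^\Diamond) \times B/\frakb}
\end{array}
\]
(with vertical maps induced by $\fraka \hookrightarrow \frakb$) is Cartesian; given such $(x,y)$ we extract a unique $z \in \fraka \otimes T_{\D_0^\Diamond}$, and $\D^\Diamond := \D_\ast^\Diamond + z$ is the sought lift.

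\textbf{Main obstacle.} The crux is verifying that the above square is Cartesian. This rests on two facts. First, $T_{\D_0^\Diamond} \hookrightarrow T_{i(\D_0^\Diamond)}$ is a split injection of locally free $A/\fraka$-modules with locally free cokernel $N$: this follows from the identification of the Hodge tangent space with the opposite unipotent Lie algebra ($T_\D \cong \fraku^-_\D$), together with the fact that for the closed immersion of smooth reductive group schemes $G \hookrightarrow G'$ over $\Z_p$ with $\mu' = i \cdot \mu$, the induced map $\fraku^-_G \hookrightarrow \fraku^-_{G'}$ is a direct summand of $W(k_0)$-modules. Second, the map $\fraka \otimes N \to \frakb \otimes N$ is injective, which follows from flatness of $N$ and the injection $\fraka \hookrightarrow \frakb$ of $A/\fraka$-modules (valid because both ideals are square-zero, so $\frakb$ is naturally an $A/\fraka$-module). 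A diagram chase on the resulting short exact sequences then produces the required $z$, completing the proof.
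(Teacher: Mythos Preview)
Your proof is correct and follows essentially the same strategy as the paper: reduce to the square-zero case by induction, then identify both deformation sets as principal homogeneous spaces under groups that turn out to coincide. The paper compresses this into three sentences, simply asserting that after the reduction the former set is a torsor under $T_{\D_0^\Diamond}\otimes_{A/\fraka}\fraka$ and the latter under $(T_{i(\D_0^\Diamond)}\otimes_{A/\fraka}\fraka)\cap(T_{\D_0^\Diamond}\otimes_{A/\fraka}\frakb)$ inside $T_{i(\D_0^\Diamond)}\otimes_{A/\fraka}\frakb$, and that these two groups agree; your Cartesian-square formulation is exactly this equality, and your verification via the split inclusion $\fraku^-_G\hookrightarrow\fraku^-_{G'}$ supplies the detail the paper omits.

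Two minor remarks. First, your separate injectivity argument via Lemma~\ref{deform02} is valid but slightly redundant: once both sides are torsors under the same group and the map is equivariant, bijectivity is automatic. Second, in that injectivity paragraph you feed $\psi=\epsilon$ into Lemma~\ref{deform02}, but the $\psi$ in that lemma must be a $(G',\mu')$-isomorphism; you should take $\psi=i(\epsilon)=\epsilon'\times_AB$ instead. The conclusion $\phi\times_AB=\epsilon$ then follows because $i$ is a closed immersion, hence faithful on display isomorphisms.
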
  
\begin{proof}
We can assume $\frakb^2=0$, by induction. In this case, Corollary \ref{globtriv} shows that both of the two sets in question are principal homogeneous spaces, 
the former one under $T_{\D_0^\Diamond}\otimes_{A}\fraka$ and the latter one under 
\[
(T_{i(\D_0^\Diamond)}\otimes_{A}\fraka)\cap (T_{\D_0^\Diamond}\otimes_{A}\frakb),
\] the intersection taking place in 
$T_{i(\D_0^\Diamond)}\otimes_{A}\frakb$. We get the result as these two groups are equal.
\end{proof}

\begin{lemma}
\label{deform01}
Consider  homomorphisms $A\hookrightarrow B\twoheadrightarrow B/\frakb$ 
in ${\rm ANilp}_{W(k_0)}$. Assume that $(A, \frakm)$ 
is a complete local Noetherian ring, and that one of the following two assertions holds:
\begin{itemize}
\item[(i)]
There exists a regular sequence $(f, g)$ in $\frakm$ such that $B=A[g^{-1}]$ and $\frakb$ is generated by $f$.
\item[(ii)]
$B$ is a finitely generated $A$-module and $\frakb=J(B)$.
\end{itemize}
Let $(\D',\D,\psi)$ be a $(A,B)$-display block with $\D'$ adjoint nilpotent whose
 reduction modulo $\frakb$ is effective and given by a  $(G,\mu)$-display over $A/A\cap \frakb$ which is banal 
 over $A/\frakm$.  Then $(\D',\D,\psi)$ is effective.
\end{lemma}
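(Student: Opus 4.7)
The plan is to construct the desired $(G,\mu)$-display $\D^\Diamond$ over $A$ as an inverse limit, via Proposition \ref{limitdisplay}, of a compatible system $\{\D_n^\Diamond\}_{n\geq 1}$ of $(G,\mu)$-displays over $A/\frakm^n$ with compatible isomorphisms $\alpha_n\colon i(\D_n^\Diamond)\iso \D'\otimes_A A/\frakm^n$ whose reductions modulo $\frakb':=A\cap\frakb$ recover $\bar\D^\Diamond\otimes A/(\frakm^n+\frakb')$. Given such a system, $\D^\Diamond\otimes_AB$ over $B$ shares the same $i$-image as $\D$ (both isomorphic to $\D'\otimes B$ via $\psi$) and the same reduction modulo $\frakb$ (both isomorphic to $\bar\D^\Diamond\otimes B/\frakb$), so Lemma \ref{deform02} supplies a unique $(G,\mu)$-isomorphism $\D^\Diamond\otimes_AB\iso\D$ compatible with $\psi$, yielding effectiveness. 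The base case $n=1$ takes $\D_1^\Diamond:=\bar\D^\Diamond\otimes A/\frakm$, which is banal by hypothesis.

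For the inductive step, Theorem \ref{globaldef} identifies lifts of $\D_n^\Diamond$ to $A/\frakm^{n+1}$ with a torsor under the $A/\frakm$-vector space $V_n:=T_{\D_n^\Diamond}\otimes_{A/\frakm}(\frakm^n/\frakm^{n+1})$, and lifts of $i(\D_n^\Diamond)$ with a torsor under $V_n':=T_{i(\D_n^\Diamond)}\otimes(\frakm^n/\frakm^{n+1})$. The inclusion $V_n\hookrightarrow V_n'$ is induced by $\mathrm{Lie}(U_{\mu^{-1}})\hookrightarrow \mathrm{Lie}(U_{(\mu')^{-1}})$, with cokernel $W_n$ a finite-dimensional $A/\frakm$-vector space. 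The specific lift $\D'\otimes A/\frakm^{n+1}$ determines a class $c_n\in V_n'$, and the inductive step succeeds precisely when its image $\bar c_n\in W_n$ vanishes: the unique preimage of $c_n$ in $V_n$ then provides $\D_{n+1}^\Diamond$ with its $\alpha_{n+1}$, and the required compatibility modulo $\frakb'$ is automatic from the effectiveness of the $\frakb$-reduction combined with the analogous injectivity at the $A/(\frakm+\frakb')$-level.

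The heart of the proof is the vanishing $\bar c_n=0$. In case (ii), $A\hookrightarrow B$ with $B$ finitely generated over $A$ forces $\frakm B\neq B$ by Nakayama, hence $\frakm B\cap A=\frakm$ and $A/\frakm\hookrightarrow B/\frakm B$ is injective; since $W_n$ is an $A/\frakm$-vector space, $W_n\hookrightarrow W_n\otimes_{A/\frakm}(B/\frakm B)$ is injective. The base change of $c_n$ to $B/\frakm^{n+1}B$ is realized by the $(G,\mu)$-lift $\D\otimes B/\frakm^{n+1}B$ of $\D'\otimes B/\frakm^{n+1}B$ via $\psi$, so its image in $W_n\otimes_{A/\frakm}(B/\frakm B)$ vanishes, forcing $\bar c_n=0$. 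In case (i) one has $\frakm B=B$ (since $g\in\frakm$ is a unit in $B=A[g^{-1}]$), so this base-change argument trivializes. Instead, one runs the analogous induction along the $f$-adic filtration of $A$: regularity of $(f,g)$ implies $g$ is a nonzerodivisor on every $A/f^nA$, so $A/f^nA\hookrightarrow B/f^nB=(A/f^nA)[g^{-1}]$ is injective and the $f$-adic obstructions vanish by the same mechanism. The resulting $f$-adic compatible system is then patched with the $\frakm$-adic target data via a Mayer--Vietoris / Beauville--Laszlo descent along the regular sequence $(f,g)$, using $\frakm$-adic completeness of $A$ to close up. The main obstacle is precisely this reconciliation in case (i): the obstruction-detecting filtration ($f$-adic) and the filtration suited to invoke Proposition \ref{limitdisplay} ($\frakm$-adic) differ, and their interleaving is delicate. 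The banality of $\bar\D^\Diamond$ at the residue field, together with the explicit trivializations it affords throughout the induction, is what makes this interleaving possible.
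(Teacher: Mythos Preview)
Your overall architecture matches the paper's: build a compatible system of $(G,\mu)$-displays on quotients of $A$, pass to the limit via Proposition~\ref{limitdisplay}, then identify the result with $\D$ over $B$. But there is one genuine confusion and one imprecision worth fixing.

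\medskip

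\textbf{The spurious Beauville--Laszlo step in case (i).} You assert that the $f$-adic filtration (for obstruction vanishing) and the $\frakm$-adic filtration (for taking the limit) must be reconciled by some patching argument. This is not so. A complete Noetherian local ring $(A,\frakm)$ is automatically $(f)$-adically complete for any $f\in\frakm$: given $(a_n)\in\varprojlim A/f^nA$, lifts $\tilde a_n$ satisfy $\tilde a_{n+1}-\tilde a_n\in f^nA\subset\frakm^n$, so they converge $\frakm$-adically to some $a\in A$, and $a\equiv \tilde a_n\bmod f^nA$ because $f^nA$ is $\frakm$-adically closed. Hence Proposition~\ref{limitdisplay} applies directly with $I=(f)$, and your $f$-adic compatible system already produces $\D^\Diamond$ over $A$ with no further descent. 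The paper treats both cases uniformly by using the single filtration $\fraka_n:=A\cap\frakb^n$ (equal to $f^nA$ in case~(i), cofinal with $\frakm^n$ in case~(ii) by Artin--Rees), observes that $A=\varprojlim A/\fraka_n$ for the same reason, and takes the limit once.

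\medskip

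\textbf{The missing $B$-side datum in your induction.} Your obstruction argument for $\bar c_n=0$ appeals to the $(G,\mu)$-lift $\D\otimes B/\frakm^{n+1}B$. For this to be a lift of $i(\D_n^\Diamond)\otimes B/\frakm^nB$ in the relevant sense, you need an isomorphism $\beta_n:\D_n^\Diamond\otimes B/\frakm^nB\xrightarrow{\sim}\D\otimes B/\frakm^nB$ compatible with $\alpha_n$ and $\psi$; you must carry $\beta_n$ through the induction and lift it to $\beta_{n+1}$ along with $\alpha_{n+1}$. Once you add this, your inductive step is exactly the content of Lemma~\ref{deform05} applied to $(A/\fraka_{n+1},B/\frakb^{n+1})$, which is what the paper invokes directly rather than re-deriving.

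\medskip

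\textbf{The final identification.} Your appeal to Lemma~\ref{deform02} (with both rings in that lemma taken equal to $B$, using that $B$ is $\frakb$-adically separated) is legitimate. The paper instead passes to $\hat B=\varprojlim B/\frakb^n$, obtains the block isomorphism there, and then observes that the two $L^+G$-subtorsors of the common $L^+G'$-torsor over $B$ agree after the injective base change $B\hookrightarrow\hat B$, hence agree over $B$ because $G'/G$ is affine. Either route works.
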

\begin{proof}
Let us write $\fraka_n$ for the intersection $A\cap\frakb^n$. In case (ii) the Artin-Rees lemma implies that 
the ideals $\fraka_n$ form a basis of neighborhoods for the $\frakm$-adic topology on $A$.
In case (i) each $\fraka_n$ 
is generated by $f^n$. Thus, in both cases $B$ is $\frakb$-adically separated and $A$ is $\frakb$-adically closed as a subset of $B$, as 
$\fraka_n$ tends $\frakm$-adically towards $0_A$. By Lemma \ref{deform05} we can choose a sequence of suitable adjoint nilpotent $(G,\mu)$-displays $\D_n^\Diamond$ over $A/\fraka_n$ and isomorphisms:
$$
(i(\D_n^\Diamond),\D_n^\Diamond\times_{A/\fraka_n}B/\frakb^n,i(\id_{\D_n^\Diamond})\times_{A/\fraka_n}B/\frakb^n)
\cong(\D',\D,\psi)\ \mathrm{mod\ } \frakb^n.
$$ 
Using the existence of the universal deformation 
over the formal deformation space of the (banal) $(G,\mu)$-display $\D_1^\Diamond\times_{A/A\cap\frakb}A/\frakm$ over $A/\frakm$ 
we can now construct a ``limit'' $(G,\mu)$-display 
$\varprojlim_n\D_n^\Diamond=\D^\Diamond$ over $A=\varprojlim_n A/\fraka_n$
(cf. Proposition \ref{limitdisplay}). Passing to the limit we get:
$$
(i(\D^\Diamond),\D^\Diamond\times_A\hat B,i(\id_{\D^\Diamond})\times_A\hat B)
\cong(\D',\D\times_B\hat B,\psi\times_B\hat B),
$$
where $\hat B=\varprojlim_n B/\frakb^n$. Since $B$ is $\frakb$-adically separated $B\hookrightarrow\hat B$.
The above isomorphisms allows us to identify the $L^+G$-torsors corresponding to the two displays
$\D$ and $\D^\Diamond\times_A B$ over $B$ as two $L^+G$-equivariant subschemes of the $L^+G'$-torsor 
corresponding $\D'\times_A B$ 
which are the same after base changing by $B\to \hat B$.
Using that $G'/G$ is represented by an affine scheme  (\cite[6.12]{ColliotSansuc}) we can see that this implies that these subschemes
 are equal.
 This gives an isomorphism 
$$
(i(\D^\Diamond),\D^\Diamond\times_AB,i(\id_{\D^\Diamond})\times_AB)
\cong(\D',\D,\psi),
$$
which shows that $(\D',\D,\psi)$ is effective.
\end{proof}

\section{Rapoport-Zink spaces}\label{sec:RZ}

We now give our definition of the Rapoport-Zink stack and functor, state
the representability conjecture and prove our main result on representability
in the Hodge type case.

\subsection{Local Shimura data}

Let $G$ be   a connected 
reductive group scheme  over $ \Z_p $.
We follow \cite{RapoportVi} and \cite{HP2}.

\subsubsection{}
\label{general}

Let $([b],  \{\mu\} )$ be a pair consisting of:

\begin{itemize}
\item a $G(\bar K)$-conjugacy class $\{\mu\}$ of  cocharacters $\mu: \Gm_{\bar K}\to G_{\bar K}$,

\item  a $\sigma$-conjugacy class $[b]$ of elements  $b\in G(K)$; here, as usual, $b$ and $b'$
are $\sigma$-conjugate if there is $g\in G(K)$ with $b'=gb\sigma(g)^{-1}$.
\end{itemize}

  We let $E \subset \bar{K}$ be the field of definition of the conjugacy class $\{\mu\}$. This is the 
\emph{local reflex field.}
Denote by $\co_E$ its valuation ring and by $k_E$ its (finite) residue field. 
In fact, under our assumption on $G$, the field $E\subset \bar K$ is 
contained in $K$ and there is a cocharacter 
$
\mu: \Gm_E\to G_E
$ 
in the conjugacy class $\{\mu\}$ that is  defined over $E$; see \cite[Lemma (1.1.3)]{KottTwisted}.
In fact, we can find a representative  $\mu$ that extends to an integral cocharacter 
\begin{equation}\label{integral mu rep}
\mu: \Gm_{\co_E}\to G_{\co_E},
\end{equation}
and the $G(\co_E)$-conjugacy class of such an $\mu$ is well-defined. 
In what follows, we usually assume that $\mu$ is such a representative.
 We can identify $\co_E$ with the ring of Witt vectors $W(k_E)$ and we have $E=W(k_E)[1/p]$.

We  write $\mu^\sigma=\sigma(\mu)$ for the Frobenius conjugate of (\ref{integral mu rep}).

\begin{definition} (cf.~\cite[Def. 5.1]{RapoportVi})
A  {\it local  unramified  Shimura datum}  is a triple 
$(G, [b], \{\mu\})$, in which  $G$ is a connected reductive group
over $\Z_p$, the pair $([b], \{\mu\})$ is as above, and we assume
\begin{enumerate}
\item  $\{\mu\}$ is minuscule,  

\item  for some (equivalently, any)   integral representative  (\ref{integral mu rep})  of $\{\mu\}$,
the $\sigma$-conjugacy class $[b]$ has a representative  
\begin{equation}\label{b coset}
 b\in G(W)\mu^\sigma(p)G(W).
 \end{equation}
\end{enumerate}  
 
\end{definition}

By \cite[Theorem 4.2]{RapoportRich}, assumptions (i) and (ii) imply that $[b]$ lies in the set $B(G_{\q_p}, \{\mu\})$ 
of neutral acceptable elements for $\{\mu\}$; see \cite[Definition 2.3]{RapoportVi}. 
In particular, $(G_{\q_p}, [b], \{\mu\})$ is a local Shimura datum in the sense of  \cite[Def. 5.1]{RapoportVi}.

\subsection{Definitions and a representability conjecture}\label{defRZ}

Fix a local  unramified  Shimura datum $(G, [b], \{\mu\})$ and choose
an integral representative $\mu$ of the conjugacy class $\{\mu\}$ as 
in (\ref{integral mu rep}). Choose a representative $b$ of the $\sigma$-conjugacy class
$[b]$ such that $b=u\mu^\sigma(p)$, with $u\in G(W)$.
Consider the $(G,\mu)$-display $\D_0=(P_0, Q_0, u_0)$
over $k$ given by $P_0=L^+G_W$, $Q_0=H^\mu$, and $u_0: H^\mu\to L^+G_W$ given as the composition of the inclusion followed by
left multiplication by $u$.

Suppose that $S$ is a scheme in ${\rm Nilp}_W$ and consider pairs $(\D, \rho)$ with
\begin{itemize}
\item $\D$ a $(G,\mu)$-display over $S$,
\item $\rho: \bar\D \dashrightarrow \D_0\times_k \bar S $ a $G$-quasi-isogeny over $\bar S$.
\end{itemize}
(In the above, $\bar S$ and $\bar\D$ denote the reductions of $S$ and $\D$ modulo $p$.)
Consider the natural notion of isomorphism between two such pairs.

We will denote by ${\calRZ}_{G, \mu, \D_0}$  the  stack of groupoids over ${\rm Nilp}_{W}$ that classifies pairs $(\D, \rho)$ as above. We can see that this is a fpqc stack.
Denote by $\RZ_{G,\mu, b}$ the corresponding functor ${\rm Nilp}_W\to {\rm Sets}$ which sends
$S$ to the isomorphism classes of pairs $(\D,\rho)$ over $S$ as above.

Consider the group of automorphisms $ {\rm Aut}(\D_0[1/p])$ of the $G$-isodisplay given by $\D_0$. We can see that
\[
{\rm Aut}(\D_0[1/p])\simeq J_b(\q_p):=\{j\in G(L)\ |\ j^{-1}b\sigma(j)=b\}.
\]
This group acts ${\calRZ}_{G, \mu, \D_0}$ and on the functor $\RZ_{G,\mu, b}$ on the left by
\[
j\cdot (\D, \rho)=(\D, j\cdot \rho).
\]

We can now state:
 
\begin{conjecture}\label{conjmain}
 Assume that $-1$ is not a  slope of ${\rm Ad}^G(b)$.
The functor $\RZ_{G,\mu, b}$ 
is representable by a formal scheme which is formally smooth and formally locally of finite type
over $W$.
\end{conjecture}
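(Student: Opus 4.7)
The plan is to construct the formal scheme by producing formal neighborhoods at each $k$-valued point via the deformation theory of \S\ref{deform04} and then gluing them together. Recall that
\[
\RZ_{G,\mu,b}(k)=X_{\mu,b}:=\{g\in G(L)\mid g^{-1}b\sigma(g)\in G(W)\mu^\sigma(p)G(W)\}/G(W),
\]
so this affine Deligne--Lusztig set indexes the closed points of the hoped-for formal scheme. For each $x\in X_{\mu,b}$ represented by $g\in G(L)$, the associated pair $(\D_x,\rho_x)$ has $\D_x$ banal and described by $U:=g^{-1}b\sigma(g)\mu^\sigma(p)^{-1}\in G(W)$. Since $U\mu^\sigma(p)=g^{-1}b\sigma(g)$ is $\sigma$-conjugate to $b$, the slopes of $\Ad^G(U\mu^\sigma(p))$ coincide with those of $\Ad^G(b)$, so the hypothesis that $-1$ is not a slope of $\Ad^G(b)$ propagates to every point of $\RZ_{G,\mu,b}$; by Lemma \ref{stressful03} each $\D_x$ is therefore adjoint nilpotent.

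With global adjoint nilpotence established, the deformation theory applies at every $k$-point. The universal deformation construction immediately preceding Corollary \ref{globtriv} shows that $\D_x$ admits a formal universal deformation prorepresented by $A_x:=W\lps t_1,\ldots,t_r\rps$ with $r=\dim_W\fraku^-=\dim X_\mu$. Moreover, the quasi-isogeny $\rho_x$ lifts uniquely over this neighborhood: the rigidity of automorphisms of adjoint nilpotent displays (Corollary \ref{deform06}) and faithfulness up to isogeny on almost Frobenius-separated algebras (Proposition \ref{GNN01}) imply that deformations of the pair $(\D_x,\rho_x)$ coincide with deformations of $\D_x$ alone. This produces, for each $x$, a formally smooth morphism $\Spf A_x\to \RZ_{G,\mu,b}$ that induces an isomorphism on formal completions at the closed point, which simultaneously accounts for formal smoothness and the formally locally of finite type property at every $k$-point.

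The principal obstacle is assembling these charts into a single formal $W$-scheme. In the Hodge type setting this is bypassed by choosing a closed immersion ${\calRZ}_{G,\mu,b}\hookrightarrow {\calRZ}_{\GL_n,\mu',i(b)}$ and invoking representability of the target via Zink--Lau and Rapoport--Zink, but in general no such ambient representable object exists. The plan would be to first show that $\RZ_{G,\mu,b}$ is a Zariski sheaf on Noetherian algebras in $\ANilp_W$, and then prove that each $\Spf A_x\to\RZ_{G,\mu,b}$ is an open immersion, with the images jointly covering. Both steps hinge on a descent statement for $(G,\mu)$-displays equipped with a quasi-isogeny that generalizes Proposition \ref{analog} beyond embeddings into $\GL_n$; this is the hard step, because given a generization inside one chart specializing to a new $k$-point $y$ one must identify it canonically with the chart at $y$ without passing through a linear intermediary. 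A plausible attack is to combine the Witt vector affine Grassmannian picture of \cite{ZhuWitt} and \cite{BhattScholzeWitt}, which yields a representable perfect scheme structure on the special fiber, with the formal smoothness and rigidity from the previous paragraph to lift to a genuine formal scheme; the delicate point is controlling compatibility of the lifts on infinitesimal thickenings when $G$ admits no minuscule faithful representation of the required form.
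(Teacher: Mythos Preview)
The statement you are addressing is Conjecture~\ref{conjmain}, and the paper does \emph{not} prove it in general. What the paper proves is the restricted statement Theorem~\ref{hodgemain}: representability on locally Noetherian schemes under the additional hypothesis that $(G,[b],\{\mu\})$ is of Hodge type with a Hodge embedding $i$ such that $i(b)$ has no slope~$1$. So there is no ``paper's own proof'' of the conjecture to compare against; the honest summary is that your proposal is an outline of an attack on an open problem.

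That said, your outline is accurate about where the difficulty lies. The identification of $k$-points with the affine Deligne--Lusztig set, the propagation of adjoint nilpotence to every point via $\sigma$-conjugacy, and the prorepresentability of the deformation functor at each point by $W\lps t_1,\dots,t_r\rps$ are all correct and are exactly what the paper establishes in \S\ref{deform04} and Proposition~\ref{ADL}. One small correction: the lifting of the quasi-isogeny along infinitesimal thickenings is handled in the paper not via Corollary~\ref{deform06} and Proposition~\ref{GNN01} but by the more elementary observation in \S\ref{deformRZ} that $W(R)[1/p]=W(R_0)[1/p]$ when $R_0=R/I$ with $I$ nilpotent, so the quasi-isogeny datum is insensitive to nilpotents.

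The genuine gap is precisely the one you name: gluing the formal charts $\Spf A_x$ into a formal scheme. Your proposed route---show $\RZ_{G,\mu,b}$ is a Zariski sheaf, then prove each $\Spf A_x\to\RZ_{G,\mu,b}$ is an open immersion---would require, among other things, knowing that two formal neighborhoods agree on overlaps, and this is exactly what the paper cannot do without the crutch of an ambient representable functor. The paper's entire strategy in \S5 (Proposition~\ref{analog}, Corollary~\ref{closedRZ}) is to manufacture such an ambient object via a Hodge embedding and then prove the comparison morphism is a closed immersion; the nine-step descent argument in Proposition~\ref{analog} is the technical heart, and it uses the embedding $i:(G,\mu)\hookrightarrow(G',\mu')$ in an essential way at every step. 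Your suggestion to use the Witt vector affine Grassmannian of \cite{ZhuWitt,BhattScholzeWitt} to get a scheme structure on the perfection of the special fiber is reasonable, but as the introduction of the paper already notes, those techniques only see perfect algebras and give no control over infinitesimal thickenings, so lifting that structure to a formal scheme over $W$ is exactly the unresolved problem. In short: your diagnosis of the obstacle is right, but nothing in the proposal closes it, and the paper does not claim to either.
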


Here,  representability by a formal scheme is in the sense explained in \cite{RapZinkBook}.
For $G=\GL_n$ and for $b$ with no slopes equal to $0$, the conjecture follows by a combination of the results of Rapoport-Zink and Zink and Lau: By Lau and Zink and the discussion of \ref{ZinkDisplays} (see also  \ref{glnexample}, \ref{glnQiso} and \ref{stressful01}) the functor for $\GL_n$ is equivalent to the Rapoport-Zink functor of deformations up to quasi-isogeny of the
$p$-divisible group over $k$ that corresponds to $\D_0$.  
For local unramified Shimura data of Hodge type, we will prove this conjecture for the restriction of the functor $\RZ_{G,\mu, b}$ to locally Noetherian schemes in ${\rm Nilp}_W$, see Theorem \ref{hodgemain}.

For general $(G, \mu)$ and $b$ as above, it follows easily from Proposition \ref{GNN01} that, if $B$ is Noetherian, then the objects of the groupoid ${\calRZ}_{G,\mu,\D_0}(B)$ have no automorphisms.

\begin{remark}\label{remarkDescent}
We refer to \cite[Def. 3.45]{RapZinkBook} for the notion of a Weil descent datum relative to the extension $W/\O_E$ for a functor on the category ${\rm Nilp}_W$. Set $f=[E: \q_p]$.
(Recall that $E/\q_p$ is finite and unramified.) Denote by $ \tau: \Spec(\O_E)\to \Spec(\O_E)$ the morphism induced by     $\sigma^f: \O_E\to \O_E$ and by $\bar\tau$ its reduction $\bar\tau: 
\Spec(k_E)\to \Spec(k_E)$. Using the construction of loc. cit. 3.48, we can define a Weil descent datum on $\RZ_{G,\mu, b}$ (relative to $\tau$). This datum is given by the isomorphism of functors $\alpha: \RZ_{G,\mu, b}\to  \RZ_{G,\mu, b}^{ \tau }$ (see loc. cit.) obtained by sending the pair $(\D, \rho)$ to the pair of $ \tau^*\D$ together with the $G$-quasi-isogeny 
\[
\bar\tau^*\bar\D\dashrightarrow \bar\tau^*(\D_0\times_k \bar S)\dashrightarrow \D_0\times_k \bar S
\]
where the first arrow is $\bar\tau^*(\rho)$ and the second arrow is given by the relative Frobenius of $\D_0$. %We expect that when $\RZ_{G,\mu, b}$ is representable by a formal scheme as in Conjecture \ref{conjmain}, the above descent datum $\alpha$ is effective and corresponds to descending to a formal scheme over $\O_E$.
\end{remark}

\subsubsection{} Suppose that $(\D, \rho)$ is as above with $S=\Spec(R)$ affine and $\D$ banal;
then $\D$ is determined by $U\in L^+G(R)$; the corresponding  $G$-isocrystal
over $\bar R$ is given by $U\mu^\sigma(p)$. The $G$-quasi-isogeny $\rho$ is given by left multiplication by $g\in LG(\bar R)$ which satisfies the identity
\begin{equation}
bF(g)=u\mu^\sigma(p)F(g)=gU\mu^\sigma(p)
\end{equation}
in $LG(\bar R)=G(W(\bar R)[1/p])$. Note that since $p$ is nilpotent in $R$, the ideal $W((p))\subset W(R)$ is $p$-power torsion  and so
\[
W(R)[1/p]=W(\bar R)[1/p].
\]
We conclude that in the banal case, the pair $(\D, \rho)$ is given by 
a pair $(U, g)\in L^+G(R)\times LG(R)$ which satisfies 
\begin{equation}\label{equationUg}
g^{-1}bF(g)=U\mu^\sigma(p).
\end{equation}
By the definitions, two pairs $(U', g')$, $(U, g)$ give isomorphic pairs $(\D, \rho)$, $(\D',\rho')$, 
when there exists $h\in H^\mu(R)$ such that
 \begin{equation}
(U', g')=(h^{-1}\cdot U\cdot  \Phi_{G,\mu}(h), g\cdot h).
  \end{equation}
  This implies that $\RZ_{G,\mu, b}$ is given by the isomorphism classes of objects of the
  (fpqc, or \'etale) quotient stack 
  \[
 [ (L^+G\times_{LG,\mu, b} LG)/ H^\mu].
  \]
 Here the fiber product is 
  $$
\xymatrix{{L^+G\times_{LG,  \mu, b}LG}\ar[r]\ar[d] & {LG}\ar[d]^{c_{b}}\\
{L^+G}\ar[r]&{LG}}
$$
with $c_{b}(g):=g^{-1}bF(g)$ and the bottom horizontal map is the natural
map followed by right multiplication by $\mu^\sigma(p)\in G(E)\subset LG(R)$. 
The quotient is for the action of $H^\mu$ given by
\[
(U, g)\cdot h=(h^{-1}\cdot U\cdot  \Phi_{G,\mu}(h), g\cdot h).
\]

\subsubsection{}\label{deformRZ}
 Continue with the set-up above and assume that $I\subset R$ is a nilpotent ideal.
Set $R_0=R/I$. Then, since $W(R)[1/p]=W(R_0)[1/p]$, the pair $(U, g)$ is determined by $(U, g_0)$.
We can use this to deduce that for any $(\D_0,\rho_0)$ over $R_0$, the forgetful functor 
$\calRZ_{G,\mu,\D_0}\to \B(G,\mu)$, $(\D,\rho)\mapsto \D$,
induces an equivalence of deformation functors ${\rm D}_{(\D_0,\rho_0), S}\xrightarrow{\sim} {\rm D}_{\D_0, S}$.
In particular, our results in \S \ref{deform04} apply to the deformation theory of $\calRZ_{G,\mu, \D_0}$.

\subsubsection{} Let $k'$ be an algebraically closed field extension of $k$ and set
 $W'=W(k')$, $K'=W'[1/p]$. In this case, since $W(k')$ is torsion free, the equation $g^{-1}bF(g)=U\mu^\sigma(p)$
shows that $U$ is determined from $g$.
By the above discussion, we have
\begin{equation}\label{RZk}
\RZ_{G,\mu, b}(k')=\{g\in G(K')\ |\ g^{-1}bF(g)\in G(W')\mu^\sigma(p)\}/H^\mu(k'),
\end{equation}
where $H^\mu(k')\subset G(K')$ acts on $G(K')$ on the right.
Since $k'$ is perfect, we have $I(k')=pW(k')$. This gives
\[
H^\mu(k')=G(W')\cap \mu(p)^{-1}G(W')\mu(p).
\]
(Hence, the group $H^\mu(k')$ is equal to group of  $W'$-valued points of a parahoric subgroup scheme
defined over $W(k_0)$.)

\begin{proposition}\label{ADL} We have a bijection
\[
\RZ_{G,b,\mu}(k')\cong \{g\in G(K')\, |\, g^{-1}bF(g)\in G(W')\mu^\sigma(p)G(W')\}/G(W')
\]
where the quotient is  for the natural right action of $G(W')$ on $G(K')$.
\end{proposition}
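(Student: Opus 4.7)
The plan is to show directly that the natural map
\[
\{g\in G(K')\mid g^{-1}bF(g)\in G(W')\mu^\sigma(p)\}/H^\mu(k')\ \longrightarrow\ \{g\in G(K')\mid g^{-1}bF(g)\in G(W')\mu^\sigma(p)G(W')\}/G(W'),
\]
induced by the identity on representatives (and by the inclusion $H^\mu(k')\subset G(W')$) is a bijection. Well-definedness is immediate: the numerator on the left embeds in the numerator on the right, and if $g'=gh$ with $h\in H^\mu(k')\subset G(W')$, then $g$ and $g'$ become equal in the quotient on the right.

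For injectivity, suppose $g,g'$ are both in the left-hand numerator and $g'=gh$ for some $h\in G(W')$. Writing $g^{-1}bF(g)=U\mu^\sigma(p)$ and $g'^{-1}bF(g')=U'\mu^\sigma(p)$ with $U,U'\in G(W')$, the relation $g'^{-1}bF(g')=h^{-1}\cdot g^{-1}bF(g)\cdot F(h)$ yields
\[
F(h)=\mu^\sigma(p)^{-1}\,U^{-1}hU'\,\mu^\sigma(p).
\]
Since $U,U',h\in G(W')$ and $\mu^\sigma(p)=F(\mu(p))$ (because $\mu$ is defined over $W(k_0)$ so $\sigma\circ\mu=F\circ\mu$ on $p$), we get $F(h)\in F(\mu(p)^{-1}G(W')\mu(p))\cap G(W')$. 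Since $k'$ is algebraically closed (hence perfect), $F$ is an automorphism of $W'$ and hence of $G(W')$, so we can apply $F^{-1}$ and conclude $h\in \mu(p)^{-1}G(W')\mu(p)\cap G(W')=H^\mu(k')$, using the description of $H^\mu(k')$ recalled just before the proposition.

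For surjectivity, take $g\in G(K')$ with $g^{-1}bF(g)\in G(W')\mu^\sigma(p)G(W')$, and write $g^{-1}bF(g)=U\mu^\sigma(p)V$ with $U,V\in G(W')$. Set $h:=F^{-1}(V^{-1})\in G(W')$ (again using that $F$ is bijective on $G(W')$) and $g':=gh$. Then
\[
g'^{-1}bF(g')=h^{-1}\bigl(U\mu^\sigma(p)V\bigr)F(h)=F^{-1}(V)\cdot U\,\mu^\sigma(p)\cdot V\cdot V^{-1}=F^{-1}(V)U\cdot\mu^\sigma(p),
\]
and $F^{-1}(V)U\in G(W')$. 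Thus $g'$ lies in the left-hand numerator and represents the same right-hand class as $g$. This exhibits the map as surjective and completes the proof. There is no real obstacle; everything rests on the identity $\mu^\sigma(p)=F(\mu(p))$ and on the bijectivity of $F$ on $W(k')$.
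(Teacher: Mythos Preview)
Your proof is correct and follows essentially the same approach as the paper. The paper's proof simply invokes the argument of Proposition~\ref{displaysoverk} (on isomorphism classes of $(G,\mu)$-displays over $k$) to show that the natural map $gH^\mu(k')\mapsto gG(W')$ restricts to a bijection between the two sets; you carry out that same injectivity/surjectivity computation explicitly, using the identity $\mu^\sigma(p)=F(\mu(p))$ and the bijectivity of $F$ on $G(W')$ in exactly the way the proof of Proposition~\ref{displaysoverk} does.
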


The right hand side of the identity in this statement is, by definition, the affine Deligne-Lusztig set 
\[
X_{G,\mu^\sigma, b}(k') \subset G(K')/G(W')
\]
 for the data $(G,\mu^\sigma, b)$ and the field $k'$.

\begin{proof}
This follows from \ref{RZk} and an argument as in the proof of Proposition \ref{displaysoverk} (or \cite[Proposition 2.4.3 (i)]{HP2}) which shows that 
the map 
\[
G(K')/H^\mu(k')\to G(K')/G(W'); \ \ gH^\mu(k')\mapsto gG(W'),
\]
restricts to a bijection between the set in the right side of \ref{RZk} and $X_{G,\mu^\sigma, b}(k')$.
\end{proof}

\begin{remark}
By \cite[Proposition 2.4.3 (iii)]{HP2} , the map $g\mapsto \sigma^{-1}(b^{-1}g)$ gives a bijection 
$X_{G,\mu^\sigma, b}(k')\xrightarrow{\sim} X_{G, \mu, b}(k')$ and so we also have
\[
\RZ_{G, \mu, b}(k')\cong X_{G, \mu, b}(k').
\]
\end{remark}

\section{Rapoport-Zink spaces of Hodge type}

\subsection{Hodge type local Shimura data}

The two definitions below are slight variants of definitions in \cite{HP2}.

\begin{definition}

The local unramified Shimura datum  $(G, [b], \{\mu\})$  is of {\sl Hodge type} if there exists a closed group scheme embedding 
$
\iota : G\hookrightarrow \GL( \Lambda),
$
for  a free $\Z_p$-module $ \Lambda$ of finite rank, with the following property: 
After a choice of basis $ \Lambda_{\co_E}\iso  \co_E^n$, the composite
\[
\iota \circ \mu: \Gm_{\co_E}\to {\rm GL}_{n, \co_E}
\] 
is the minuscule cocharacter $\mu_{r,n}(a)={\rm diag}(1^{(r)}, a^{(n-r)})$, for some $1\leq r< n$. 
 \end{definition}

 \begin{definition}\label{def:shimura-hodge} 
Let $(G, [b], \{\mu\})$ be a  local unramified Shimura datum  of Hodge type. 
A \emph{local  Hodge embedding datum} for $(G, [b], \{\mu\})$ consists of 
\begin{itemize}
\item
 a group scheme embedding $\iota : G\hookrightarrow \GL( \Lambda)$ as above, 
\item
the $G(W)$-$\sigma$-conjugacy class $\{gb\sigma(g)^{-1} : g\in G(W)\}$ of a representative $b \in G(W)\mu^\sigma(p)G(W) $  of $[b]$  as in (\ref{b coset}).  
\end{itemize}
\end{definition}

Notice that the  $G(W)$-conjugacy class of an integral representative $\mu$ as in  (\ref{integral mu rep}) is  determined from $\{\mu\}$.

We will refer to the quadruple $(G, b, \mu,  \Lambda)$, where $\mu$ is given up to $G(W)$-conjugation, and $b$ up to $G(W)$-$\sigma$-conjugation, as a {\sl local unramified Shimura-Hodge datum}.

 The following is the main result of the paper:
 
 \begin{theorem}\label{hodgemain}
 Assume that  $(G, [b], \{\mu\})$  is a local unramified Shimura datum of Hodge type
with a local Hodge embedding datum such that $\iota(b)$ has no slope $0$. Then the restriction of the functor $\RZ_{G,\mu, b}$ to 
 locally Noetherian schemes in ${\rm Nilp}_W$
is representable by a formal scheme which is formally smooth and formally locally of finite type
over $W$.
 \end{theorem}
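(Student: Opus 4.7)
The plan is to deduce the theorem from the $\GL$-case via the Hodge embedding. The embedding $\iota:(G,\mu)\hookrightarrow(\GL(\Lambda),\mu_{r,n})$ induces, by \ref{twopairs}, a morphism of fpqc stacks
\[
\iota_*\colon \calRZ_{G,\mu,b}\longrightarrow\calRZ_{\GL(\Lambda),\mu_{r,n},\iota(b)}.
\]
Under the assumption that $\iota(b)$ has no slope $1$, Remark~\ref{stressful01} shows that each Zink display underlying an object of the target satisfies Zink's nilpotence condition. Via the equivalence of Example~\ref{glnexample} and the compatibility of $\GL_n$-quasi-isogenies from Remark~\ref{glnQiso}, the target is then identified with the classical Rapoport-Zink functor for deformations by quasi-isogeny of formal $p$-divisible groups; by the combined results of Rapoport-Zink, Zink, and Lau it is representable by a formal $W$-scheme $\mathcal M$ which is formally smooth and formally locally of finite type. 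The same slope hypothesis, together with the observation recalled in \ref{embeddingpair}, implies that every $(G,\mu)$-display arising in $\calRZ_{G,\mu,b}$ is itself adjoint nilpotent, which will be needed in the deformation step.

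The central step is to show that the restriction of $\iota_*$ to locally Noetherian algebras $B$ in $\mathrm{Nilp}_W$ is relatively representable by a closed immersion. This is the announced Corollary~\ref{closedRZ}; concretely, given $(\D',\rho')\in\calRZ_{\GL(\Lambda),\mu_{r,n},\iota(b)}(B)$, one must exhibit the locus in $\Spec B$ over which $(\D',\rho')$ lifts through $\iota_*$ as a closed subscheme. I would organize the proof through the $(A,B)$-display block formalism of \S\ref{blockpar}: fpqc-locally one produces a $G$-reduction, the rigidity of such a reduction is governed by Lemma~\ref{deform02}, and the passage from formal effectivity to honest effectivity on $B$ is handled in the two geometric settings needed by Lemma~\ref{deform01}. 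The global descent of the $G$-reduction is then the content of Proposition~\ref{analog}, whose proof requires precise control of the subring $A\subset B$ on which the reduction is defined, modulo the nilpotents of $B$; the material of the third appendix on nilradicals is what makes this bookkeeping possible. This Proposition~\ref{analog}, as anticipated in the introduction, is the main obstacle in the whole argument.

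Granting the closed immersion assertion, the restriction of $\RZ_{G,\mu,b}$ to locally Noetherian schemes in $\mathrm{Nilp}_W$ is representable by a closed formal subscheme of $\mathcal M$; this is automatically a formal scheme, and it is formally locally of finite type since $\mathcal M$ is. It remains only to verify formal smoothness. By \ref{deformRZ} the deformation theory of a pair $(\D,\rho)$ reduces to that of $\D$, and since $\D$ is adjoint nilpotent, Theorem~\ref{globaldef} together with Corollary~\ref{globtriv} identifies the set of lifts of $\D$ across a square zero ideal sheaf $\mathcal I$ on an affine base as a nonempty torsor under $\Gamma(S,\mathcal I\otimes_{\O_{S_0}}T_{\D_0})$. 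This yields formal smoothness and completes the reduction; the serious work is entirely concentrated in Proposition~\ref{analog} and Corollary~\ref{closedRZ}.
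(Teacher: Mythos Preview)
Your proposal is correct and follows essentially the same route as the paper: reduce to the $\GL$-case via the Hodge embedding, invoke Rapoport--Zink/Zink/Lau for representability of the target under the ``no slope $1$'' hypothesis, apply Corollary~\ref{closedRZ} (resting on Proposition~\ref{analog} and the display-block machinery) to get a closed immersion on Noetherian algebras, and conclude formal smoothness from \S\ref{deform04} via \ref{deformRZ}. The only minor imprecision is that Remark~\ref{stressful01} relates adjoint nilpotence to Zink nilpotence rather than directly deriving the latter from the slope condition, but the implication you need (no slope $1$ $\Rightarrow$ formal $p$-divisible group, hence Zink-nilpotent display) is immediate from the definitions and is exactly what the paper uses after Conjecture~\ref{conjmain}.
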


\subsubsection{}
Fix a  local unramified  Shimura-Hodge datum  $(G, b, \mu,  \Lambda)$. The following
is obtained similarly to \cite[Lemma 2.2.5]{HP2}. Note however that, here, we are using the covariant
Dieudonn\'e module.

\begin{lemma}\label{lemma121}(\cite{HP2})
There is a unique,  up to isomorphism,   $p$-divisible group 
\[
X_0=X_0(G, b, \mu,  \Lambda)
\] 
over $k$ whose  (covariant) Dieudonn\'e module   is  $\Db(X_0)(W)=\Lambda\otimes_{\Z_p}W$ 
with Frobenius $F=b \circ \sigma$.  Moreover,  the Hodge filtration 
\[
V D_k\subset  D_k=\Db(X_0)(k)
\] 
is   induced by   a conjugate of the reduction $\mu_k: \Gm_k\to G_k $ of $\mu$ modulo $(p)$.
\end{lemma}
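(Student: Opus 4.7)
The plan is to invoke the classical covariant Dieudonn\'e equivalence over the perfect field $k$. This identifies $p$-divisible groups over $k$ with Dieudonn\'e modules, i.e.\ pairs $(M, F)$ consisting of a free $W$-module $M$ of finite rank together with a $\sigma$-semilinear map $F: M \to M$ satisfying $pM \subseteq FM \subseteq M$; equivalently, the $\sigma^{-1}$-semilinear operator $V := p F^{-1}$ exists on $M$ and satisfies $FV = VF = p$. Uniqueness of $X_0$ up to isomorphism is then automatic, so the task reduces to verifying that the pair $(M, F) := (\Lambda \otimes_{\Z_p} W, \, b \sigma)$ is a Dieudonn\'e module, and then to computing $VD_k$ explicitly.

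For the inclusions, I would invoke assumption (ii) of \ref{general}, namely $b \in G(W)\mu^\sigma(p) G(W)$, to write $b = g_1 \mu^\sigma(p) g_2$ with $g_i \in G(W)$. Choosing a basis of $\Lambda_{\co_E}$ for which $\iota \circ \mu = \mu_{r,n}$, the matrix $\iota(\mu^\sigma(p))$ equals $\diag(1^{(r)}, p^{(n-r)})$, which manifestly carries $M$ into itself and has image containing $pM$. Since $g_1, g_2 \in \GL_n(W)$ preserve $M$, the same inclusions hold for $\iota(b)$, and hence for $F(M) = b \sigma(M) = bM$. This confirms that $(M, F)$ is a Dieudonn\'e module and produces $X_0$, unique up to isomorphism.

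To identify the Hodge filtration I compute $V(M) = p F^{-1}(M) = \sigma^{-1}(p b^{-1} M)$. Using $b^{-1} = g_2^{-1} \mu^\sigma(p)^{-1} g_1^{-1}$ together with $g_1^{-1} M = M$, this becomes $V(M) = \sigma^{-1}(g_2^{-1} L)$, where $L := (p \mu^\sigma(p)^{-1})(M)$ is the $W$-span of $p e_1, \ldots, p e_r, e_{r+1}, \ldots, e_n$. Reducing modulo $p$, the subspace $L/pM \subset D_k = \Lambda_k$ is precisely the standard Hodge subspace cut out by $\mu_k$ in the chosen coordinates, and so $VD_k = \sigma^{-1}(\bar g_2^{-1})(L/pM)$ is the filtration cut out by the $G(k)$-conjugate cocharacter $\Int(\sigma^{-1}(\bar g_2^{-1})) \circ \mu_k$, as required.

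There is no serious technical obstacle; the argument is essentially a covariant analogue of \cite[Lemma 2.2.5]{HP2}. The genuine care required lies in bookkeeping with conventions --- specifically the $\sigma^{-1}$-semilinearity of $V$ when passing from $V(M)$ to a $k$-subspace of $D_k$, and the appearance of $p\mu^\sigma(p)^{-1}$ rather than $\mu^\sigma(p)$ itself when identifying $VD_k$ with the parabolic attached to $\mu_k$.
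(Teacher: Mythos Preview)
The paper does not supply its own proof of this lemma; it simply records that the statement ``is obtained similarly to \cite[Lemma~2.2.5]{HP2}'', noting the switch to the covariant Dieudonn\'e module. Your proposal is correct and is exactly the covariant adaptation of that cited argument---as you yourself observe in your final paragraph---so there is nothing to compare.

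One small imprecision worth flagging: after fixing an $\co_E$-basis with $\iota\circ\mu=\mu_{r,n}$, the matrix $\iota(\mu^\sigma(p))$ need not literally equal $\diag(1^{(r)},p^{(n-r)})$ in that same basis, only be $\GL_n(W)$-conjugate to it. This does not affect the inclusions $pM\subset FM\subset M$ (which depend only on elementary divisors), and for the Hodge filtration it merely contributes an extra $G(k)$-conjugation, which is absorbed into the conclusion ``a conjugate of $\mu_k$''.
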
 

 In what follows, we will show  Theorem \ref{hodgemain}. We will use a natural morphism from ${\rm RZ}_{G,\mu, b}$ to the functor represented by the ``classical''
 Rapoport-Zink formal scheme $\RZ_{X_0}$, where $X_0=X_0(G, b, \mu,  \Lambda)$ is as above.

\subsection{The proof of the representability theorem} 

\subsubsection{}
In this subsection we show Theorem \ref{hodgemain}.
We are going to use the notion of Frobenius separatedness defined in Appendix C. We write ${\rm ANilp}_\O^{\rm noeth}$, ${\rm ANilp}_\O^{\rm ared}$, ${\rm ANilp}_{\O}^{\rm aFs}$ be the full 
subcategories of ${\rm ANilp}_\O$ consisting of $\O$-algebras which are resp. Noetherian, resp. with nilpotent nilradical, resp. which are almost Frobenius separated.

\subsubsection{} In what follows we fix a closed group scheme immersion  
\[
i:(G,\mu)\hookrightarrow(G',\mu')
\]
as in \ref{embeddingpair}. We will eventually apply the following statements to the case that $G'=\GL_h$, $\mu'=\mu_{d, h}$, and $i$ is given by a Hodge embedding datum. 

\begin{proposition} 
\label{GNN02}
Suppose 
that $B$ is in ${\rm ANilp}^{\rm aFs}_{W}$.  Let $\D_1$ and $\D_2$ 
be banal displays with $(G,\mu)$-structure over $B$ such that $i(\D_1)$ 
and $i(\D_2)$ are adjoint nilpotent. Then a pair $(\phi,\psi)$ with
\begin{itemize}
\item[(i)]
$\phi:\D_1\dashrightarrow\D_2$ a $G$-quasi-isogeny, 
\item[(ii)]
$\psi:i(\D_1)\rightarrow i(\D_2)$ a $G'$-isomorphism,
\end{itemize}
is induced from a $G$-isomorphism $\epsilon:\D_1\rightarrow\D_2$ 
if and only if 
\[
i(\phi)=\psi[1/p],
\]
i.e. if and only if $i(\phi)$ and $\psi$ give  the same 
$G'$-quasi-isogeny $i(\D_1)\dashrightarrow i(\D_2)$.
(By Proposition \ref{GNN01}, the isomorphism $\epsilon$ is then uniquely determined.)
\end{proposition}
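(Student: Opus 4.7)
The plan is to first dispose of uniqueness, then reduce the existence of $\epsilon$ to an algebraic lifting question about a $W(B)$-point of $G'$, and finally resolve that question by a Frobenius-iteration argument in the spirit of Proposition~\ref{GNN01}. Uniqueness is immediate: two such $\epsilon$'s would differ by an automorphism of $\D_2$ whose induced isodisplay morphism is the identity, and Proposition~\ref{GNN01} applied to $\D_2$ (which is adjoint nilpotent, as $i(\D_2)$ is) forces such an automorphism to be trivial.

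For existence I will trivialize the $H^\mu$-torsors underlying $\D_1,\D_2$, so that $\D_i$ is encoded by $U_i\in G(W(B))$, $\psi$ by $h'\in H^{\mu'}(B)\subset G'(W(B))$ satisfying $h'\cdot i(U_1)=i(U_2)\cdot\Phi_{G',\mu'}(h')$, and $\phi$ by $g\in G(W(B)[1/p])$, with the compatibility $i(\phi)=\psi[1/p]$ translating to $i(g)=h'$ in $G'(W(B)[1/p])$. The sought-after $\epsilon$ is then an element $h\in H^\mu(B)$ with $i(h)=h'$; since $P_\mu=G\cap P_{\mu'}$ and $h'_0\in P_{\mu'}(B)$, the inclusion $h\in H^\mu$ is automatic once $h\in G(W(B))$ is produced. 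So everything reduces to showing that the $W(B)$-point $h'$ of $G'$, known to restrict to a point of $G(W(B)[1/p])$, already factors through the closed immersion $i:G\hookrightarrow G'$.

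The a priori obstruction to this lift lies in $W(\fraka)$, where $\fraka:=\bigcup_n\ker(F^n_B)$, since $\ker(W(B)\to W(B)[1/p])=W(B)[p^\infty]=W(\fraka)$ as in the proof of Proposition~\ref{GNN01}. The unconditional algebraic statement ``$G'(W(B))\cap G(W(B)[1/p])\subset G(W(B))$'' is however false in general --- for example the element $(1+[t],1)$ of $({\mathbb G}_m\times{\mathbb G}_m)(W({\mathbb F}_p[t]/(t^p)))$ lies in the diagonal ${\mathbb G}_m$ only after inverting $p$ --- so the Frobenius intertwining equation for $h'$ must enter essentially; indeed one checks that no pair $U_1,U_2\in G(W(B))$ makes such a ``non-integral'' $h'$ satisfy $h'\cdot i(U_1)=i(U_2)\cdot\Phi_{G',\mu'}(h')$.

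To use the Frobenius structure systematically I will imitate the proof of Proposition~\ref{GNN01}: let $\mathfrak b\subset B$ be the smallest ideal such that the image of $h'$ in $G'(W(B/\mathfrak b))$ factors through $G(W(B/\mathfrak b))$. Rigidity of automorphisms (Corollary~\ref{deform06}) will force $\mathfrak b=\mathfrak b^2$, while iterating the intertwining $h'=i(U_2)\Phi_{G',\mu'}(h')i(U_1)^{-1}$ together with the adjoint nilpotence of $\D_2$ --- through the nilpotent endomorphism $\Ad^G(w_0(U_2))\circ F\circ\pi$ of Lemma~\ref{stressful03} --- will drive the defect of $h'$ into some $W(J_N)$, so that $\mathfrak b\subset J_N$ is bounded nilpotent. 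Lemma~\ref{FsBNI} then yields $\mathfrak b=0$, whence $h'\in G(W(B))$ and the required $\epsilon$ exists. The hard part is precisely this last step --- establishing a uniform bound $N$ on the Frobenius-torsion exponent of the defect by iterating the isomorphism equation --- in analogy with the delicate Frobenius-plus-nilpotence bookkeeping in Theorem~\ref{GMZCF} and Proposition~\ref{GNN01}.
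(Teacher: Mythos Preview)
Your overall strategy --- define a minimal ideal $\mathfrak b\subset B$ for which $h'$ already lands in $G$, show $\mathfrak b=\mathfrak b^2$ via rigidity-plus-deformation, show $\mathfrak b$ is bounded nilpotent, then invoke Lemma~\ref{FsBNI} --- is sound and is exactly the template of Proposition~\ref{GNN01}. The idempotence step $\mathfrak b=\mathfrak b^2$ works as you indicate: once $h'\bmod\mathfrak b$ lies in $G$, the induced $G$-isomorphism lifts to $B/\mathfrak b^2$ by the deformation argument of Lemma~\ref{deform02} (the injection $T_{\bar\D_2}\hookrightarrow T_{i(\bar\D_2)}$), and Corollary~\ref{deform06} forces this lift to coincide with $\psi\bmod\mathfrak b^2$.

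However, you misidentify the ``hard part'' and propose the wrong tool for it. Showing $\mathfrak b\subset J_N$ for some $N$ does \emph{not} require iterating the intertwining equation or invoking the nilpotent operator of Lemma~\ref{stressful03}; that iteration gains nothing, since $\Phi_{G',\mu'}$ already preserves $G$-valued points. The correct argument --- and this is precisely what Proposition~\ref{GNN01} does, as does the paper here --- is purely scheme-theoretic: because $i:G\hookrightarrow G'$ is a closed immersion of finite presentation, the ideal $I_0\subset W(B)$ cutting out the locus where $h'$ factors through $G$ is \emph{finitely generated}. The existence of $g\in G(W(B)[1/p])$ with $i(g)=h'$ forces $I_0\subset W(B)[p^\infty]=\bigcup_n W(J_n)$, and finite generation then gives $I_0\subset W(J_N)$ for a single $N$, hence $\mathfrak b\subset J_N$. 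Adjoint nilpotence plays no role in this step; it enters only through the rigidity and deformation lemmas you already cite.

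Two further omissions: you never reduce to $pB=0$. This matters because the quasi-isogeny $\phi$ and hence $g$ live only over $B/pB$, so the comparison $i(g)=h'$ in $G'(W(B)[1/p])$ is only available after this reduction; the paper handles it via Lemma~\ref{deform02}. Second, the paper's proof is organized differently from yours: rather than proving $\mathfrak b=0$ in one stroke, it first gets a $G$-isomorphism over $B/J_{n'}$ (via finite presentation), then extends to $B/J_{n'}^N$ for all $N$ (via deformation theory), uses the aFs hypothesis to see that $\bigcap_N J_{n'}^N$ is nilpotent, and finishes with one more deformation step. Your route, once repaired, is arguably more streamlined.
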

\begin{proof}
Using Lemma \ref{deform02} we can reduce to the case $pB=0$.  Again  set $J_n=\ker(F^n_{B})=\{x\in B |x^{p^n}=0\}$. 
Since the displays $\D_i$ are banal, they are represented by elements $U_i\in G(W(B))$. The quasi-isogeny 
$\phi$ gives us an element $k\in G(W(B)_\q)$ such that 
$U_2=k^{-1}U_1\Phi^{\mu}(k)$ holds in $G(W(B)_\q)$. The isomorphism 
$\psi$ gives us an element $l\in H^{\mu'}(B)$ such that 
$i(U_2)=l^{-1}i(U_1)\Phi^{\mu'}(l)$ holds in $G'(W(B))$. By our assumption,  the images of 
$i(k)$ and $l$ agree in the group $G'(W(B)_\q)$. 

Since $i$ is a 
closed immersion there exists a smallest ideal $I_0$ of $W(B)$ such that 
the restriction of the $W(B)$-valued point $l$ to $\Spec (W(B)/I_0)$ 
factors through, say $h\in G(W(B)/I_0)$. Since $i: G\to G'$ is of finite presentation, $I_0$ is a finitely 
generated ideal of $W(B)$.

The existence of $k$ shows that $I_0$ is contained in $W(B)[p^\infty]=\bigcup_nW(J_n)$. By the finite generation of $I_0$,
we can choose  a large enough $n$ such that 
$I_0\subset W(J_{n})$ and we can consider $h_{n}\in G(W(B)/W(J_{n}))=G(W(B/J_{n}))=L^+G(B/J_{n})$. Since $H^{\mu}=L^+G \cap H^{\mu'}$,  we see that $h_{n}$ 
is in  $H^{\mu}(B/J_{n})$. Let $U_{i, n}$ be the images of $U_i$  in the group $G(W(B/J_{n}))$. Note that the elements $U_{2, n}$ 
and $h_{n}^{-1}U_{1, n}\Phi^{\mu}(h_n)$ are well-defined in $G(W(B/J_{n}))$ and have the 
same image in $G'(W(B)_\q)$. Since $G'$ is of finite type, there exists another integer $n'\geq n$ such that these elements agree already in 
$G'(W(B/J_{n'}))$ and so also in $G(W(B/J_{n'}))$. Therefore, we have constructed an isomorphism
$$
\eta:\D_1\times_BB/J_{n'}\xrightarrow{\sim}\D_2\times_BB/J_{n'}
$$
with $\eta[1/p]=\phi$ and $i(\eta)\equiv\psi\mod J_{n'}$.  Now 
use deformation theory (as in the proof of Lemma \ref{deform02}) to extend $\eta$ to a compatible system of isomorphisms $\eta_N$ over $B/J_{n'}^N$, for all $N$, which are given by $h_{N}\in H^\mu(W(B/J_{n'}^N))$ and which, when viewed in $H^{\mu'}(W(B/J_{n'}^N))$, are all the reduction of a single element $h'\in H^{\mu'}(W(B))$. 
Set $I=\cap_{N} J_{n'}^N$; by our assumption that $B$ is almost Frobenius separated, $I$ is nilpotent. It follows that $h'\,{\rm mod}\, I$ is in $H^\mu(W(B/I))$ and so it gives a lift of $\eta$ to an isomorphism 
$\tilde\eta:\D_1\times_BB/I\xrightarrow{\sim}\D_2\times_BB/I
$ with $\tilde\eta[1/p]=\phi$ and $i(\tilde\eta)\equiv\psi\mod I$. 
We can now conclude by a similar deformation theory argument as before.
\end{proof}

\subsubsection{}\label{RZembedding} Now let $\D_0$ be a $(G,\mu)$-display defined over an algebraically closed field extension
$k$ of $k_0$. Suppose that the $(G',\mu')$-display $i(\D_0)$ is adjoint nilpotent, then the same holds true for
$\D_0$. We can consider the Rapoport-Zink stacks of groupoids ${\calRZ}_{G, \mu, \D_0}$, resp. ${\calRZ}_{G',\mu',i(\D_0)}$, of pairs of $(G,\mu)$-displays, resp. $(G',\mu')$-displays, together with a $G$-quasi-isogeny to $\D_0$, resp. a $G'$-quasi-isogeny to $i(\D_0)$. There is a natural transformation 
\[
i: {\calRZ}_{G,\mu, \D_0}\to {\calRZ}_{G',\mu', i(\D_0)}.
\]
(Note that by Lemma \ref{GNN01}, for $B$ in ${\rm ANilp}^{\rm aFs}_{W(k)}$, the objects
of ${\calRZ}_{G,\mu, \D_0}(B)$, ${\calRZ}_{G',\mu', i(\D_0)}(B)$, have no automorphisms.)

\begin{proposition}
\label{analog}
 Suppose that $B$ 
is an object of  ${\rm ANilp}_{W(k)}^{\rm ared}$ and let $A\subset B$ a Noetherian 
$W(k)$-subalgebra. Suppose we are given:
\begin{itemize}
\item $(\D,\delta)$ is an object of ${\calRZ}_{G,\mu,\D_0}(B)$, 
\item $(\D',\delta')$ is an object of ${\calRZ}_{G',\mu', i(\D_0)}(A)$,
\item $\psi$ an isomorphism $\psi: i((\D,\delta))\xrightarrow{} (\D',\delta')\times_A B$.
\end{itemize}
Then there is an object $(\D^\Diamond, \delta^\Diamond)$ of 
${\calRZ}_{G, \mu, \D_0}(A)$ together with isomorphisms
\[
(\D^\Diamond, \delta^\Diamond)\times_A B\xrightarrow{\sim} (\D, \delta),\quad 
i((\D^\Diamond, \delta^\Diamond))\xrightarrow{\sim} (\D',\delta'),
\]
which are compatible with $\psi$ in the appropriate manner.
\end{proposition}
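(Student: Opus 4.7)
The plan is to separate the problem into two parts: first, construct a $(G,\mu)$-display $\D^\Diamond$ over $A$ whose base change to $B$ is isomorphic to $\D$ and whose image under $i$ is isomorphic to $\D'$ (i.e., effectivity of the underlying $(A,B)$-display block $(\D',\D,\psi)$ in the sense of Definition \ref{block}); and second, transfer the quasi-isogeny $\delta$ to a quasi-isogeny $\delta^\Diamond$ over $\bar A$. For the second part, once $\D^\Diamond$ is in hand, the candidate $\delta^\Diamond$ is obtained from $\delta$ via the identification $\D^\Diamond \times_A B \cong \D$; the fact that it is already defined over $\bar A$, not merely $\bar B$, will follow from Proposition \ref{GNN02}, since $i$ of the candidate agrees with $\delta'\times_A B$ via $\psi$ and the rigidity of $G$-quasi-isogenies up to $i$ forces the descent. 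This step uses that $\bar A$ is almost Frobenius separated (which holds since $A$ is Noetherian, by the Appendix C results).

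The core of the argument is thus the effectivity of the display block. I would first reduce to the case where $A$ is a complete local Noetherian $W(k)$-algebra by applying fpqc descent, enabled by the affine diagonal of $\B(G,\mu)$ (Lemma \ref{diagonal}(b)), and passing to the completion $\hat A_\frakm$ at each maximal ideal of $A$. Over each such $\hat A_\frakm$, one wants to invoke Lemma \ref{deform01}; however, its hypotheses (i) or (ii) need not hold directly for $A \subset B$. The key maneuver is to exploit the nilpotent nilradical $\frakn_B$ of $B$: setting $\frakb = \frakn_B$ and $\fraka = A \cap \frakn_B$, one uses Lemma \ref{deform05} iteratively along the filtration by powers of $\frakn_B$ to reduce the problem to the situation in which $B$ is reduced. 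In this reduced case, the embedding $i: G \hookrightarrow G'$ and the affineness of $G'/G$ (\cite[6.12]{ColliotSansuc}) allow one to cut out the $L^+G$-torsor underlying $\D^\Diamond$ inside the $L^+G'$-torsor underlying $\D'$, just as in the last step of the proof of Lemma \ref{deform01}. The lifts back across the nilpotent nilradical are then controlled by the deformation theory of Theorem \ref{GMZCF} and \ref{affine deformation}, using that $i(\D)$ adjoint nilpotent implies $\D$ adjoint nilpotent.

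The main obstacle will be the mismatch between $A$ (Noetherian) and $B$ (only with nilpotent nilradical, neither Noetherian nor finite over $A$ in general); in particular, one must check that the $\frakn_B$-adic topology on $B$ induces an $A$-topology compatible with the $\frakm$-adic topology on $A$, so that Lemma \ref{deform02} can be applied to guarantee uniqueness of the local descents and their compatibility on overlaps. This is precisely the nilradical-handling subtlety flagged in the introduction, and it is what makes the simultaneous use of Appendix C (Frobenius separatedness of $\bar B$), Artin--Rees on the Noetherian ring $A$, and the rigidity statements of Proposition \ref{GNN01} unavoidable. Once these issues are settled, the local displays $\D^\Diamond_\frakm$ on the $\hat A_\frakm$ glue to a global $\D^\Diamond$ over $A$ by fpqc descent, and the required compatible isomorphisms follow from the uniqueness clauses in Proposition \ref{GNN01} and Lemma \ref{deform02}.
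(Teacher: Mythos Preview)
Your plan has a genuine gap in the effectivity part, and the reduction you sketch does not reach a case you can actually handle.

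The serious problem is that reducing to $B$ reduced does not, by itself, let you ``cut out'' an $L^+G$-torsor inside the $L^+G'$-torsor of $\D'$ over $A$. What you would need is a section over $\Spec(W(A))$ of the affine scheme $G'/G$ twisted by the $L^+G'$-torsor of $\D'$; you only have one over $\Spec(W(B))$. The argument at the end of Lemma \ref{deform01} that you invoke is a very special situation: there one already has a display over $A$ and only needs to check that an identification over $\hat B$ is already valid over $B\hookrightarrow\hat B$. That is an injectivity statement, not a descent. In your situation no candidate over $A$ exists yet, and the inclusion $A\subset B$ is typically \emph{not} faithfully flat (the basic example is $A=K[[t]]\subset B=K((t))$, where the closed point of $\Spec A$ is missed), so fpqc descent is unavailable. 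Nothing in your outline produces the descent across such a localization.

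The paper's proof addresses exactly this. It isolates the faithfully flat case (Step~1, where fpqc descent together with Proposition~\ref{GNN02} on $B\otimes_A B$ gives the descent datum) from the genuinely non-flat case $K[[t]]\subset K((t))$ (Step~3). The latter requires a new ingredient you do not mention: the Iwasawa decomposition $G(E)=P_\mu(E)\cdot G(R)$, which yields $G(W(E))=H^\mu(E)\cdot G(W(R))$ and lets one factor the comparison element $l\in H'^{\mu}(E)$ as $l=ab^{-1}$ with $a\in H'^{\mu}(R)$ and $b\in H^\mu(E)$, after first using $W(E)\cap W(R)[1/p]=W(R)$ and the affineness of $G'/G$. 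Only with this factorization can one write down $U^\Diamond=a^{-1}U'\Phi_{G',\mu'}(a)=b^{-1}U\Phi_{G,\mu}(b)\in G(W(R))$. The remaining steps of the paper (normalization, completion, induction on Krull dimension) serve to reduce the general Noetherian case to combinations of Steps~1 and~3. Your deformation-theoretic reduction along the nilradical is fine as far as it goes, but it never reaches a situation where the non-flat descent is solved; that is the missing idea.

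A smaller point: your proposed descent of the quasi-isogeny via Proposition~\ref{GNN02} is not quite the right tool. That proposition upgrades a $G'$-isomorphism to a $G$-isomorphism in the presence of a compatible $G$-quasi-isogeny; it does not descend a $G$-quasi-isogeny from $B$ to $A$. The paper instead argues directly: once $\D^\Diamond$ exists, the two quasi-isogenies $\tilde\delta$ (over $B$) and $\tilde\delta'$ (over $A$) give, in the banal case, elements of $G(W(B)[1/p])$ and $G'(W(A)[1/p])$ agreeing in $G'(W(B)[1/p])$; since $W(A)[1/p]\hookrightarrow W(B)[1/p]$ and $\O_{G'}\twoheadrightarrow\O_G$, the $G$-point already lies in $G(W(A)[1/p])$.
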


\begin{proof} Given the above, the data $(\D',\D,\psi)$ give an $(A,B)$-display block as in Definition \ref{block}.
Part of the desired conclusion is that $(\D',\D,\psi)$ is effective. In fact, we will first show that 
it is enough to show this effectivity: Indeed, assume we have that, i.e. suppose that we have constructed
a $(G,\mu)$-display $\D^\Diamond$ over $A$ together with isomorphisms $i(\D^\Diamond)\xrightarrow{\sim} \D'$,
$\D^\Diamond\times_A B\xrightarrow{\sim}  \D$, which are compatible with $\psi$. We can then give the $G$-quasi-isogeny $\delta^\Diamond$ over $\bar A$ as follows:  The data $\delta$, $\delta'$ together with the above isomorphisms give a $G'$-quasi-isogeny $\tilde\delta': i(\bar \D^\Diamond)\dashrightarrow i(\D_0)\times_k \bar A$ and a $G$-quasi-isogeny $\tilde\delta: \bar \D^\Diamond\times_{\bar A} {\bar B}\dashrightarrow \D_0\times_k {\bar B}$. These two are compatible in the sense that
\[
i(\tilde\delta)=\tilde\delta'\times_{\bar A}{\bar B}.
\]
We claim that this implies that $\tilde\delta$ is defined over $\bar A$ 
and so it gives the desired $\delta^\Diamond$. To see this we can assume
that the display $\D^\Diamond$ is banal (by using \'etale
descent on $A$ and Corollary \ref{torsCor}). Then $\tilde\delta$, $\tilde\delta'$ are given 
by elements in $g\in G(W(B)[1/p])$ and $g'\in G'(W(A)[1/p])$ respectively and the condition is that
these elements coincide in $G'(W(B)[1/p])$. Now observe that our conditions on $A\subset B$ imply
\[
W(A)[1/p]\subset W(B)[1/p].
\]
(Indeed, since $A$, $B$ are in ${\rm ANilp}_W^{\rm ared}$ there is an integer $n$ such that $p^n$ annihilates 
both $W(\rad(A))$ and $W(\rad(B))$. Hence, $W(A)[1/p]=W(A_{\rm red})[1/p]$, $W(B)[1/p]=W(B_{\rm red})[1/p]$.
Since $W(A_{\rm red})$, $W(B_{\rm red})$ are $p$-torsion free and $A\subset B$ induces $A_{\rm red}\subset B_{\rm red}$, the inclusion follows.) Now $g$ is given by $\O_{G}\to W(B)[1/p]$ and $g'$ by $\O_{G'}\to W(A)[1/p]$. The compatibility condition is that the following diagram commutes
\[
\begin{matrix} 
\O_{G'} & \xrightarrow{ } & \O_G\\
g'\downarrow\ \ && \ \ \downarrow g\\
W(A)[1/p]&\hookrightarrow & W(B)[1/p].
\end{matrix}
\]
 Since $\O_{G'}\to \O_G$ is surjective this implies that $g$ factors through $W(A)[1/p]$;
 this provides the $G$-quasi-isogeny $\delta^\Diamond$ .

To show $(\D',\D, \psi)$ is effective we now proceed in several steps, some of which are similar to 
\cite[proof of Proposition 5.4.(ii)]{E7}. As it turns out, the two most important cases
that we need to handle are when $B$ is faithfully flat over $A$ (Step 1),
and when $A=K[[t]]$ and $B=K((t))$, with $K$ a field (Step 3). 

Observe that the statement 
holds for a finite product $A=\prod_{i=1}^nA_i$ if and only if 
it holds for each factor $A_i$, so that it does no harm to assume 
the connectedness of $\Spec A$ in each of the steps below.\smallskip
 
{\sl Step 1.} $B$ is faithfully flat over $A$ (e.g. if $A$ is a field).
\smallskip

 By Lemma \ref{torsCor}, there is an \'etale faithfully flat $B\to B'$ such that $\D\times_BB'$ is banal.
Notice that $B'$ has also nilpotent radical and so we can replace $B$ by $B'$ and assume that $\D$
is banal. Then the conclusion will be a  consequence of fpqc descent for $A\to B$ and ${\calRZ}_{G,\mu, \D_0}$.
We can use Corollary \ref{GNN05} and Proposition \ref{GNN02} applied to $B\otimes_A B$ to construct the descent datum as follows:

Let $d_1: B\to B\otimes_AB$, $d_2: B\to B\otimes_AB$, be the coprojections and $d: A\to B\otimes_A B$ their 
common restriction to $A$. Consider 
\[
d_i^*(\D,\delta):=(\D,\delta)\times_{B,d_i}(B\times_A B),
\]
for $i=1,2$, in $\calRZ_{G,\mu, \D_0}(B\otimes_AB)$. Define a $G$-quasi-isogeny $\phi:  d_1^*\D\dashrightarrow d_2^*\D$
by setting
\[
\phi=d_2^*(\delta)^{-1}\cdot d_1^*(\delta): d_1^*\D  \dashrightarrow d^*\D_0\dashrightarrow d_2^*\D.
\]
There is also a $G'$-isomorphism $\psi':  d_1^*i(\D)\to d_2^*i(\D)$ given by the composition
\[
d_1^*i(\D)  \xrightarrow{d_1^*(\psi)} d_1^*(\D'\times_AB)=d^*\D' =d_2^*(\D'\times_A B) \xrightarrow{d_2^*(\psi)^{-1}} d_2^*i(\D).
\]
We can now see that $\psi'$ gives the $G'$-quasi-isogeny $i(\phi)$. By Corollary \ref{GNN05},
$B\otimes_A B$ is in ${\rm ANilp}_W^{\rm aFs}$ and so we can apply Proposition \ref{GNN02}
to construct the descent datum which is given by a $G$-isomorphism $\epsilon: d_1^*\D  \xrightarrow{\sim}  d_2^*\D$
over $B\otimes_A B$. The $G$-isomorphism $\epsilon$ is compatible with the  $G$-quasi-isogenies $d_1^*\delta$ and $d_2^*\delta$.\smallskip

{\sl Step 2.}  $A$ is a complete local Noetherian ring and $B$ is finite over $A$.
\smallskip

First let us make the following observation: 
Let $A\to A'$ be an \'etale extension of local rings with a finite separable extension of residue fields $k'$
over the residue field $k=A/\frakm$ (then $A'$ is also complete Noetherian). Assume we have the result for $A'\hookrightarrow B':=B\otimes_AA'$, and
for the base changes of our displays to $A'$
($B'$ is then also in ${\rm ANilp}_W$). Then we can apply descent (or Step 1) to the faithfully flat $A\to A'$ to 
further descent to $A$ and obtain the result for $A\to B$. 

Now apply Step 1 to the inclusion $A/\frakm\hookrightarrow B/J(B)$. This shows that the display block
$(\D'\times_AA/\frakm,\D\times_BB/J(B),\psi\times_BB/J(B))$ is effective and is given by a $(G,\mu)$-display over $k=A/\frakm$. There is a finite separable extension $k'$ of $k$ such that this display is banal
and we can find an \'etale extension $A'$ as above with $k'$ as residue field.
Now apply part (ii) of Lemma \ref{deform01} to the base changes by $A'$ to descent to $A'$ and conclude by using the observation above.\smallskip

{\sl Step 3.} $A=K[[t]]$ and $B=K((t))$, where $K$ is a field extension of $k$.
\smallskip

By using Step 1 we may assume that $K$ is algebraically closed. By base changing to 
a finite separable extension $L$ of $K((t))$ we can arrange that the $(G,\mu)$-display $\D$
is banal. Notice that if $A$ is the integral closure of $K[[t]]$ in $L$, then $A=K[[u]]$
for some variable $u$ and $L=K((u))$. Since $A=K[[u]]$ is finite over $K[[t]]$, an application of 
Step 2 shows that we can always descend along $K[[t]]\hookrightarrow K[[u]]$; hence, 
we can reduce to the situation $A=K[[t]]\hookrightarrow B=K((t))$ with $K$ algebraically closed
and all displays banal. For simplicity of notation, set $R=K[[t]]$, $E=K((t))$.
Hence we can find   $U'\in G'(W(R))$ 
for $\D'$, and $U\in G(W(E))$ for $\D$.
The isomorphism  $\psi: i(\D)\xrightarrow{}\D'\times_RE$ 
is given by $l\in H^{\mu'}(E)\subset G'(W(E))$ with $U=l^{-1}U'\Phi_{G,\mu}(l)$. Using the two isogenies $\delta$, $\delta'$
we obtain a factorization 
\[
l=ab^{-1}
\]
in $G'(W(E)[1/p])$. In this, $b\in G(W(E)[1/p])$ gives  $\delta: \D\dashrightarrow \D_0\times_kK((t))$ and 
$a\in G'(W(R)[1/p])$ gives $\delta': \D'\dashrightarrow i(\D_0)\times_kR$. Consider the  quotient $G'/G$ which is represented by an affine scheme $Z$ (\cite[6.12]{ColliotSansuc}). 
The above identity implies that the $W(E)$-valued point  of $Z$ which is given by $l$ is equal to the $W(R)[1/p]$-valued point 
which is given by $a$ with the equality of points considered in $Z(W(E)[1/p])$.
Now use
\[
W(E)\cap W(R)[1/p]=W(R).
\]
(The intersection takes place in $W(E)[1/p]$.) Using that $Z$ is affine, we see that there is a $W(R)$-valued point $z$
of $Z$ which gives both $l$ and $a$. Since $G'\to Z=G'/G$ is a $G$-torsor and $W(R)=W(K[[t]])$
is henselian, there is $c\in G'(W(R))$ that lifts $z$. Then $c=a\cdot d$ with $d\in G(W(R))$. Thus we can adjust
$b$ by multiplying by $d^{-1}$ and assume now that
$
l=ab^{-1}
$
with $a\in G'(W(R))$ and $b\in G(W(E))$. Now use that, by the Iwasawa decomposition, we have
\[
G(E)=P_\mu(E)\cdot G(R).
\]
This and the surjectivity of $G(W(E))\to G(E)$, $G(W(R))\to G(R)$ 
(which holds by Hensel's lemma, since $W(E)$, resp. $W(R)$, is $I(E)$-adically, resp. $I(R)$-adically, complete
and $G$ is smooth)
gives
\[
G(W(E))=H^\mu(E)\cdot G(W(R)).
\]
Write $b=h\cdot g$ with $h\in H^\mu(E)$, $g\in G(W(R))$. Then 
$l=ab^{-1} =ag^{-1}\cdot h^{-1}$. Hence, we may write $l=ab^{-1}$
with $a\in G'(W(R))$ and $b\in H^\mu(E)$. Observe that then $a\in H^\mu(R)$.
Now set
\[
U^\Diamond=a^{-1}U'\Phi_{G', \mu'}(a)=b^{-1}U\Phi_{G, \mu}(b).
\]
This element is  in  both $G'(W(R))$ and $G(W(E))$, hence it belongs to 
the intersection $G(W(R))$. We can now see that it defines the desired display
$\D^\Diamond$; the elements $a$ and $b$ give the isomorphisms of 
$i(\D^\Diamond)$ to $\D$ and of $\D^\Diamond\times_RE$ to $\D$
respectively.\smallskip

In what follows, we denote by $Q(R)$ the total quotient ring of $R$, \emph{i.e.} the localization $N^{-1}R$ of $R$ at the set of non-zero divisors $N\subset R$.
\smallskip

{\sl Step. 4.} 
$A$ is a reduced complete local one-dimensional Noetherian ring.
\smallskip

Consider $A\subset Q(A)\subset B\otimes_A Q(A)$; we can apply Step 1
to the base change $Q(A)\subset B\otimes_A Q(A)$. This allows us to 
reduce to considering $A\subset Q(A)$, i.e. we can assume $B=Q(A)$.
Consider the normalization $A'$ of $A$ in $Q(A)$. By \cite[Th\'eor\`eme (23.1.5)]{egaiv} this is a finite 
extension of $A$. By the Cohen structure theorem we have $A'\simeq K[[t]]$, $Q(A)\simeq K((t))$.
The result follows by applying sucessively Step 3 and Step 2.
\smallskip

The sequence of the following four steps settles the case of Noetherian 
rings of finite Krull dimension, we argue by induction on $\dim(A)$:
\smallskip

{\sl Step 5.}  $A$ is an integrally closed complete local Noetherian ring.
\smallskip

By Serre's condition $S_2$, there exists a regular sequence of length two, i.e. elements $f$ and $g$ such that $f$ is  neither a unit nor a zero-divisor 
of $A$ and $g$ is neither a unit nor a zero-divisor of $A/fA$. Notice that 
$A[1/g]$ is a Noetherian ring of dimension strictly less than $\dim A$, 
so by induction we can apply Step 8 (for a ring of smaller dimension) to the inclusion 
$A[1/g]\subset B[1/g]$ and we obtain the desired display $\D^\Diamond$ over the ring $A[1/g]$. Thus 
we may replace $B$ by $A[1/g]$, and so do consider the inclusion $A\subset A[1/g]$, which 
satisfies the assumption of part (i) of Lemma \ref{deform01}. (Here, to make sure that
the $(G,\mu)$-display over the residue field $A/\frakm$ is banal, we might need to base change by a finite \'etale local extension
$A\to A'$ as in Step 2.)
\smallskip

{\sl Step 6.} 
$A$ is a complete local Noetherian ring.
\smallskip

Using Lemma \ref{deform05} we see that we can assume that $A$ is reduced.
Then by an argument as in Step 4, we can reduce to the case $B=Q(A)$.
Just as in 
the Step 4 we consider the normalization $A'$ of 
$A$ in $Q(A)$. Using the previous step we can replace $B$ by the ring $A'$. However, by  \cite[Th\'eor\`eme (23.1.5)]{egaiv}, $A'$ is a finite extension of $A$, so that we can conclude by applying Step 2.
\smallskip

{\sl Step 7.}
$A$ is a local Noetherian ring.
\smallskip

Just as in the step above we can assume $A$ is reduced, putting us into a position where $B$ may be replaced by $Q(A)$. 
Apply Step 6 to the ring extension $\hat A\subset Q(A)\otimes_A\hat A$, where $\hat A$ denote the completion of the 
local ring $A$. This allows us to reduce to the case
$A\to B=\hat A$. Since $A\to \hat A$ is faithfully flat we can conclude by applying Step 1.
\smallskip

{\sl Step 8.}
$A$ is a Noetherian ring of finite Krull dimension.
\smallskip

As before we can assume $A_{\rm red}=A$ and $B=Q(A)$. By base change, for every maximal ideal 
$\frakm$ of $A$ we obtain a $(A_\frakm,Q(A_\frakm))$-display block 
\[
(\D'\times_AA_\frakm,\D\times_{Q(A)}Q(A_\frakm),\psi\times_{Q(A)}Q(A_\frakm)),
\]
where $A_\frakm$ stands for the localization of $A$ at $\frakm$. 

Set $A^\sharp:=\prod_\frakm A_\frakm$, which is reduced ($\frakm$ runs through the set of maximal ideals). Observe that the fact that $A_\frakm$ are all reduced 
implies that the   union $\cup_{\frak m} \Spec(A_\frakm)\hookrightarrow \Spec(A^\sharp)$ of the 
closed immersions $\Spec(A_\frakm)\subset \Spec(A^\sharp)$ is dense in $\Spec( A^\sharp)$. 
(In fact, in general, if the radical of $\prod_{i} A_i$ is nilpotent, then $\cup_i\Spec(A_i)$ is dense in $ \Spec(\prod_i A_i)$.)

Now let us write  $(\D^\Diamond_\frakm, \delta^\Diamond_\frakm)$ 
for its descent to ${\calRZ}_{G,\mu, \D_0}(A_\frakm)$, of which the existence is granted by the previous step.
Now let us construct a product display $\D^\sharp:=(P^\sharp, Q^\sharp, u^\sharp)=\prod_\frakm\D_\frakm^\Diamond$ over the 
ring $A^\sharp$:

  i) We obtain the $L^+G$-torsor $P^\sharp$ by applying Remark \ref{torsor04}. To obtain the descent 
$Q^\sharp$ to an $H^\mu$-torsor use the observation \ref{torsordatum}: We need a section over $\Spec(A^\sharp)$ of the 
corresponding $X_\mu$-bundle $ P^\sharp/H^\mu$ for $X_\mu=G/P_\mu$. By assumption, we have such a section
over $\Spec(A_\frakm)$, for all $\frakm$, while the $X_{\mu'}$-bundle $P'/H^{\mu'}$ for $X_{\mu'}=G'/P_{\mu'}$ has a section over $\Spec(A)$ and therefore over $\Spec(A^\sharp)$. These  agree as sections of $P'/H^{\mu'}$ 
over $\Spec(A_\frak m)$, for all $\frakm$. Now notice that, by descent, $P^\sharp/H^\mu\subset P'^\sharp/H^{\mu'}$ is a closed immersion and hence, by the above density of $\cup_{\frak m} \Spec(A_\frakm)\hookrightarrow \Spec(A^\sharp)$,
we see that these give a section of $P^\sharp/H^\mu$ over $A^\sharp$. 

ii) The construction of $u^\sharp$
from $(u_\frakm)$ is obtained by an argument as in the proof of essential surjectivity in Lemma \ref{torsor03}.

  In addition, we need to construct a ``compatible'' $G$-quasi-isogeny $\delta^\sharp: \D^\sharp\dashrightarrow\D_0\times_k A^\sharp/(p)$:
Let us first assume that $\D'$, $\D^\Diamond_\frakm$ are banal and fix trivializations of the torsors
$P'$, $P^\Diamond_\frakm$. Then $(\delta^\Diamond_\frakm)$ is given by 
\[
(g_\frakm)\in \prod\nolimits_\frakm G(W(A_{\frakm})[1/p])=G(\prod\nolimits_\frakm (W(A_\frakm)[1/p]));
\]
this also 
lies in $G'(W(A^\sharp )[1/p])$ and therefore in $G(W(A^\sharp)[1/p])$. The non-banal case is treated in a similar way
by working with, instead of   points of affine group schemes, points of the affine schemes of 
suitable torsor isomorphisms.

Notice that the morphism $A\rightarrow A^\sharp$ is faithfully flat. 
By construction, $\D'\times_AA^\sharp\cong i(\D^\sharp)$, and so $(\D^\sharp, \delta^\sharp)$, $(\D',\delta')$,
provide data to which we can apply the special case of Step 1.
\smallskip

{\sl Step 9.} $A$ is an arbitrary Noetherian ring.
\smallskip

Since every local Noetherian ring has finite Krull dimension, we can apply the previous step one more time, because meanwhile 
we know the local result without a restriction on the dimension.
\end{proof}
 
\begin{corollary}\label{closedRZ}
Fix a Noetherian algebra $A$ in ${\rm ANilp}_W$. Let $(\D', \delta')$ be an object of the stack $\calRZ_{G', \mu', i(\D_0)}$
over $A$.  Then there exists an ideal $I\subset A$ such that the following statement is true:
For each $A$-algebra $f: A\to B$ in ${\rm ANilp}_W^{\rm ared}$, we have $f(I)=0$ if and only if 
there exists an object $(\D, \delta)$ of $\calRZ_{G,\mu, \D_0}$ over $B$ 
together with an isomorphism $\phi: (\D', \delta')\times_A B\xrightarrow{\sim} i((\D, \delta))$.
\end{corollary}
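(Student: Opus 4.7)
The plan is to produce the ideal $I$ as the minimum element of the set
\[
T := \{J \subset A : (\D',\delta')\times_A A/J \text{ admits a lift to } \calRZ_{G,\mu,\D_0}(A/J)\},
\]
where by a ``lift'' over $A/J$ I mean an object $(\D,\delta)\in \calRZ_{G,\mu,\D_0}(A/J)$ together with an isomorphism $i((\D,\delta))\xrightarrow{\sim}(\D',\delta')\times_A A/J$. By Propositions \ref{GNN01} and \ref{GNN02}, applied in the category ${\rm ANilp}_W^{\rm ared}\subset {\rm ANilp}_W^{\rm aFs}$, any such lift is unique up to a unique isomorphism whenever it exists; so $T$ is well-defined, and the assignment sending an $A$-algebra $B$ in ${\rm ANilp}_W^{\rm ared}$ to the set of (isomorphism classes of) such lifts is a subfunctor of $\Hom(A,-)$.

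First I will show $T$ is stable under finite intersections. Given $J_1,J_2 \in T$ with corresponding lifts $(\D_{J_i},\delta_{J_i})$, the ring $B := A/J_1 \times A/J_2$ lies in ${\rm ANilp}_W^{\rm ared}$ (a finite product of Noetherian quotients of $A$, so $p$ remains nilpotent and the nilradical is nilpotent with uniform index), and $A/(J_1\cap J_2)\hookrightarrow B$ is a Noetherian subring. The product lift $(\D_{J_1}\times\D_{J_2},\,\delta_{J_1}\times\delta_{J_2})$ over $B$, together with the tautological isomorphism to $(\D',\delta')\times_A B$, descends to $A/(J_1\cap J_2)$ by Proposition \ref{analog}, placing $J_1\cap J_2$ in $T$.

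The crucial step is to show $T$ has a minimum $I$. The plan is to apply Zorn's lemma in the inclusion order: combined with finite intersection closure, any minimal element of $T$ is automatically a minimum. For a descending chain $J_1\supset J_2\supset\cdots$ in $T$, I construct a lower bound as follows. By uniqueness of lifts the $(\D_n,\delta_n)$ form a compatible system under the surjections $A/J_n \twoheadrightarrow A/J_m$ for $m\le n$. Setting $\hat A := \varprojlim_n A/J_n$, which contains $A/\bigcap_n J_n$ as a subring, I will glue this system into a lift over $\hat A$ via Proposition \ref{limitdisplay}, and then apply Proposition \ref{analog} to the injection $A/\bigcap_n J_n\hookrightarrow\hat A$ to descend, placing $\bigcap_n J_n$ in $T$. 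Zorn's lemma then furnishes the minimum $I$.

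The universal property of $I$ follows directly from Proposition \ref{analog}: given $f:A\to B$ in ${\rm ANilp}_W^{\rm ared}$ with a lift over $B$, the factorization $A\twoheadrightarrow A/\ker f\hookrightarrow B$ has $A/\ker f$ Noetherian, so Proposition \ref{analog} descends the lift to $A/\ker f$, giving $\ker f\in T$ and hence $I\subset\ker f$, i.e., $f(I)=0$; conversely, $f(I)=0$ means $f$ factors through $A/I$ and the lift there pulls back to $B$. The principal obstacle is securing the hypotheses of Proposition \ref{limitdisplay} for $\hat A$, namely that it be Noetherian and adically complete with respect to an ideal containing a power of $p$. This is not automatic for an arbitrary descending chain; I expect to circumvent this by first reducing to the case of local Noetherian $A$ along the lines of Step 8 in the proof of Proposition \ref{analog}, using the faithfully flat map $A\to\prod_\frakm \hat A_\frakm$ and the affineness of $G'/G$, and then exploiting the fact that in the local case any descending chain admits a cofinal subchain of powers of the maximal ideal, to which Proposition \ref{limitdisplay} directly applies.
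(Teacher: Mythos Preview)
Your overall strategy matches the paper's: define the set $T$ of ideals over whose quotients $(\D',\delta')$ acquires a $G$-structure, show $T$ is closed under finite intersections via Proposition~\ref{analog}, and then argue that $T$ has a minimum. The divergence is in how you establish the minimum, and here your argument has a genuine gap.

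Your Zorn's lemma step requires gluing a compatible system $(\D_n,\delta_n)$ over a descending chain $(J_n)$ into a display over $\hat A=\varprojlim_n A/J_n$, and then applying Proposition~\ref{analog} to $A/\bigcap J_n\hookrightarrow\hat A$. You correctly flag that Proposition~\ref{limitdisplay} does not apply directly, but your proposed repair is not right: even for complete local Noetherian $A$, an arbitrary descending chain need not be cofinal with powers of $\frakm$. What is true (Chevalley's theorem) is that if $J=\bigcap J_n$ then every $J+\frakm^k$ contains some $J_n$, so by upward closure of $T$ the chain $(J+\frakm^k)_k$ lies in $T$ and has the same intersection; this would let you replace the chain by one to which Proposition~\ref{limitdisplay} applies. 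But you have not carried this out, and more seriously, your reduction to the (complete) local case is only sketched: knowing the corollary for each $\hat A_\frakm$ does not obviously yield it for $A$, since you would need to compare the minimum ideals across the localizations and descend. Finally, even granting the gluing, you must check $\hat A\in{\rm ANilp}_W^{\rm ared}$ to invoke Proposition~\ref{analog}; the nilpotence index of $\rad(A/J_n)$ need not be uniform in $n$, so this is not automatic.

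The paper avoids all of this by dispensing with Zorn's lemma entirely. It sets $\mathfrak I=\bigcap_{J\in T}J$ and applies Proposition~\ref{analog} directly to the injection $A/\rad(\mathfrak I)\hookrightarrow B^\flat:=\prod_{J\in T}A/\rad(J)$. The point is that $B^\flat$ is a product of \emph{reduced} rings, hence trivially in ${\rm ANilp}_W^{\rm ared}$, and a ``product display'' over $B^\flat$ can be assembled exactly as in Step~8 of Proposition~\ref{analog} (using the density of $\bigcup\Spec(A/\rad(J))$ in $\Spec(B^\flat)$ and the affineness of $G'/G$). This gives $\rad(\mathfrak I)\in T$, and a second application of the same argument to $A/\mathfrak I\hookrightarrow\prod_{\mathfrak I\subset J\subset\rad(\mathfrak I)}A/J$ (whose target again has nilpotent nilradical) yields $\mathfrak I\in T$. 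The use of products of reduced quotients in place of inverse limits is what makes the nilradical hypothesis of Proposition~\ref{analog} automatic and sidesteps the difficulties you identified.
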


This implies that the morphism $\calRZ_{G,\mu, \D_0}\to \calRZ_{G',\mu', i(\D_0)}$ of stacks, when restricted over ${\rm ANilp}_W^{\rm noeth}$, is represented by a closed immersion.

\begin{proof}
This is similar to \cite[proof of Theorem 5.5]{E7}: Let us say $(\D', \delta')$ has 
$G$-structure over $f: A\to B$ if there is an isomorphism $\phi: (\D', \delta')\times_A B\xrightarrow{\sim} (\D, \delta)$.
Then Proposition \ref{analog} implies that $(\D',\delta')$ has a $G$-structure over the quotient $A/{\rm ker}(f)$.
Denote by $S{(\D',\delta')}$ the set of ideals of $A$ for which $(\D',\delta')$ has a $G$-structure 
over $A/I$. If $I$ and $J$ are in $S{(\D',\delta')}$ we can apply Proposition \ref{analog} to $A/(I\cap J)\hookrightarrow A/I\times A/J$ and deduce that $I\cap J$ is also  in $S{(\D',\delta')}$.
In general, finite intersections of ideals in $S{(\D',\delta')}$ are also in $S{(\D',\delta')}$.
Now consider the ideal
\[
{\mathfrak I}:={\mathfrak I}_{(\D',\delta')}=\cap_{I\in S{(\D',\delta')}} I.
\]
Consider the reduced product  $B^\flat=\prod_{I\in S{(\D',\delta')}} A/\rad(I)$ of the reduced rings $A/\rad(I)$.
We can construct a product $(G,\mu)$-display $\D$ and a $G$-quasi-isogeny of $\D$ to $\D_0$ over this product
ring $B^\flat$ by an argument as in the proof of Step 8 above. Now apply Proposition \ref{analog}
to 
\[
A/\rad(\mathfrak I)\hookrightarrow B^\flat=\prod_{I\in S{(\D',\delta')}} A/\rad(I).
\]
We obtain that $\rad(\mathfrak J)$ also belongs to $S(\D',\delta')$. 
Now repeat the argument and apply Proposition \ref{analog} to
\[
A/\mathfrak I\hookrightarrow \prod_{I\in S(\D',\delta'),\ \mathfrak I\subset I\subset \rad(\mathfrak I)} A/I.
\]
(Notice that the radical of this product is also a nilpotent ideal.) This implies that $\mathfrak I\in S(\D',\delta')$
which is enough to deduce the result.
\end{proof}
 
Apply the above to the case that $G'=\GL_h$, $\mu'=\mu_{d, h}$, and $i$ is given by a Hodge embedding datum. 
By the results of Rapoport-Zink, Zink, and Lau, the functor $\RZ_{\GL_h, \mu_{d, h}, i(b)}$ is representable by a $W$-formal scheme which is locally formally of finite type over $W$ (see \ref{defRZ}; notice that the base-point $p$-divisible group
is $X_0$, given in Lemma \ref{lemma121}).
By Corollary \ref{closedRZ}, we obtain that the restriction of $\RZ_{G,\mu, b}$
to ${\rm ANilp}^{\rm noeth}_W$ is represented by a $W$-formal  closed subscheme of 
$\RZ_{\GL_h, \mu_{d, h}, i(b)}$ which is then also formally locally  of finite type over $W$.
Formal smoothness over $W$ follows from our deformation theory results. This concludes the proof of Theorem 
\ref{hodgemain}.\qed
  
\begin{remark} We now easily see that, in the Hodge type case of Theorem \ref{hodgemain}, the $W$-formal scheme representing 
the restriction of
$\RZ_{G,\mu, b}$ above to locally Noetherian schemes is isomorphic to formal schemes
 constructed in \cite{KImRZ} and 
 \cite{HP2} (when these are defined, for example, when the local Hodge embedding is globally realizable, see loc. cit.)
Indeed, all of these are $W$-formal closed subschemes
of the classical Rapoport-Zink $W$-formal scheme $\RZ_{\GL_h, \mu_{d, h}, i(b)}$
 with the same $k$-valued points (given by the affine Deligne-Lusztig set, Proposition  \ref{ADL}), and the same formal completions
 at these points (by deformation theory,
see \ref{deformRZ} and \ref{deform04})
so they agree by flat descent. Let us note, however, a slight difference in notation:
The Rapoport-Zink formal scheme for $(G, \mu, b)$ in this paper agrees with the one 
for $(G, \delta\cdot\mu^{-1}, pb^{-1})$ in \cite{HP2}, with $\delta: \Gm\to G\subset \GL(\Lambda)$ the central diagonal torus. (The existence of $\delta$ is part of the assumption of being of Hodge type in loc. cit.). The reason for this discrepancy is that \cite{HP2} uses
the contravariant Dieudonn\'e functor for the construction 
of the base point
$p$-divisible group $X_0$ corresponding to $i(\D_0)$.
\end{remark}

\begin{remark} In the Hodge type case of Theorem \ref{hodgemain}, we can use the 
descent datum $\alpha$ given in Remark \ref{remarkDescent} to descend $\RZ_{G,\mu, b}$
over $\O_E$ as in \cite[3.49, 3.51]{RapZinkBook}. This is done by an argument as in loc. cit., see for example loc. cit. Lemma 3.50.
\end{remark}

\begin{appendix}

\setcounter{subsubsection}{0}

\section{Minuscule cocharacters and parabolics}\label{simplerversion}

Here we collect some notations and standard results on (minuscule) cocharacters and 
corresponding parabolic and unipotent subgroups of reductive group schemes.
We refer the reader to \cite{SGA3} or \cite{ConradNotesSGA} for more details.

\subsubsection{}\label{dynamicparabolic}
We consider a perfect field $k_0$ and a  smooth affine group scheme $G$ over 
$W(k_0)$ with connected fibers. Let $\mu:\g_{m,W(k_0)}\rightarrow G$ be a group scheme homomorphism
(a ``cocharacter'' of $G$). We consider the closed subgroup schemes $U_\mu$ and $P_\mu$ 
of $G$ which are defined by the following subfunctors of $G$ on $W(k_0)$-algebras
\[
P_\mu(R)=\{g\in G(R)\ | \ \lim_{t\mapsto 0}\mu(t)g\mu(t)^{-1}\, {\rm exists} \}
\]
\[
U_\mu(R)=\{g\in G(R)\ | \ \lim_{t\mapsto 0}\mu(t)g\mu(t)^{-1}=1\}
\]
(See \cite[2.1]{ConradGabberPrasad}, or \cite[Theorem 4.1.17]{ConradNotesSGA}.)

Here   ``$\lim_{t\to 0}$ exists'', by definition, implies that the conjugation action 
$\g_{W(k_0)}\times_{W(k_0)} P_\mu\to P_\mu$ extends to a morphism 
\[
\Int_\mu: \a_{W(k_0)}^1\times_{W(k_0)}P_\mu
\rightarrow P_\mu.
\] This gives an action 
 of the monoid scheme $\a_{W(k_0)}^1$ 
on  $P_\mu$ by group scheme endomorphisms.
Under this the zero section of $\a_{W(k_0)}^1$ maps $U_\mu$ to the neutral 
section.

\subsubsection{}\label{parabolicfacts}
 By \cite[Theorem 4.1.17]{ConradNotesSGA} we have:    

\begin{itemize}
\item
Under our assumption, $U_\mu$ and $P_\mu$ are smooth group schemes over $W(k_0)$
with connected fibers.  

\item The subgroup scheme $U_\mu$ is unipotent
and the multiplication
\[
m: P_{\mu}\times_{W(k_0)} U_{\mu^{-1}} \to G
\]
is an open immersion. We will denote by $G^*_\mu$ or $G^*$, if $\mu$ is clear from the context,
the open subscheme of $G$ given as the image of this morphism. 

\item  If $
\frakg=\Lie(G)=\oplus_{n\in \Z}\frakg_n$
is the weight space decomposition of the Lie algebra under the adjoint action
(i.e. $\frakg_n=\{v\in \frakg \ |\ \mu(t)v\mu(t)^{-1}=t^nv\}$), then we have
\[
\frakp_{\mu}=\Lie(P_{\mu})=\oplus_{n\geq 0} \frakg_n,
\]
\[
 \fraku_{\mu^{-1}}=\Lie(U_{\mu^{-1}})=\oplus_{n<0}\frakg_n.
\]
\end{itemize}

\subsubsection{}\label{splitcase}
Suppose that $G$ is connected split reductive over $W(k_0)$ and $T\subset G$ is a split maximal torus such that $\mu$ factors through $T$. Denote by $\Phi$ the roots of $G$ and for $a\in \Phi$ by $\frakg_a\subset \frakg$, resp. $U_a$, the root $W(k_0)$-subspace of $\frakg$, resp. root subgroup scheme  of $G$. Then $\frakg_n=\oplus_{a | \langle\mu, a\rangle=n}\frakg_a$. Denote by $\Phi(\mu)\subset \Phi$ the set of roots $a$ such that $\langle\mu, a\rangle>0$.
Then the multiplication (with the factors in the product taken in any order)
\[
\prod_{a\in \Phi(\mu)} U_a\to U_\mu
\]
gives an isomorphism of $W(k_0)$-schemes (\cite[5]{ConradNotesSGA}). 
The subgroup scheme $P_\mu\subset G$ is a parabolic subgroup. The group scheme $U_\mu$
 is the unipotent radical of $P_\mu$.  It contains
a finite filtration
\[
U_\mu=U_{\Phi(\mu)\geq 1}\supseteq U_{\Phi(\mu)\geq 2}\supseteq \cdots
\]
of normal subgroup schemes such that the multiplication map
\[
\prod_{a | \langle\mu, a\rangle= n} U_a\to U_{\Phi(\mu)\geq n}/U_{\Phi(\mu)\geq n+1}
\]
(with the factors in the product taken in any order) is an isomorphism of {\sl group} schemes
(\cite[Prop. 5.1.16]{ConradNotesSGA}).

Recall that $\mu$ defines a 
decreasing filtration ${\rm Fil}^\bullet(V)$ on each representation $G\to\GL(V)$.
We can view $P_\mu\subset G$ as the subgroup scheme 
that respects the   filtration ${\rm Fil}^\bullet(V)$. 
Then $U_\mu\subset P_\mu$ is the subgroup 
scheme of $P_\mu$ that acts trivially on the graded ${\rm gr}^\bullet(V)$.

\subsubsection{} \label{minusculecase}
We assume that $G$ is a connected reductive group
scheme over $W(k_0)$ with $k_0$ a finite field.  
Then there is a finite field extension $k_0'/k_0$ such that $G\times_{W(k_0)}W(k_0')$
is split.  We now assume that the cocharacter $\mu$ is minuscule, i.e. that $\langle \mu, a\rangle\in \{-1,0,1\}$ for all absolute roots $a\in \Phi$. 

\begin{lemma}\label{minusculeunipotent}
The unipotent group scheme $U_\mu$ is commmutative and is isomorphic to
${\mathbb G}_a^r\times_{\Z_p}W(k_0)$, where $r\geq 0$ and ${\mathbb G}_a=\Spec(\Z_p[T])$ is the additive group scheme over $\Z_p$.
\end{lemma}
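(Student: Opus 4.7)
The plan is to first verify the result over a finite unramified extension $W(k_0')$ of $W(k_0)$ for which $G_{W(k_0')}$ splits, and then descend to $W(k_0)$ using the $\Gm$-action $\Int_\mu$ on $U_\mu$, which is already defined over $W(k_0)$; this equivariance is what will rigidify the descent.

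First, in the split case over $W(k_0')$, the facts recalled in \ref{splitcase} give a filtration of $U_\mu$ by the subgroups $U_{\Phi(\mu)\geq n}$ whose successive quotients are the products $\prod_{\langle\mu,a\rangle = n} U_a$. Since $\mu$ is minuscule, $\langle\mu, a\rangle \in \{-1,0,1\}$ for every root $a$, so $\Phi(\mu) = \{a : \langle\mu,a\rangle = 1\}$ and $U_{\Phi(\mu)\geq 2}$ is trivial. Consequently the multiplication map
\[
\prod\nolimits_{a \in \Phi(\mu)} U_a \iso U_\mu \otimes_{W(k_0)} W(k_0')
\]
is already an isomorphism of group schemes; as each $U_a \cong \mathbb{G}_a$, this yields $U_\mu \otimes W(k_0') \cong \mathbb{G}_a^r$ with $r = \#\Phi(\mu)$. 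In particular $U_\mu$ is commutative, commutativity being detectable after the faithfully flat extension to $W(k_0')$.

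Next, to descend this identification to $W(k_0)$, I would exploit the grading on $\mathcal{O}(U_\mu)$ induced by $\Int_\mu$. In the split picture just obtained, each coordinate $x_a$ on $U_a$ transforms with weight $\langle\mu,a\rangle = 1$, so $\mathcal{O}(U_\mu) \otimes W(k_0')$ is a polynomial ring on generators concentrated in degree $1$. Setting $V := \mathcal{O}(U_\mu)_1$, fpqc descent along $W(k_0) \to W(k_0')$ makes $V$ a finitely generated projective $W(k_0)$-module of rank $r$, hence free since $W(k_0)$ is local, while the natural map $\mathrm{Sym}_{W(k_0)}(V) \to \mathcal{O}(U_\mu)$ is an isomorphism, being so after the faithfully flat base change. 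This produces a $\Gm$-equivariant isomorphism of schemes $U_\mu \cong \Spec(\mathrm{Sym}(V)) \cong \mathbb{A}^r_{W(k_0)}$.

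Finally, the comultiplication $m^*\colon \mathcal{O}(U_\mu) \to \mathcal{O}(U_\mu) \otimes \mathcal{O}(U_\mu)$ is $\Gm$-equivariant for the diagonal action and hence preserves the grading: for $v \in V$, its image lies in the degree-one subspace $V\otimes 1 \oplus 1\otimes V$, so $m^*(v) = v_1 \otimes 1 + 1\otimes v_2$ with $v_1,v_2 \in V$. The counit axioms, combined with the vanishing of the counit on $V$ (positive-weight functions vanish at the $\Gm$-fixed identity), force $v_1 = v_2 = v$. Thus $m^*$ is the standard coproduct of $\mathrm{Sym}(V)$, the group law is addition, and any $W(k_0)$-basis of $V$ realizes $U_\mu \cong \mathbb{G}_a^r \times_{\Z_p} W(k_0)$. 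The main conceptual obstacle is the descent from $W(k_0')$ to $W(k_0)$: an abstract isomorphism of group schemes over $W(k_0')$ need not come from one over $W(k_0)$. What saves the argument is the single-weight grading coming from minusculeness, which effectively reduces the descent to Hilbert 90 for $\GL_r$, equivalently to the freeness of $V$ used above.
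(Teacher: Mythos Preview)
Your proof is correct and follows essentially the same strategy as the paper's: establish the result over a splitting extension $W(k_0')$ using that minusculeness kills $U_{\Phi(\mu)\ge 2}$, then descend. Where the paper invokes \cite[XVII.4.1.5]{SGA3} (the equivalence between vector bundles and vectorial groups, which already encodes the $\mathbb{A}^1$-monoid action noted in \ref{dynamicparabolic}) together with freeness of projective $W(k_0)$-modules, you unpack that black box by hand via the $\Gm$-grading on $\mathcal{O}(U_\mu)$. The argument that $\mathrm{Sym}(V)\to\mathcal{O}(U_\mu)$ is an isomorphism and that the coproduct is forced to be additive by the counit axiom is exactly what underlies the SGA3 reference, so the two proofs differ only in explicitness, not in substance.
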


\begin{proof}
First we see that $U_\mu\times_{W(k_0)}W(k_0')\simeq {\mathbb G}_a^r\times_{\Z_p}W(k_0')$: This follows from \cite[Prop. 5.1.16]{ConradNotesSGA} and \'etale descent (see also loc. cit. Theorem 5.4.3).
Indeed, since there are no absolute roots $a$ with $\langle \mu, a\rangle\geq 2$, the subgroup scheme $U_{\Phi(\mu)\geq 2}$ of loc. cit. is trivial. We can now conclude using \cite[XVII.4.1.5]{SGA3} and the fact that all projective finitely generated $W(k_0)$-modules
are free.
\end{proof}

\section{Loop group torsors}

Suppose that $R$ is a 
$W(k_0)$-algebra. As usual, $G$ is a connected reductive group scheme 
over $\Z_p$. We will compare between  $L^+G$-torsors over $R$
and $G$-torsors over $W(R)$. 
 Here, we view $W(R)$ as a $W(k_0)$-algebra
via $W(k_0)\to W(W(k_0))\to W(R)$.  The torsors are, by definition, locally trivial for the fpqc topology. Our convention is that the group acts on the right.

\begin{lemma}
\label{torsor01} If $P$ is a $G$-torsor 
over $W(R)$, then the Greenberg transform $FP$ is a $L^+G$-torsor over $R$. 
\end{lemma}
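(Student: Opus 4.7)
The plan is to verify the three torsor axioms for $FP$. First, representability: since $G$ is affine, $P$ is an affine $W(R)$-scheme of finite presentation, so by the Greenberg/Kreidl results cited before Proposition~\ref{greenbergprop}, the functor $R'\mapsto P(W(R'))$ is represented by an affine $R$-scheme, namely $FP$. The group scheme $L^+G$ acts on $FP$ functorially via $FP(R')\times L^+G(R')=P(W(R'))\times G(W(R'))\to P(W(R'))$ coming from the right $G$-action on $P$. Simple transitivity is inherited from the torsor structure of $P$: applying the Greenberg transform to the isomorphism $P\times_{W(R)}G\xrightarrow{\sim}P\times_{W(R)}P$, $(p,g)\mapsto(p,p\cdot g)$, and using compatibility with products (Proposition~\ref{greenbergprop}(a)), yields $FP\times_R L^+G\xrightarrow{\sim}FP\times_R FP$.

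For fpqc-local triviality, it suffices to find, after some faithfully flat cover $R\to R'$, a section in $FP(R')=P(W(R'))$. The base change $\bar P:=P\otimes_{W(R),w_0}R$ is a smooth $G$-torsor over $R$ (smoothness inherited from $G$), hence trivializable after some faithfully flat \'etale $R\to R'$; this gives a section $s_1\in\bar P(R')=P(R')$, where $R'$ is viewed as a $W(R)$-algebra through $W(R)\twoheadrightarrow R\to R'$. I then lift $s_1$ to a section over $W(R')$ using the tower $W(R')=\varprojlim_n W_n(R')$. Iterating the standard identity $V(x)V(y)=pV(xy)$ yields $V^n(x)V^n(y)=p^nV^n(xy)$, so the kernel of $W_{n+1}(R')\to W_n(R')$, which is isomorphic to $R'$ via $V^n(z)\mapsto w_0(z)$, inherits multiplication $(a,b)\mapsto p^n ab$, and is therefore a nilpotent ideal of $W_{n+1}(R')$ in the relevant setting where $p$ is nilpotent in $R'$, e.g. in $\ANilp_{W(k_0)}$. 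Formal smoothness of $P/W(R)$ (from smoothness of $G$) then allows inductive lifting of $s_n\in P(W_n(R'))$ to $s_{n+1}\in P(W_{n+1}(R'))$. Since $P$ is affine, $P(W(R'))=\varprojlim_n P(W_n(R'))$, and the compatible family $(s_n)$ produces the required $s\in FP(R')$.

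The principal obstacle is this inductive lifting. Once the nilpotence of the truncation kernels is established via the multiplication identity above and the hypothesis that $p$ is nilpotent in $R'$, the lifting is a standard consequence of formal smoothness, and the passage to the limit uses only that $P$ is affine so that $\Hom$ commutes with $\varprojlim$ in the second variable.
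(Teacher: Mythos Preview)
Your treatment of representability, the $L^+G$-action, and simple transitivity via Proposition~\ref{greenbergprop}(a) is exactly the paper's argument. The divergence is in local triviality, and there your proof has a genuine gap relative to the lemma as stated.

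You trivialize by lifting a section of $\bar P=P\otimes_{W(R),w_0}R$ through the tower $W_n(R')$, and for this you need the kernel of $W_{n+1}(R')\to W_n(R')$ to be nilpotent. As you correctly compute, the multiplication on that kernel is $(a,b)\mapsto p^n ab$, so nilpotence forces $p$ to be nilpotent in $R'$. You invoke this explicitly (``in the relevant setting where $p$ is nilpotent in $R'$''), but Lemma~\ref{torsor01} is stated for an \emph{arbitrary} $W(k_0)$-algebra $R$, with no such hypothesis. So your argument proves the lemma only for $R\in\ANilp_{W(k_0)}$.

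The paper sidesteps this by taking $FP$ itself as the fpqc cover: since $P$ is smooth affine over $W(R)$ by descent, Proposition~\ref{greenbergprop}(d) gives that $FP$ is flat over $R$; once one checks surjectivity, $FP\to\Spec(R)$ is faithfully flat, and $FP$ tautologically acquires a section over itself. No Witt-tower lifting, and hence no nilpotence of $p$, is needed. Your lifting argument is, however, precisely what the paper uses later in the proof of Proposition~\ref{torsor02}(a), where the hypothesis $R\in\ANilp_{W(k_0)}$ \emph{is} assumed; so you have in effect proved the stronger \'etale-local triviality statement of that proposition, just not the more general fpqc statement of the present lemma.
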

\begin{proof}
Observe that $P$ is affine of finite presentation and smooth by descent,  
and so by Proposition \ref{greenbergprop} the Greenberg transform $FP$ is affine flat and formally smooth over $R$. We can also easily see that $FP\to \Spec(R)$ is surjective, hence faithfully flat.
By Proposition \ref{greenbergprop} the action morphism $P\times_{W(R)}(G\times_{W(k_0)}W(R))\to P$ gives
an action
\[
FP\times_R L^+G \to FP.
\] Since $P$ is a $G$-torsor 
the morphism  $P\times_{W(R)}(G\times_{W(k_0)}W(R))
\xrightarrow{\sim} P\times_{W(R)} P$ given by $(x, g)\mapsto (x, x\cdot g)$
is an isomorphism. By Proposition  \ref{greenbergprop} again, the morphism  $FP\times_R L^+G\simeq FP\times_R FP$ given by the above action is an isomorphism and the result follows since $FP\to \Spec(R)$ is fpqc.
\end{proof}

\begin{proposition}
\label{torsor02}
Let $R$ be  in ${\rm ANilp}_{W(k_0)}$.

a) If $P$ is a $G$-torsor over $W(R)$, then there is an \'etale faithfully flat 
ring homomorphism $R\to R'$ such that $P\times_{W(R)}W(R')$ is trivial (i.e. has a section). 

b) The functor $P\mapsto FP$
 provides an equivalence from the category of 
$G$-torsors over $W(R)$ to the category of $L^+G$-torsors over $R$.
\end{proposition}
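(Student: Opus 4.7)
The plan is to tackle (a) first, using a pointwise Henselian argument, and then derive (b) via Isom-scheme considerations together with fpqc descent along the cover $W(R)\to W(R')$ induced by an \'etale $R\to R'$.

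For (a), fix $x\in\Spec R$ and let $k^s$ denote a separable closure of the residue field $k(x)$. Because $G$ is reductive with connected fibers and $k^s$ is separably closed of characteristic $p$, we have $H^1_{\et}(k^s,G)=0$; smoothness of $P$ over $W(R)$ lifts this to $H^1_{\et}(W(k^s),G)=0$, giving a trivialization of $P$ over $W(k^s)$. To spread this out I would use the strict Henselization $R^{\mathrm{sh}}_x$ (residue field $k^s$) and the tower of Greenberg truncations $F_nP$: by Proposition~\ref{greenbergprop}(c) each $F_nP$ is smooth of finite presentation over $R$, and the transition $F_{n+1}P\to F_nP$ is formally smooth because the kernel of $W_{n+1}(R)\to W_n(R)$ is nilpotent when $p$ is nilpotent in $R$ (from $Va\cdot Vb=pV(ab)$ iteratively), and in fact square-zero once $n$ exceeds the nilpotence index of $p$. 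Lifting the $k^s$-point of $F_1P$ step-by-step through the tower, using Hensel's lemma for $R^{\mathrm{sh}}_x$ at the smooth stages and the triviality of vector-bundle torsors at the square-zero stages, produces a section of $FP=\varprojlim_n F_nP$ over $R^{\mathrm{sh}}_x$, i.e.\ a trivialization of $P$ over $W(R^{\mathrm{sh}}_x)$. Since each $F_nP$ is of finite presentation, $R^{\mathrm{sh}}_x$ is a filtered colimit of \'etale $R$-algebras, and the tower eventually becomes a tower of affine-space bundles, the compatible lifts descend to an \'etale neighborhood of $x$; the disjoint union over $x\in\Spec R$ yields the required faithfully flat \'etale cover.

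For (b), full faithfulness follows formally: given $G$-torsors $P_1,P_2$ over $W(R)$, the functor $\underline{\mathrm{Isom}}_G(P_1,P_2)$ is represented by an affine $W(R)$-scheme of finite presentation (an inner twist of $G$), and combining the defining property $FX(R')=X(W(R'))$ with compatibility under products from Proposition~\ref{greenbergprop}(a) yields a natural isomorphism of $R$-schemes
\[
F\underline{\mathrm{Isom}}_G(P_1,P_2)\;\cong\; \underline{\mathrm{Isom}}_{L^+G}(FP_1,FP_2);
\]
taking global sections gives $\Hom_G(P_1,P_2)\cong \Hom_{L^+G}(FP_1,FP_2)$. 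For essential surjectivity, given an $L^+G$-torsor $Q$ on $R$, the argument of (a) applied to $Q$ (affine, faithfully flat, and formally smooth over $R$ by Proposition~\ref{greenbergprop}(d)) yields an \'etale faithfully flat $R\to R'$ trivializing $Q$, together with a cocycle $\phi\in L^+G(R'\otimes_R R')=G(W(R'\otimes_R R'))$. Standard \'etale base-change properties of the Witt functor (in the $p$-nilpotent setting, obtained by reduction mod $p$ and lifting) give an identification $W(R')\otimes_{W(R)}W(R')\cong W(R'\otimes_R R')$ and faithful flatness of $W(R)\to W(R')$, so $\phi$ yields a descent cocycle for a $G$-torsor $P$ over $W(R)$ along $W(R)\to W(R')$, with $FP\cong Q$ by construction.

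The main obstacle will be the descent step at the end of (a): reconciling the pro-object $FP=\varprojlim_n F_nP$ with the need to produce a section over a genuine \'etale neighborhood of $x$ rather than only over $R^{\mathrm{sh}}_x$. The resolution is to exploit the square-zero structure of the lifting problems $F_{n+1}P\to F_nP$ for $n$ beyond the nilpotence index of $p$, which turns what looks like an infinite descent into a finite-step problem in each \'etale chart.
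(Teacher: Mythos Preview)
Your approach to (a) is correct but substantially more elaborate than necessary. The paper's argument is a two-line affair: pull back $P$ along $w_0:W(R)\to R$ to get a $G$-torsor $P_0$ over $R$; since $G$ is smooth, $P_0$ trivializes over some \'etale faithfully flat $R\to R'$; then, because $p$ is nilpotent in $R'$, the ring $W(R')$ is $I(R')$-adically complete and separated, so Hensel's lemma lifts the section of $P_0\times_R R'$ directly to a section of $P\times_{W(R)}W(R')$. There is no need to pass through points, strict henselizations, or the tower $F_nP$. The ``main obstacle'' you flag at the end of your proposal simply does not arise in the paper's route.

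For (b), your full-faithfulness argument via Isom-schemes is fine and equivalent to the paper's. But your essential surjectivity argument has a genuine gap. You assert that $W(R')\otimes_{W(R)}W(R')\cong W(R'\otimes_R R')$ and that $W(R)\to W(R')$ is faithfully flat, calling these ``standard \'etale base-change properties of the Witt functor.'' They are not standard. For truncated Witt vectors $W_n$ these follow from van der Kallen's theorem that $W_n$ preserves \'etale morphisms, but for the full $W$ one must pass to an inverse limit, and neither flatness nor the tensor-product identification survives inverse limits without further argument. Note also that the natural map runs $W(R')\otimes_{W(R)}W(R')\to W(R'\otimes_R R')$, so your cocycle $\phi\in G(W(R'\otimes_R R'))$ does not automatically give a descent datum on the source side. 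The paper deliberately avoids this issue: it first invokes Zink's Witt descent (a direct result for projective $W(R)$-modules, not fpqc descent along $W(R)\to W(R')$) to handle $G=\GL_h$, and then for general $G$ embeds $G\hookrightarrow\GL_h$, uses affineness of $\GL_h/G$, and reads off a $W(R)$-point of $P/G$ from the $L^+G$-reduction of $i(Q)$. Your route could likely be salvaged by descending level-by-level along the \'etale covers $W_n(R)\to W_n(R')$ and then taking a limit, but as written the key step is unjustified.
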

\begin{proof}
Let us first show part (a): Since $G$ is smooth, the base change $P_0=P\times_{W(R), w_0} R$ of $P$ by $w_0: W(R)\to R$ splits locally for the \'etale topology on $R$, i.e. there is
$R\to R'$ \'etale faithfully flat such that $P_0\times_RR'$ has a section $s_0'$. 
Now since $p$ is nilpotent in $R'$, the Witt ring $W(R')$ is separated and complete
for the topology defined by the powers of $I(R')={\rm ker}(W(R')\to R')$ (see \cite[Prop. 3]{Zinkdisplay}).
By Hensel's lemma, the section $s'_0$ lifts to a section $s'$ of $P\times_{W(R)}W(R')$.

Now let us prove part (b). We will first show that the functor is fully faithful. Let $\phi, \psi: P\to P'$ be (iso)morphisms of $G$-torsors
over $W(R)$ such that $F(\phi)=F(\psi)$. By part (a),  there is an \'etale faithfully flat $R\hookrightarrow R'$ such that $P\times_{W(R)} W(R')$, $P'\times_{W(R)}W(R')$, are both trivial $G$-torsors. Hence, $\phi$ and $\psi$ are given by multiplication by $g_\phi$, $g_{\psi}\in G(W(R'))$. Our assumption $F(\phi)=F(\psi)$ now quickly implies that $g_{\phi}=g_{\psi}$ and so $\phi=\psi$. It now remains to show the essential surjectivity of $P\mapsto FP$.
First notice that in the case $G=\GL_h$ this is provided by Zink's Witt descent (\cite[Prop. 33, Cor. 34]{Zinkdisplay}).
Indeed, a (fpqc locally trivial) $L^+\GL_h$-torsor over $R$ gives by definition ``Witt descent data'' on $W(R')^h$
with respect to the faithfully flat $R\to R'$; by  loc. cit.  these determine a projective finitely generated $W(R)$-module;
this is locally free of rank $h$ on $W(R)$ and its scheme of linear automorphisms  produce the desired $\GL_h$-torsor over $W(R)$. Let us now handle the case of a general reductive group $G$. There is a closed group scheme immersion
$i: G\hookrightarrow \GL_h$  and the fpqc quotient $\GL_h/G=\Spec(A)$ is represented by an affine scheme
(\cite[Prop. 6.11, Cor. 6.12]{ColliotSansuc}).
Suppose now that $Q$ is a $L^+G$-torsor over $R$ and consider the induced $L^+\GL_h$-torsor
\[
i(Q)=Q\times_{L^+G, i} L^+\GL_h.
\]
By the above discussion, there is a $\GL_h$-torsor $P$ over $W(R)$ such that $FP\cong i(Q)$;
this gives a closed immersion $j: Q\hookrightarrow FP$ of schemes over $R$ which is $L^+G$-equivariant. By descent, the quotient $P/G$ is represented by an affine 
$W(R)$-scheme $Z$ and $P\to Z=P/G$ is a $G$-torsor. Then by a similar argument as in the proof 
of Lemma \ref{torsor01}, $FP\to FZ$ is a $L^+G$-torsor. Now
by applying fpqc descent (i.e.  ``taking the quotient of $j$''
by the action of $L^+G$)  we obtain a morphism
\[
j/L^+G: \Spec(R)  \to  FZ
\]
which amounts to a $W(R)$-valued point of $Z$. The pull back of the $G$-torsor $P\to Z$ along this point gives the desired $G$-torsor over $W(R)$.\end{proof}

 \begin{corollary}\label{torsCor}
 Suppose  $S$ is a scheme in ${\rm Nilp}_{W(k_0)}$. Then all (fpqc locally trivial) $L^+G$-torsors over $S$ split locally for the \'etale topology of $S$.\qed
 \end{corollary}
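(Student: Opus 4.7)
The plan is to reduce to the affine case and invoke Proposition \ref{torsor02} directly. Since the property of splitting \'etale-locally is local on $S$ for the Zariski (in fact \'etale) topology, I may cover $S$ by affine open subschemes $\Spec(R)$, on each of which $p$ is nilpotent, so that $R$ belongs to ${\rm ANilp}_{W(k_0)}$. It then suffices to prove the corresponding statement for an arbitrary (fpqc locally trivial) $L^+G$-torsor $Q$ over such an affine $\Spec(R)$.

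By Proposition \ref{torsor02}(b), the Greenberg transform functor $P \mapsto FP$ gives an equivalence from the category of $G$-torsors over $W(R)$ to the category of $L^+G$-torsors over $R$. Hence I may write $Q \cong FP$ for some $G$-torsor $P$ over $W(R)$. By Proposition \ref{torsor02}(a) there exists an \'etale faithfully flat homomorphism $R \to R'$ such that $P \times_{W(R)} W(R')$ is a trivial $G$-torsor, i.e.\ admits a section.

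Applying the functor $F$ (which is compatible with base change via the natural isomorphism $FP \times_R R' \cong F(P \times_{W(R)} W(R'))$ following from $W(R') = W(R) \otimes$-style compatibility implicit in the definition of $F$), this section produces a section of $Q \times_R R' \cong FP \times_R R'$ as an $L^+G$-torsor over $R'$. Thus $Q$ trivializes over the \'etale faithfully flat cover $\Spec(R') \to \Spec(R)$, as required.

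The proof is essentially a formal consequence of Proposition \ref{torsor02}; the only thing to verify beyond ``quoting'' is the compatibility of the Greenberg transform $F$ with the base change $R \to R'$, but this is immediate from the defining property $FP(R'') = P(W(R''))$ for any $R$-algebra $R''$. There is no real obstacle here — the content has been placed in Proposition \ref{torsor02}, and the corollary is the natural packaging of part (a) through the equivalence of part (b).
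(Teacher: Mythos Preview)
Your proof is correct and is exactly the argument the paper intends: the corollary is stated with an immediate \qed right after Proposition \ref{torsor02}, and your derivation via parts (a) and (b) of that proposition is the natural unpacking. One minor remark: the phrase ``$W(R') = W(R)\otimes$-style compatibility'' is misleading, since the Witt vector functor does not commute with tensor products; but you then give the correct justification via the defining property $FP(R'') = P(W(R''))$, which is all that is needed.
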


\begin{lemma}
\label{torsor03}
a) Let $A_n$, $n\geq 1$, be a sequence of local $W(k_0)$-algebras. Then the 
functor from the category of $G$-torsors over $A=\prod_nA_n$ 
to the product of the categories of $G$-torsors over 
$A_n$ is an equivalence of categories.

b) Let $A$ be a $W(k_0)$-algebra with a descending chain of ideals $\fraka_n$, $n\geq 1$, with $\fraka_i\fraka_j\subset \fraka_{i+j}$ and $\fraka_1/\fraka_n$ nilpotent in $A_n:=A/\fraka_n$, for all $n$, and
such that $A\cong\varprojlim_n A_n$.
Then the
functor from the category of $G$-torsors over 
$A$ to the category of compatible systems of $G$-torsors over $A_n$  is an equivalence of categories.
\end{lemma}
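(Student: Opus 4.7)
My plan is to handle each part by separating full faithfulness from essential surjectivity. For full faithfulness in both parts, observe that if $P, Q$ are $G$-torsors over a ring $R$, then the sheaf $\underline{\mathrm{Isom}}_G(P,Q)$ of $G$-equivariant isomorphisms is represented by an affine $R$-scheme of finite presentation (a fpqc twist of the affine finitely presented $\Z_p$-group $G$), so its $R$-points are ring homomorphisms from a fixed finitely presented $\Z_p$-algebra $C_0$ into $R$. Since formation of the Isom scheme commutes with base change, the universal property of products gives $\mathrm{Hom}(C_0, \prod_n A_n) = \prod_n \mathrm{Hom}(C_0, A_n)$ in (a), and the universal property of inverse limits gives $\mathrm{Hom}(C_0, \varprojlim_n A_n) = \varprojlim_n \mathrm{Hom}(C_0, A_n)$ in (b). This yields bijectivity of morphism sets and hence full faithfulness.

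For essential surjectivity in (a), since each $A_n$ is local and $G$ is smooth affine, each $G$-torsor $P_n$ trivializes over a faithfully flat étale extension $A_n \hookrightarrow A_n'$ (which can be taken finite étale over the local $A_n$), and is encoded by a descent cocycle $g_n \in G(A_n' \otimes_{A_n} A_n')$. Setting $A' := \prod_n A_n'$, which is naturally an $A$-algebra, the idempotent decomposition inherited from $A = \prod_n A_n$ (with $A_n = e_n A$ as a direct factor) allows the cocycle data $(g_n)_n$ to be viewed as a descent cocycle for the cover $A \to A'$. Then fpqc descent for affine $A$-schemes with a $G$-action yields a $G$-torsor $P$ over $A$ with $P \times_A A_n \cong P_n$.

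For essential surjectivity in (b), the hypothesis that $\fraka_1/\fraka_n$ is nilpotent in $A_n$ makes each projection $A_n \to A_1$ a nilpotent thickening, so by topological invariance of the étale site an étale $A_1$-algebra lifts uniquely and compatibly to étale $A_n$-algebras, and in the limit to an étale $A = \varprojlim_n A_n$-algebra. I will choose a faithfully flat étale $A_1 \to A_1'$ that trivializes $P_1$ (which exists by smoothness of $G$), lift it compatibly to étale $A_n \to A_n'$, and use Hensel's lemma along the nilpotent thickenings to trivialize each $P_n$ over $A_n'$. Then $A' := \varprojlim_n A_n'$ is faithfully flat étale over $A$, and the compatible trivializations of $P_n \times_{A_n} A_n'$ assemble into a trivialization over $A'$ of a prospective $G$-torsor; fpqc descent along $A \to A'$ produces the desired $G$-torsor $P$ over $A$.

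The main obstacle lies in controlling the product (resp. inverse limit) of étale covers and verifying faithful flatness over $A$. For (a), infinite products of faithfully flat morphisms need not be faithfully flat in general, but the special local structure of each $A_n$ (together with the choice of $A_n'$ as finite étale) and the idempotent decomposition of $A = \prod_n A_n$ will be what secures faithful flatness of $A \to \prod_n A_n'$. For (b), the analogous point is that $\varprojlim_n A_n'$ is faithfully flat étale over $A$, which hinges on the fact that each step $A_{n+1} \to A_n$ is a nilpotent thickening (since $\fraka_n/\fraka_{n+1}$ is a subquotient of the nilpotent $\fraka_1/\fraka_{n+1}$), so that the étale property propagates through the tower and to the limit. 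Once these technical verifications are in place, the remainder of the argument reduces to standard fpqc descent for affine schemes equipped with a $G$-action.
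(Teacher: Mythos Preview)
Your full faithfulness argument is essentially the same as the paper's: both use that the isomorphism sheaf between two $G$-torsors is represented by an affine scheme, so its points commute with products and inverse limits.

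For essential surjectivity, however, you take a different route from the paper, and your approach to part (a) has a genuine gap. You assert that each $G$-torsor $P_n$ over the local ring $A_n$ can be trivialized by a \emph{finite} \'etale extension $A_n\to A_n'$. This is not justified: smoothness of $G$ only gives trivialization over some \'etale neighborhood, and without assuming $A_n$ henselian there is no reason the trivializing cover can be chosen finite. (Over the henselization $A_n^h$ one can indeed find a finite \'etale trivializing extension, but $A_n^h$ is typically an infinite direct limit of \'etale local $A_n$-algebras, none of which is finite over $A_n$.) Once the $A_n'$ are not finite free of bounded rank, your acknowledged obstacle---showing $\prod_n A_n'$ is faithfully flat over $\prod_n A_n$---becomes a real problem rather than a routine check, and you give no argument for it.

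The paper sidesteps both difficulties by a different mechanism: choose a closed embedding $i:G\hookrightarrow\GL_h$. Then each induced $\GL_h$-torsor $Q_n\times_G\GL_h$ over the local ring $A_n$ is \emph{trivial} (projective modules over local rings are free), so there is no need to pass to any cover at all. A $\GL_h$-torsor $P$ over $A$ restricting to these is then immediate (trivial in case (a); from lifting projective modules in case (b)). Finally, the affine quotient $Z=P/G$ is used: the $G$-structures on the $Q_n$ give compatible points $z_n\in Z(A_n)$, and since $Z$ is affine one has $Z(A)=\prod_n Z(A_n)$ (resp.\ $=\varprojlim_n Z(A_n)$), producing a section $z\in Z(A)$ whose pullback of $P\to Z$ is the desired $G$-torsor. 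This reduction-to-$\GL_h$ argument is both shorter and avoids any delicate analysis of infinite products or limits of \'etale covers. Your approach to (b) is more nearly complete (nilpotent thickenings do allow lifting \'etale covers, and the limit cover can be shown finite \'etale over $A$), but for (a) you would need either to supply the missing finite-\'etale trivialization statement or to switch to the paper's strategy.
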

\begin{proof}
In both cases (a) and (b), the full faithfulness is clear and follows from the fact that the scheme of isomorphisms between two $G$-torsors over $A$ is 
represented by an affine $A$-scheme. Also, when $G=\GL_h$, both functors are essentially surjective
(for (b) this follows from lifting of projective modules, see for example \cite[p. 146-148]{Zinkdisplay}).
 In general, pick a closed group scheme embedding $i:G\hookrightarrow\GL_h$.
 Consider a (compatible, for part (b)) sequence  of $G$-torsors $Q_n$ over $A_n$ and the corresponding 
 $\GL_h$-torsors $Q_n\times_{G}\GL_h$.
 (In case (a), since $A_n$ is local, 
 the $\GL_h$-torsor $Q_n\times_{G}\GL_h$ is trivial:   $\GL_h\times_AA_n\cong Q_n\times_{G}\GL_h$.)
 By essential surjectivity for $\GL_h$, there is a $\GL_h$-torsor $P$ over $A$  with (compatible)
 isomorphisms $P\times_AA_n\cong Q_n\times_{G}\GL_h$.
  As in the proof of Proposition \ref{torsor02}, the fpqc quotient $P/G$ is representable by an affine $A$-scheme $Z$
and $P\to Z$ is a $G$-torsor.
The $Q_n$'s give rise to a (compatible) sequence of elements of $Z(A_n)$, which yields a point $Z(A)$; this gives the desired $G$-torsor over $A$.
\end{proof}

\begin{remark}
\label{torsor04}
i) Lemma \ref{torsor03} remains true if $G$ is replaced by $L^+G$
provided that, in addition, all $A_n$ are in ${\rm ANilp}_{W(k_0)}$. Indeed, we see that the proof 
of part (a) goes through after
the observation 
that all $L^+\GL_h$-torsors over   $A_n$ are trivial, given by
free   $W(A_n)$-modules, of rank $h$.
This is a corollary of Zink's Witt descent (also obtained by combining Proposition \ref{torsor02} with 
\cite[Prop. 35]{Zinkdisplay}). For part (b), we can again apply Proposition \ref{torsor02} and observe 
that we have $W(A)\cong \varprojlim_n W(A_n)$ with $W(\fraka_1/\fraka_n)$ nilpotent in $W(A/\fraka_n)$.

ii) 
As a corollary of the above, it also follows that the equivalence of Proposition \ref{torsor02} (b) extends to the case that $R$ is a $p$-adically
complete and separated $W(k_0)$-algebra.
\end{remark}

\section{On  certain nilradicals}

Suppose that $R$ is a (commutative) $\f_p$-algebra. Denote by $F_R: R\to R$ the Frobenius
$F_R(x)=x^p$ so that $\ker(F_R^n)=\{x\in R\ |\ x^{p^n}=0\}$. 

\begin{definition} 
\label{GNN06}
\begin{itemize}
\item[a)] 
We will say that a $\f_p$-algebra $R$ 
is {\sl Frobenius separated  (Fs)} when,  for all $n\geq 1$, $R$ is 
$\ker(F_R^n)$-separated, i.e. when for all $n\geq 1$, we have
\begin{equation}
\label{GNN07}
\bigcap_m\ker(F^n_R)^m=0.
\end{equation}
\item[b)] 
We will say that a $\z_{(p)}$-algebra $R$ is 
{\sl almost Frobenius separated (aFs)} when there exists a nilpotent ideal 
$\fraka$ such that $p\in\fraka$ and $R/\fraka$ is Frobenius separated.
\end{itemize} 
\end{definition}

\begin{lemma}\label{propFs}
We have:
\begin{itemize}
\item[1)] Noetherian rings with $pR=0$ (resp. $p\in\sqrt{0_R}$) are (almost) Fs.  

\item[2)] If $R\subset R'$ and $R'$ is (almost) Fs, then so is $R$.

\item[3)] If $R_i$, $i\in I$, are Fs then  the product 
$\prod_{i\in I}R_i$ is Fs.
\end{itemize} 
\end{lemma}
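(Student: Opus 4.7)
The plan is to verify each part directly from Definition \ref{GNN06}, using the elementary observation that $\ker(F_R^n) = \{x\in R : x^{p^n}=0\}$ is always contained in $\sqrt{0_R}$, and that Frobenius is compatible with any ring homomorphism.

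For part (1), suppose $R$ is Noetherian with $pR=0$. Then $\sqrt{0_R}$ is a finitely generated ideal all of whose generators are nilpotent, hence $\sqrt{0_R}^N=0$ for some $N$. Since $\ker(F_R^n)\subset\sqrt{0_R}$, we get $\ker(F_R^n)^N=0$, so $\bigcap_m\ker(F_R^n)^m=0$ and $R$ is Fs. If instead $R$ is Noetherian with $p\in\sqrt{0_R}$, then $\fraka:=\sqrt{0_R}$ is nilpotent, contains $p$, and $R/\fraka$ is reduced with $p(R/\fraka)=0$. A reduced ring has $\ker(F^n)=0$ trivially, so $R/\fraka$ is Fs and $R$ is almost Fs.

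For part (2), the inclusion $R\hookrightarrow R'$ commutes with Frobenius, so $\ker(F_R^n)=R\cap\ker(F_{R'}^n)\subset\ker(F_{R'}^n)$ as subsets of $R'$. Consequently $\ker(F_R^n)^m\subset\ker(F_{R'}^n)^m$ for every $m$, and taking intersections shows $R$ is Fs whenever $R'$ is. For the almost Fs statement, if $\fraka'\subset R'$ is nilpotent, contains $p$, and $R'/\fraka'$ is Fs, then $\fraka:=R\cap\fraka'$ is nilpotent and contains $p$, while $R/\fraka\hookrightarrow R'/\fraka'$ is an injection, so by the case just handled $R/\fraka$ is Fs.

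For part (3), set $R=\prod_i R_i$. Since $(a_i)^{p^n}=(a_i^{p^n})$, we have $\ker(F_R^n)=\prod_i\ker(F_{R_i}^n)$. The key (and only slightly subtle) step is the componentwise inclusion $\ker(F_R^n)^m\subset\prod_i\ker(F_{R_i}^n)^m$: any element of the left-hand side is a \emph{finite} sum of $m$-fold products of elements of $\prod_i\ker(F_{R_i}^n)$, and reading off the $i$-th coordinate produces a finite sum of $m$-fold products in $\ker(F_{R_i}^n)^m$. Taking the intersection over $m$ and using $\bigcap_m\prod_i J_i^m=\prod_i\bigcap_m J_i^m$ yields
\[
\bigcap_m\ker(F_R^n)^m\ \subset\ \prod_i\bigcap_m\ker(F_{R_i}^n)^m\ =\ 0,
\]
since each $R_i$ is Fs. The main point to watch is precisely this inclusion, because powers of a product ideal are in general strictly smaller than the product of the powers, but the containment we need is the easy direction and suffices.
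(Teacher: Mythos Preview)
Your proof is correct and follows the same approach the paper intends: the paper's own proof reads simply ``These properties are easy consequences of the definition,'' and your argument is a careful unpacking of exactly those easy consequences. Your remark in part (3) about only needing the easy containment $(\prod_i J_i)^m\subset\prod_i J_i^m$ is a nice clarification.
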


\begin{proof}
These properties are easy consequences of the definition.
\end{proof}

The following is a slight generalization of \cite[lemma 5.1]{E7}:

\begin{lemma}
\label{GNN04}
Let $k$ be a field of characteristic $p$, and let $B$ 
and $B'$ be reduced $k$-algebras. The tensor product 
$R=B\otimes_kB'$ is Frobenius separated.
\end{lemma}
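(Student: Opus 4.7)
My plan is to reduce to the Noetherian case, which is covered by Lemma~\ref{propFs}(1). Write $B = \varinjlim_\alpha B_\alpha$ and $B' = \varinjlim_\beta B'_\beta$ as filtered colimits of the finitely generated reduced $k$-subalgebras (the $B_\alpha,B'_\beta$ being reduced as subrings of reduced rings). Then $R = \varinjlim_{(\alpha,\beta)} R_{\alpha,\beta}$ with $R_{\alpha,\beta} := B_\alpha\otimes_k B'_\beta$ a finitely generated, hence Noetherian, $k$-algebra. Each $R_{\alpha,\beta}$ is Frobenius separated by Lemma~\ref{propFs}(1): in a Noetherian ring the nilradical is nilpotent, and $I_n\subset\sqrt{0}$. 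Moreover the maps $R_{\alpha,\beta}\hookrightarrow R$ are injective: choosing $k$-linear splittings of the vector space inclusions $B_\alpha\hookrightarrow B$ and $B'_\beta\hookrightarrow B'$ produces a $k$-linear retraction $R\to R_{\alpha,\beta}$.

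Next I would check the compatibility of the colimit with the ideals in question: for each $n,m\geq 1$,
\[
I_n(R)^m \;=\; \bigcup_{(\alpha,\beta)} I_n(R_{\alpha,\beta})^m.
\]
The inclusion $\supset$ is obvious. For $\subset$, any element of $I_n(R)^m$ has a finite expression $x=\sum_j\prod_{l=1}^m a_{j,l}$ with $a_{j,l}^{p^n}=0$ in $R$; by directedness of the filtered system, all finitely many $a_{j,l}$ lie in some common $R_{\alpha,\beta}$, and the relations $a_{j,l}^{p^n}=0$ hold there too by injectivity. Consequently, if $x \in \bigcap_m I_n(R)^m$ and we fix $(\alpha_0,\beta_0)$ with $x\in R_{\alpha_0,\beta_0}$, then for each $m$ we can find $(\alpha_m,\beta_m)\geq(\alpha_0,\beta_0)$ such that $x\in I_n(R_{\alpha_m,\beta_m})^m$.

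The main obstacle, which I expect to be the delicate step, is that the Noetherian subring $R_{\alpha_m,\beta_m}$ and the nilpotency index of its $I_n$ may both grow with $m$, so one cannot simply apply Frobenius separatedness of a single fixed Noetherian subring. To overcome this I would use the structure of $R$ as a free $B_{\alpha}$-module on a $k$-basis of $B'_{\beta}$ (and symmetrically) to show that the supports in $B$ and $B'$ of the elements appearing in the decompositions of $x$ can, after bookkeeping, be confined to a single pair $(B_{\alpha^\star},B'_{\beta^\star})$ depending only on $x$: intuitively, the $k$-linear retractions $R\to R_{\alpha^\star,\beta^\star}$ constructed above, while not ring homomorphisms, together with the fact that $x$ has bounded support, allow an amalgamation of the witnesses $a_{j,l}^{(m)}$ for varying $m$ into a single Noetherian $R_{\alpha^\star,\beta^\star}$ containing $x$. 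Once this is achieved, $R_{\alpha^\star,\beta^\star}$ is Frobenius separated by Lemma~\ref{propFs}(1), giving $x=0$ and completing the proof.
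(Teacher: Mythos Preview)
Your reductions in steps 1--5 are correct: the colimit description, the injectivity of $R_{\alpha,\beta}\hookrightarrow R$, and the identity $I_n(R)^m=\bigcup_{(\alpha,\beta)}I_n(R_{\alpha,\beta})^m$ all hold. The difficulty is exactly where you flag it, and the gap in step 6 is genuine. Your $k$-linear retractions $\pi:R\to R_{\alpha^\star,\beta^\star}$ are \emph{not} ring homomorphisms, so $\pi$ need not carry $I_n(R)^m$ into $I_n(R_{\alpha^\star,\beta^\star})^m$: from $x=\sum_j\prod_l a_{j,l}^{(m)}$ you cannot conclude $x=\pi(x)=\sum_j\prod_l\pi(a_{j,l}^{(m)})$, nor that $\pi(a_{j,l}^{(m)})\in I_n$. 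The phrase ``bounded support of $x$'' does not help, because the witnesses $a_{j,l}^{(m)}$ may have supports that genuinely grow with $m$; there is no mechanism in your outline that forces them back into a fixed Noetherian subring. (Nor can one argue via flatness that $I_n(R)^m\cap R_{\alpha,\beta}=I_n(R_{\alpha,\beta})^m$: already for $R_{\alpha,\beta}=k$ the left side can be nonzero while the right side vanishes.)

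The paper's proof meets this obstacle head-on by first reducing (via embeddings into products of fields and Lemma~\ref{propFs}) to the special case where $B=B'$ is an algebraically closed field. In that case one can write $B=S^{1/p^\infty}$ for a separable extension $S/k$ with $p$-basis $\{x_i\}_{i\in I}$, and $R$ then admits an explicit presentation over the reduced ring $T=B\otimes_kS$ with generators $r_{i,e}$ and simple relations. The key point is that for each finite $I_0\subset I$ there is an honest \emph{ring} endomorphism $\theta_{I_0}:R\to R$ sending $r_{i,e}\mapsto r_{i,e}$ for $i\in I_0$ and $r_{i,e}\mapsto 0$ otherwise. Because $\theta_{I_0}$ is multiplicative, it does preserve $I_n(R)^m$, and a short combinatorial count shows $\theta_{I_0}(I_n(R)^m)=0$ once $m>(p^n-1)\#I_0$. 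Choosing $I_0$ large enough that $\theta_{I_0}(x)=x$ then gives $x=0$. In other words, the paper succeeds precisely because it finds genuine ring retractions onto ``finite'' pieces, which your $k$-linear splittings cannot supply.
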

\begin{proof}
% We remark that $\sqrt{0_R}=\bigcup_n\ker(F_R^n)$, and so \eqref{GNN07} is implied by $(0_R)=\cap_{m}(\sqrt{0_R})^m$. 
Notice that we can write $B\otimes_kB'\hookrightarrow \prod_i L_i\otimes_k \prod_j L'_j$ where 
$L_i$, $L'_j$ are field extensions of $k$; then $B\otimes_kB'\subset (\prod_i L_i)\otimes_k (\prod_j L'_j)\hookrightarrow 
\prod_{i, j} L_i\otimes_k L'_j$.
Hence,   as Frobenius separatedness is
inherited by products and subrings (Lemma \ref{propFs}), without loss of generality, we can 
assume that $B$ and $B'$ are algebraically closed fields.
We may clearly also assume $B=B'$. Zorn's lemma allows us to pick a maximal
separable subextension $k\subset S\subset B$, so that $B=S^{1/p^\infty}$. 
Notice that $T:=B\otimes_kS$ is a reduced ring. Let $\{x_i|i\in I\}$ be a $p$-basis 
of $S$ (i.e. a subset such that every element of $S$ has a unique representation as 
a sum $x=\sum_{\underline n}a_{\underline n}^p{\underline x}^{\underline n}$ where 
$\underline n=( n_i )_{i\in I}$ runs through the set of 
multiindices with $p-1\geq n_i\geq0$ and $n_i=0$ for almost all $i$). 
It is easy to see that $B$ is the quotient of the polynomial 
algebra $S[\{b_{i,e}|i\in I\,,e\geq  1\}]$ by the ideal which is generated by 
$b_{i,e+1}^p-b_{i,e}$ and $b_{i,1}^p-x_i$ for $i\in I$, $e\geq 1$. Now consider 
$$
r_{i,e}:=b_{i,e}\otimes1-1\otimes b_{i,e}\in R.
$$ 
It follows that $R$ is the quotient of the polynomial algebra 
$$
T[\{r_{i,e}|i\in I\,,e\geq 1\}]
$$ 
by the ideal which is generated by $r_{i,e+1}^p-r_{i,e}$ 
and $r_{i,1}^p$, for $i\in I$, $e\geq 1$.  We can deduce that, for each subset 
$I_0\subset I$, there exists a ring endomorphism
$\theta_{I_0}:R\rightarrow R$, defined by $\theta_{I_0}(r_{i,e})=r_{i,e}$ if $i\in I_0$,
$\theta_{I_0}(r_{i,e})=0$, if $i\notin I_0$.
Notice that each 
$\theta_{I_0}$ preserves $\fraka_n:=\ker(F_R^n)$, and that  
$\theta_{I_0}=\theta_{I_0}\circ\theta_{I_0}$.

Now we can see that $\fraka_n$ is generated by the elements $r_{i,n}$, so that 
$\fraka^\nu_n$ is generated by the set $\{\prod_ir_{i,n}^{\nu_i}|\sum_i\nu_i=\nu\}$. 
The only multiindices which give rise to non-zero products are bounded by 
$\nu_i\leq  p^n-1 $, and these are seen to involve factors indexed by at least 
$ \nu/{(p^n-1)}$ many elements of $I$. Consequently one has 
$\theta_{I_0}(\fraka^\nu_n)=0$ provided that $\#(I_0)<\nu/(p^n-1)$. 
Now consider some $x\in\bigcap_\nu\fraka^\nu_n$, and choose a large 
finite set $I_0$ with $\theta_{I_0}(x)=x$. We deduce 
$x\in\theta_{I_0}(\fraka^{1+(p^n-1)\#(I_0)})=0$.
\end{proof}

\begin{lemma}\label{FsBNI}
Suppose that $A$ is an almost Frobenius separated $\z_{(p)}$-algebra. Let 
$\fraka\subset A$ be an idempotent ideal, i.e. $\fraka=\fraka^2$, which is in addition bounded nilpotent, i.e. there exists $m\geq 1$ such that for all 
$x\in \fraka$, $x^m=0$. Then $\fraka$ is the zero ideal.
\end{lemma}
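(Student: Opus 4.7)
The plan is to reduce to the Frobenius separated case and then combine the bounded nilpotence of $\fraka$ with its idempotence.

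First, by definition of almost Frobenius separated, choose a nilpotent ideal $\mathfrak b\subset A$ with $p\in\mathfrak b$ and $A/\mathfrak b$ Frobenius separated; say $\mathfrak b^N=0$. Let $\bar\fraka\subset A/\mathfrak b$ denote the image of $\fraka$. Since $\fraka=\fraka^2$, iteration yields $\fraka=\fraka^k$ for every $k\ge 1$, hence also $\bar\fraka=\bar\fraka^k$; moreover $\bar\fraka$ inherits the bounded nilpotence bound $m$. Assume for the moment that we have shown $\bar\fraka=0$, i.e.\ $\fraka\subset\mathfrak b$. Then $\fraka=\fraka^N\subset\mathfrak b^N=0$, so $\fraka=0$. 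Thus we may replace $A$ by $A/\mathfrak b$ and assume $A$ is an $\F_p$-algebra which is Frobenius separated.

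Next, choose an integer $n\ge 1$ with $p^n\ge m$. For any $x\in\fraka$ we then have $x^{p^n}=0$, so $\fraka\subset\ker(F_A^n)$. Combining this with $\fraka=\fraka^k$ for all $k$, we obtain
\[
\fraka \;=\; \fraka^k \;\subset\; \ker(F_A^n)^k
\]
for every $k\ge 1$. Taking the intersection over $k$ and using the Frobenius separatedness hypothesis \eqref{GNN07} gives
\[
\fraka \;\subset\; \bigcap_{k\ge 1}\ker(F_A^n)^k \;=\; 0,
\]
which completes the proof.

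There is really no hard step here; the argument is essentially a one-line combination of the two structural features of $\fraka$ (idempotence and bounded nilpotence) with the two ingredients entering the definition of almost Frobenius separatedness (nilpotence of $\mathfrak b$ and Frobenius separatedness of $A/\mathfrak b$). The only point to watch is the initial reduction modulo $\mathfrak b$, where one must remember to use $\fraka=\fraka^N$ rather than just $\fraka\subset\mathfrak b$ to conclude $\fraka=0$.
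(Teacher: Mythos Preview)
Your proof is correct and follows essentially the same strategy as the paper's. The only difference is in the reduction step: the paper reduces modulo $p$ (using that $p$ is nilpotent in $A$, since $p\in\mathfrak b$ and $\mathfrak b$ is nilpotent) and then works in the $\F_p$-algebra $A/pA$, which is still only almost Frobenius separated, so the paper must implicitly use that $\bigcap_m\ker(F^n)^m$ is nilpotent rather than zero; you instead reduce directly modulo the nilpotent ideal $\mathfrak b$ from the definition, landing in a genuinely Frobenius separated $\F_p$-algebra where $\bigcap_m\ker(F^n)^m=0$, which makes the final step marginally cleaner.
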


\begin{proof}
Consider the image $\bar\fraka$ of $\fraka$ in $ A/pA$. It is enough to show $\bar\fraka=0$; 
then $\fraka\subset pA$ and so $\fraka=\fraka^2\subset p^2A$  which gives $\fraka\subset p^mA=0$ for $m>>0$.
Therefore, it is enough to show the result when $pA=0$. Our assumption implies that there is $n\geq 1$ such that
$\fraka=\fraka^2\subset \ker(F^n_A)$. The result  follows since then $\fraka=\fraka^m\subset  \ker(F_A^n)^m$
which gives $\fraka\subset \cap_m (\ker(F_A^n))^m$. Therefore $\fraka$ is nilpotent and $\fraka=\fraka^2$ gives $\fraka=0$. 
\end{proof}

\begin{remark}
 In fact, the proof only uses that $\cap_m \ker(F_{A/pA})^m$ is nilpotent. Indeed, suppose that $pA=0$ and that
we have $\fraka=\fraka^2\subset \ker(F^n_A)$ with $n\geq 1$.  We can induct on $n$.   
Consider $\fraka'=\{\sum_ix_iy_i^p|x_i\in A,\,y_i\in\fraka\}$. This 
satisfies $\fraka'=\fraka'^2\subset   \ker(F_A^{n-1})$
and by induction $\fraka'=0$. This gives $\fraka\subset \ker(F_A)$ and we can conclude the proof.
\end{remark}

\begin{corollary}
\label{GNN05}
Let $R$ be a Noetherian ring.
If $p$ is nilpotent in $R$ and if $B$ is a flat $R$-algebra for which $\sqrt{0_B}$ is nilpotent, 
then $B\otimes_RB$ is almost Frobenius separated (aFs).
\end{corollary}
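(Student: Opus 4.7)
The plan is to reduce, via a nilpotent quotient, to applying Lemma~\ref{GNN04} factor-by-factor over a finite product of residue fields. First I would pass from $R$ to $\bar R := R/\sqrt{0_R}$ and from $B$ to $\bar B := B\otimes_R\bar R$. Because $R$ is Noetherian, $\sqrt{0_R}$ is nilpotent, and since $p$ is nilpotent it lies in $\sqrt{0_R}$; thus $\sqrt{0_R}\cdot(B\otimes_RB)$ is a nilpotent ideal of $B\otimes_RB$ containing $p$, and the quotient is $\bar B\otimes_{\bar R}\bar B$. Note that $\bar R$ is a reduced Noetherian $\f_p$-algebra, $\bar B$ is $\bar R$-flat, and $\sqrt{0_{\bar B}}$ is nilpotent: it coincides with the image of $\sqrt{0_B}$, since the further quotient $B/\sqrt{0_B}$ is reduced. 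It therefore suffices to show that $\bar B\otimes_{\bar R}\bar B$ is almost Frobenius separated.

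Next I would embed $\bar R$ into the finite product of its residue fields at minimal primes. Since $\bar R$ is reduced Noetherian, it has only finitely many minimal primes $\mathfrak{p}_1,\ldots,\mathfrak{p}_n$, and each localization $K_i=\bar R_{\mathfrak{p}_i}=k(\mathfrak{p}_i)$ is a field, producing an injection $\bar R\hookrightarrow K:=\prod_iK_i$. Using that $\bar B\otimes_{\bar R}\bar B$ is $\bar R$-flat (as $\bar B$ is), tensoring this embedding with $\bar B\otimes_{\bar R}\bar B$ over $\bar R$ preserves injectivity and, after collapsing the off-diagonal terms (the idempotents in $K$ are orthogonal), yields
\[
\bar B\otimes_{\bar R}\bar B \ \hookrightarrow\ \prod_{i=1}^n B_i\otimes_{K_i}B_i, \qquad B_i:=\bar B\otimes_{\bar R}K_i.
\]

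The main technical step will be analyzing each factor $B_i\otimes_{K_i}B_i$. Letting $J_i$ be the image of $\sqrt{0_{\bar B}}$ in $B_i$, it inherits the nilpotency bound of $\sqrt{0_{\bar B}}$; and the quotient $B_i/J_i=B_{\rm red}\otimes_{\bar R}K_i=(B_{\rm red})_{\mathfrak{p}_i}$ is a localization of the reduced ring $B_{\rm red}=B/\sqrt{0_B}$, hence reduced. Consequently $\sqrt{0_{B_i}}=J_i$ is nilpotent, and the kernel of $B_i\otimes_{K_i}B_i\twoheadrightarrow C_i\otimes_{K_i}C_i$ (with $C_i:=B_i/J_i$) is the ideal generated by $J_i\otimes1$ and $1\otimes J_i$, which is nilpotent by a straightforward binomial argument. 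Lemma~\ref{GNN04} applied to the reduced $K_i$-algebra $C_i$ then shows that $C_i\otimes_{K_i}C_i$ is Frobenius separated, so each $B_i\otimes_{K_i}B_i$ is almost Frobenius separated (note $p=0$ in $K_i$).

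To conclude, one checks directly from the definition, together with Lemma~\ref{propFs}, that almost Frobenius separatedness passes to finite products (use the product of the nilpotent ideals, which is nilpotent since there are finitely many factors) and to subrings (intersect the ideal with the subring, and use that subrings of Fs are Fs). Combining these observations gives that $\bar B\otimes_{\bar R}\bar B$, and hence $B\otimes_RB$, is almost Frobenius separated. The subtle point requiring care is the identification $\sqrt{0_{B_i}}=J_i$: nilpotency of $\sqrt{0_{B_i}}$ is not automatic from nilpotency of $\sqrt{0_{\bar B}}$, but holds here precisely because $K_i$ is the residue field at a \emph{minimal} prime of the reduced ring $\bar R$, forcing $B_i/J_i=(B_{\rm red})_{\mathfrak{p}_i}$ to be a localization of a reduced ring and therefore reduced.
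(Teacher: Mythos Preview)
Your proof is correct and follows essentially the same route as the paper: both reduce to $R$ reduced, use flatness of $B$ over $R$ to embed $B\otimes_R B$ into a tensor product over $Q(R)=\prod_i K_i$ (the residue fields at the minimal primes), and then invoke Lemma~\ref{GNN04} over each field $K_i$. Your version is marginally cleaner in that you stop at the reduced localizations $C_i=(B_{\rm red})_{\mathfrak p_i}$, whereas the paper passes one step further to $Q(B_{\rm red})$.
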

\begin{proof} Recall that we denote by $Q(A)$ the  total quotient ring  of $A$, i.e. $Q(A)$ is the localization
 $N^{-1}A$ on the set of non-zero divisors $N\subset A$.
Lemma \ref{GNN04} implies that $Q(B_{\rm red})\otimes_{Q(R_{\rm red})}Q(B_{\rm red})$ is almost Frobenius separated, 
and we claim that the kernel of the canonical homomorphism
$$
B\otimes_RB\rightarrow Q(B_{\rm red})\otimes_{Q(R_{\rm red})}Q(B_{\rm red})
$$
is nilpotent. We first argue that we can quickly reduce to the case that $R$ is reduced.
Indeed, since $R$ is Noetherian, $\sqrt{0_R}$ is a nilpotent ideal we can 
replace $R$ and $B$ by $R_{\rm red}$ and $B/\sqrt{0_R}B$. Now assuming that $R=R_{\rm red}$, let us 
write $S$ for the set of non-zero-divisors of $R$. Observe that since $B$ is $R$-flat
the images of the elements of $S$ in $B$ are also non-zero-divisors of $B$, yielding natural injections 
$B\hookrightarrow S^{-1}B$ and also $B\otimes_RB\hookrightarrow S^{-1}B\otimes_RS^{-1}B$.
However, since the kernels of both maps
$$
S^{-1}B\otimes_{Q(R)}Q(B_{\rm red})\rightarrow Q(B_{\rm red})\otimes_{Q(R)}Q(B_{\rm red}),
$$
and
$$
S^{-1}B\otimes_RS^{-1}B\rightarrow S^{-1}B\otimes_RQ(B_{\rm red})
$$
are nilpotent, the statement easily follows.
\end{proof}

In particular   Corollary \ref{GNN05} applies when 

\begin{itemize}
\item
$B=\prod_\frakm R_\frakm$, where $\frakm$ 
runs through the set of maximal ideals of the Noetherian ring $R$ and $R_\frakm$ stands for the localization at $\frakm$, and
\item
$R$ is a local Noetherian ring and $B=\hat R$ is its   
completion.
\end{itemize}

(In the first situation, the flatness of $B=\prod_\frakm R_\frakm$ over the Noetherian ring $R$ follows from \cite[Theorem 2.1]{Chase}.)

\end{appendix}

\providecommand{\bysame}{\leavevmode\hbox to3em{\hrulefill}\thinspace}
\providecommand{\MR}{\relax\ifhmode\unskip\space\fi MR }
% \MRhref is called by the amsart/book/proc definition of \MR.
\providecommand{\MRhref}[2]{%
  \href{http://www.ams.org/mathscinet-getitem?mr=#1}{#2}
}
\providecommand{\href}[2]{#2}

\end{document}